\documentclass{amsart}
\usepackage{amsthm}
\usepackage{amsmath}
\usepackage{amsfonts}
\usepackage{graphicx}
\usepackage{array}
\usepackage{xcolor}

\usepackage[
backend = biber,
style=alphabetic,
maxbibnames=99,
]{biblatex}
\usepackage{hyperref}
\usepackage[all]{hypcap}

\newtheorem{theorem}{Theorem}[section]
\newtheorem{lemma}[theorem]{Lemma}
\newtheorem{definition}[theorem]{Definition}
\newtheorem{proposition}[theorem]{Proposition}
\newtheorem{corollary}[theorem]{Corollary}
\newtheorem*{repeatThmSatelliteDiagram}{Theorem \begin{NoHyper}\ref{thm:satelliteDiagram}\end{NoHyper}}
\newtheorem*{repeatThmJSJBound}{Theorem \begin{NoHyper}\ref{thm:JSJbound}\end{NoHyper}}
\newtheorem*{repeatThmBound}{Theorem \begin{NoHyper}\ref{thm:bound}\end{NoHyper}}

\title{The Crossing Number and Arc Index of Satellite Knots}
\author{Sam Ketchell}

\begin{document}

\begin{abstract}
    By considering arc presentations and grid diagrams of satellite knots, we prove that if $K$ is a satellite knot with companion $C$ and wrapping number $w$, then its crossing number satisfies $c(K) \geq \frac{2}{5}(w^2c(C))^{1/11}$. We also prove other similar bounds regarding the crossing number and arc index of satellite knots and links. This is partial progress on Problem 1.2 of the K3 Problem List.
\end{abstract}

\maketitle

\tableofcontents

\pagebreak

\section{Introduction}

An old and well-known open problem is how the crossing number of knots behaves under satellite operations. Let a knot $K$ be a satellite of a companion knot $C$ with pattern $P$, by which we mean $P$ is a knot contained in a solid torus $V$ which is not a core curve of $V$ and is not contained in any ball within $V$, and we embed $V$ into $S^3$ knotted like $C$, so that $P$ is sent to $K$. By the \emph{wrapping number} $w$ we mean the minimum number of intersections between $P$ and any meridian disc of $V$. Given diagrams of $C$ and $P$, one can easily draw a diagram of $K$. This gives an upper bound on the crossing number $c(K)$ in terms of $C$ and $P$, but lower bounds are very hard to come by. One feature of this diagram is that each crossing of our diagram for $C$ becomes a cluster of at least $w^2$ crossings in the new diagram, since we have at least $w$ strands going one way passing over at least $w$ going the other way.

For this reason one might believe that $c(K)$ must always be at least $w^2c(C)$, or less optimistically at least $c(C)$. These conjectures appear as Problem 1.2 in the K3 problem list \cite{K3ProblemList}.

The stronger conjecture has been shown to be true in some special cases. In \cite{AdequateWhitehead}, it is established for untwisted whitehead doubles of knots that have zero-writhe adequate diagrams. In \cite{AdequateCable}, it is established for cables of adequate knots. Finally, in \cite{TwistFamilies} it is established asymptotically when one considers an infinite family of satellites produced by repeatedly performing full twists.

Freedman and He proved a general bound in \cite{genusBound}, showing that for any satellite knot, $c(K)$ is at least the genus of $C$ multiplied by the square of the winding number (the number of times $P$ winds around the torus algebraically, which is at most $w$ but could be much smaller or even zero).

For general satellite knots, the best known bound in terms of $c(C)$ comes from \cite{satelliteBound} in which Lackenby proves that $c(K) \geq 10^{-13}c(C)$. Though the coefficient is quite small, this bound grows linearly in $c(C)$, which is the best growth rate we could possibly hope for.

However, one might hope for a bound that incorporates not just $c(C)$ but also the wrapping number. In this paper we prove such a lower bound on the crossing number of a general satellite knot, and a similar bound on its arc index.

\begin{theorem}
    \label{thm:bound}
    If $K$ is a satellite knot with companion knot $C$ and wrapping number $w$, then
    \[
        c(K) \geq \frac{2}{5} (w^2c(C))^{1/11}
    \]
    and
    \[
        \alpha(K) \geq \frac{4}{5} (w \alpha(C))^{1/7}.
    \]
\end{theorem}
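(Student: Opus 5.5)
The plan is to work entirely with arc presentations and grid diagrams, and to prove the arc index bound first. The crossing number bound then follows from the standard inequality $\alpha(K) \leq c(K)+2$ (Bae--Park) together with the bound $c(C) \leq \alpha(C)^2$ type relations. More precisely, we use that the number of crossings in a grid diagram of grid number $n$ is at most roughly $n^2/4$, so $c(C) \leq \frac{1}{4}\alpha(C)^2$ up to lower order terms. Substituting $\alpha(K)\le c(K)+2$ into $\alpha(K)^7 \gtrsim w\,\alpha(C)$ and $\alpha(C)\gtrsim (c(C))^{1/2}$ gives a bound of the shape $c(K) \gtrsim (w^2 c(C))^{1/14}$. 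To reach the exponent $1/11$ I will need a sharper intermediate estimate in the crossing-number direction: a version of the main geometric construction that produces a diagram of $C$ whose crossing number (rather than grid number) is polynomially controlled by $c(K)$. The constants $2/5$ and $4/5$ are to be tuned at the end.

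The core step is the following. Take a minimal arc presentation of $K$ with $n=\alpha(K)$ arcs, viewed as a grid diagram, so that $K$ sits in an open book whose pages are half-planes, one arc per page. The companion torus $T=\partial V$ is an incompressible torus in the complement of $K$. I will isotope $T$ into a normal form relative to the open book (following Cromwell's and Dynnikov's techniques for arc presentations):
\begin{itemize}
\item $T$ meets the binding transversely in finitely many points;
\item $T$ meets each page in arcs and circles, with only finitely many saddle tangencies.
\end{itemize}
Because $T$ is incompressible and $K$ is not split from it, one can eliminate inessential circles and bound the complexity of this foliated torus in terms of $n$. I would aim to show that the number of binding intersections and saddles is at most polynomial in $n$, say $O(n^k)$, by an Euler characteristic count ($\chi(T)=0$ forces vertices and saddles to balance) combined with the requirement that each region between consecutive arcs of $K$ constrains the torus.

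From this normal form I read off two things. First, a core curve of $V$, which represents $C$, can be pushed onto the foliated torus and then realised as an arc presentation whose number of arcs is bounded by the number of binding points and saddles of $T$. This gives $\alpha(C) \leq O(n^a)$. Second, a meridian disc of $V$ can be built from the foliation, and its intersection with $K$ counted page by page, which gives $w \leq O(n^b)$. Combining these yields $w\,\alpha(C) \le C' n^{7}$, which is the arc index statement.

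The main obstacle is the normal-form step. Concretely, one must show that an essential torus in the complement of a grid-diagram knot can be isotoped to have polynomially bounded complexity, with controlled exponents. I would first try Dynnikov-style monotonic simplification together with Bennequin-type foliation moves (exchange moves, removing bigons), and track how many binding points survive. Getting the exact exponents $7$ and $11$ will require careful bookkeeping of how the saddles distribute over the $n$ pages and $n$ binding points, and that bookkeeping is where I expect most of the work to lie.
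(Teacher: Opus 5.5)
Your proposal has a genuine gap at exactly the step you flag as the main obstacle, and the tools you suggest cannot close it. The Euler characteristic count only gives $\sum_k (4-k)V_k=0$. Once low-valence vertices are simplified away, this says every vertex is $4$-valent and vertices balance saddles, but nothing bounds how many there are. Dynnikov-type moves only lower binding weight; they give no a priori bound, and Kazantsev's example shows they can stall short of a nice position. Moreover, you try to bound the binding weight of the given companion torus $T$, which need not be JSJ. The paper's bound uses the JSJ hypothesis essentially, and it notes that no such bound can hold for arbitrary satellite tori: bounded binding weight leaves only finitely many isotopy classes, while some exteriors contain infinitely many satellite tori.

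What the paper actually does for a JSJ collection is this. It makes $L$ simplicial in the join triangulation of $S^3$ by $\alpha^2$ tetrahedra and takes a least-weight normal, then PL-admissible, representative, so there are no poles and separatrices are diagonals of normal squares. It uses maximal generalised parallelity bundles on both sides of each $T_i$, whose horizontal boundaries are incompressible because no annular simplifications exist. The JSJ property rules out essential annuli of equal slope on both sides, which yields two filling curves on $T_i$, each crossing at most $s_i$ separatrices, with $\sum_i s_i\le 2\alpha^2$. A rectilinear isoperimetric inequality then gives $w_\beta(T)\le 2\alpha^4$. In your proposal the exponents $a,b$ with $a+b=7$ are not derived from anything.

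The second missing ingredient is the reduction of a non-JSJ companion to JSJ ones. One shows $C=C_1\#\cdots\# C_n$, where each $C_i$ is the companion of a JSJ torus separated from $K$ by $T$, with $w_i\ge w$ and $n\le 24c(K)-1$. One then sums the JSJ bounds using subadditivity of $c$ and $\alpha$ under connected sum. This is also the source of your $1/14$. In the paper $11=10+1$: at the JSJ level $w_i^2c(C_i)\le\frac{1}{7}(2c(K)+2)^{10}$, and the number of summands is linear in $c(K)$. Likewise $7=5+2$, because that same count is converted via $c(K)\le\alpha(K)^2/4$. Squaring the final $\alpha^7$ bound squares this count and also trades $c(K)$ for $\alpha(K)^2$. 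So the fix is to pass to crossing number at the JSJ level and only then sum, not a sharper geometric construction.

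Your read-off step, by contrast, is sound and differs from the paper's route. Given $w_\beta(T)\le 2\alpha^4$ for a JSJ torus, a longitude in the vertex graph visiting each vertex once gives $\alpha(C)\le w_\beta$, as in the paper's last section. The meridian disc from Lemma~\ref{lem:twoPageMeridian} meets $K$ only at binding vertices, so $w\le\alpha$. Hence in the JSJ case $w\alpha(C)\le 2\alpha^5$ and $w^2c(C)\le\alpha^{10}$. These are the same exponents the paper obtains via narrow position, which it really needs only for the pattern terms of Theorem~\ref{thm:JSJbound}. The constants $\frac{2}{5}$ and $\frac{4}{5}$ would need rechecking on this route.
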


Though these new bounds are far from linear, they have the advantage of growing as $w$ does.

One could also conjecture that $c(K)$ should depend on $c(P)$ (suitably defined) in some way. Writing down a neat formula is difficult because of interactions between the framing of the solid torus and the writhe of the diagram of $C$, but loosely speaking we might expect $c(K)$ to be ``something like $w^2c(C)+c(P)$".

In this paper we also prove inequalities of this flavour (though again with nonlinear growth rates) for a large class of satellite knots and even a wide variety of satellite links.

\begin{theorem}
    \label{thm:JSJbound}
    If $L$ is a non-split link contained in a disjoint union of solid tori $V = \bigsqcup V_i$ such that every $T_i = \partial V_i$ is JSJ in the exterior of $L$ and no $V_i$ is unknotted, and $C$, $P$, and $w$ are the corresponding companion link, pattern link, and minimal wrapping number around any $V_i$, then
    \[
        w^2c(C)+c(P) \leq \frac{1}{7}(2c(L)+2)^{10}
    \]
    and
    \[
        \max(w\alpha(C),\alpha(P)) \leq \frac{1}{2}\alpha(L)^5 + 2 \alpha(L)
    \]
    where by $c(P)$, $\alpha(P)$ we mean the minimal crossing number or arc index of a (grid) diagram of $P$ drawn in a collection of disjoint annuli (we place no restriction on the framing with which $P$ is drawn).
\end{theorem}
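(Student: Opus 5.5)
We would prove the arc index inequality first, working entirely with grid diagrams, and then obtain the crossing number inequality from it. For the second step we use the standard relations between crossing number and arc index: $\alpha(L) \le c(L)+2$ (Bae--Park, and for non-split links), and conversely a grid diagram of size $n$ gives a diagram with at most $(n-1)^2/2$ crossings, roughly. Raising $\alpha(L)^5$ to this comparison gives a tenth power of $c(L)+2$, up to constants. That accounts for the shape $\frac{1}{7}(2c(L)+2)^{10}$, where the factor $2$ absorbs the rounding in these conversions.

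\textbf{Arc index step.} Fix a minimal grid diagram $G$ of $L$ of size $n=\alpha(L)$, viewed as an arc presentation with $n$ pages in an open book. The JSJ tori $T_i$ are essential in the exterior. We would isotope the $T_i$ into a normal position with respect to the open book: each torus meets the binding in finitely many points and each page in arcs and saddle tangencies. This is in the spirit of Birman--Menasco/Cromwell braid foliations, or Lackenby's normal-surface arguments in \cite{satelliteBound}. Using irreducibility and essentiality, which come from non-splitness, the JSJ hypothesis and the tori being knotted, we would remove inessential intersections. The result should be that the foliation on each $T_i$ has only essential arcs, with complexity bounded polynomially in $n$. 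From such a position we would read off a grid-like presentation of the core $C_i$. Concretely, the torus is tiled by a bounded number of rectangles, and a curve on it parallel to the core can be pushed to a polygonal curve with arc index at most about $n^2$ times a bounded factor.

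\textbf{Pattern and wrapping number.} Cutting along the $T_i$, the pattern $P$ sits in $V$, which we unroll into annuli. The arcs of $G$ inside $V_i$, together with the combinatorics of the torus foliation, give a grid diagram of $P$ in an annulus of size polynomial in $n$. For the factor $w$ we would use the key observation that each meridian disc meets $L$ at least $w$ times. Thus every "column" of the companion presentation forces $w$ vertical segments of $G$, so $w\,\alpha(C)$ is bounded by the number of segment–tile incidences, roughly $n^5/2$. Once the normal-position bounds are in hand, assembling the estimates should give $\max(w\alpha(C),\alpha(P)) \le \frac12 n^5+2n$. The $2n$ term accounts for boundary corrections when the annulus is closed up.

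\textbf{Main obstacle.} The hardest part is the normalisation of the JSJ tori relative to the grid/open book with a \emph{polynomial} complexity bound. We must rule out arbitrarily many parallel saddles and twisting of the torus around the binding, and keep enough control that a core curve and a meridian disc can be read off cheaply. We would first try Cromwell's exchange-move technique on the foliation together with an Euler characteristic count on the torus ($\chi=0$ forces balanced vertex/saddle counts). Failing that, we would fall back to a normal-surface style argument in a triangulation built from the grid, as Lackenby does.
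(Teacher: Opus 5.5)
\textbf{There is a genuine gap in the arc-index step.} Your plan to deduce the crossing inequality from the arc-index one is workable. For instance, use Bae--Park together with $c\le\alpha^2/2$ for grid diagrams of $C$ and of $P$ in annuli; the paper instead counts the at least $w^2c(C)+c(P)$ crossings directly in a single satellite diagram.

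The problem is that both $w\alpha(C)\le N$ and $\alpha(P)\le N$ require one grid diagram of $L$ with $N$ arcs that is a \emph{satellite diagram}. Its arcs must run in parallel bundles alongside the arcs of a grid diagram of $C$, with all corners near corners of that diagram. Only then does a meridian-disc argument give at least $w$ arcs shadowing each companion arc. Only then can illegal exchange moves collapse the companion to the unlink grid and exhibit $P$ in annuli.

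Nothing in your proposal produces such a diagram. Your assertion that every column of the companion presentation forces $w$ vertical segments of $G$ is unjustified: the columns of the original $G$ bear no relation to a companion presentation read off from the torus foliation. Normalising $T$ more cleverly cannot repair this. In Kazantsev's example (a cable of the trefoil with arc index $11$), $T$ has a fully simplified standard foliation, yet the grid diagram is not a satellite diagram and cannot be turned into one without raising the arc index.

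So a controlled increase of the arc index is unavoidable, and that is the missing idea. The paper finds a meridian of each $T_i$ lying in two pages. It shrinks the meridian disc until the meridian consists of two arcs, rerouting the problematic arcs of $L$ along chosen arcs in those pages through a small disc $B$. This costs at most $\frac{1}{4}\alpha w_\beta+\alpha$ in arc index. It then shows that Dynnikov's simplifications preserve this length-$2$ meridian. Finally, a standard foliation with repeated arcs forces every repeated arc to be a semicircle, so narrow position follows by reordering events.

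Your \emph{segment--tile incidences} and \emph{boundary corrections} are reverse-engineered from the target constants. In the paper, $\frac{1}{2}\alpha^5$ is $\frac{1}{4}\alpha\cdot w_\beta$ with $w_\beta\le 2\alpha^4$, and $2\alpha$ is the original $\alpha$ arcs plus $\alpha$ extra from rerouting.

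\textbf{The binding-weight bound also has no mechanism.} You flag the polynomial bound on the binding weight as the main obstacle, but the tools you name would not give it. Exchange moves and the Euler characteristic only force a valence-four, square-tiled foliation; they do not bound the number of tiles. Moreover, the JSJ hypothesis must enter essentially: a link exterior can contain infinitely many isotopy classes of satellite tori, so no such bound holds for arbitrary ones.

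The paper proves $w_\beta\le 2\alpha^4$ as follows.
\begin{enumerate}
\item It makes $L$ simplicial in the join triangulation of $S^3$ by $\alpha^2$ tetrahedra.
\item It normalises $T$ in $S^3\setminus L$ in PL-admissible form, so separatrices are diagonals of normal squares and there are no poles.
\item It takes a maximal generalised parallelity bundle on each side with incompressible horizontal boundary.
\item It uses JSJ-ness to rule out the horizontal boundaries on both sides of $T_i$ containing annuli of the same slope.
\item This yields two filling curves on $T_i$, each crossing at most $s_i$ separatrices, with $\sum s_i\le 2\alpha^2$.
\item A rectangular isoperimetric inequality then bounds the number of tiles of $T_i$ by $s_i^2$.
\end{enumerate}
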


In the knot case, the assumption that the satellite torus is JSJ is not an unreasonable one. A satellite knot is one which has a knotted JSJ torus in its exterior, so all satellite knots are susceptible to this theorem, though perhaps not with a specific companion-pattern pair, if the knot arises from multiple distinct satellite constructions. We shall see that the only companions to which the theorem may not apply are connected sums of companions to which it does. In particular the satellite torus will always be JSJ if the companion is prime.

In the link case, the restrictions imposed by the assumptions can be more significant.

The techniques we use to prove these theorems are those of open book foliations, in which we take an open book decomposition of $S^3$ and examine the singular foliation it induces on a surface (in our case a collection of satellite tori for a link). These techniques were first used by Bennequin in \cite{Bennequin1989} to prove the existence of non-standard contact structures on $\mathbb{R}^3$, and have been used productively since, for instance by Birman and Menasco in \cite{BirmanMenasco} to study satellite braids, by Ito in \cite{Ito2011} to establish a relationship between the genus of a closed braid and the complexity of the braid word, and by Dynnikov and Prasolov in \cite{bypasses} to prove various results regarding grid diagrams, Legendrian knots, and braids. LaFountain and Menasco have written a book on the subject \cite{BraidFoliationsBook}.

Most relevant to our argument is \cite{monotonicSimplification}, in which Dynnikov uses these techniques to establish that if one has a grid diagram of an unknot, a composite link, or a split link, it can be transformed into a trivial, composite, or split grid diagram respectively by a sequence of elementary moves which at no point raises the complexity of the diagram. An overview of the argument is that these types of link are characterised by the presence of a certain kind of surface in the exterior: a spanning disc, a composition sphere, or a splitting sphere. These surfaces are put into a so-called ``admissible" form with respect to an arc presentation of the link, and the foliation is examined.

It is explained how, if certain patterns are present in the foliation, the surface can be simplified without raising the arc index of the link. Once all of these simplifying moves have been exhausted, the surface is in a ``nice" position which forces the grid diagram of the link to be of the special kind desired.

It is tempting, therefore, to try applying this method to satellite knots, which are characterised by the presence of an essential torus in their exterior. We may hope that by examining the foliation of the torus and performing the simplifications, we could change any grid diagram of a satellite knot, without increasing the arc index, into a ``satellite diagram", by which we mean that the grid diagram lives in a small neighbourhood of a grid diagram of the companion knot, and all of its corners live in small neighbourhoods of the corners of this diagram of the companion (see Figure \ref{fig:satelliteDiagram} for a schematic).

Unfortunately, the approach of \cite{monotonicSimplification} does not work out of the box. We can indeed simplify our torus into a ``nice" form, but this does not force our diagram to become a satellite diagram. In fact it is untrue that grid diagrams of satellite knots can always be simplified monotonically to a satellite diagram. In \cite{kazantsev}, Kazantsev gives an explicit example of a grid diagram (of arc index 11) of a satellite knot (a cable of the trefoil) such that any sequence of elementary moves transforming it into a satellite diagram must at some point raise the arc index.

Nevertheless, we find that by simplifying the torus even further, we can force our diagram into the required form. We know from Kazantsev's example that we will not be able to do this without increasing the arc index of the diagram, but it is possible to gain control over how large this increase might need to be.

Our methods work just as well for some satellite links as they do for satellite knots (though some assumptions are necessary), and we obtain the following, which is the underlying theorem from which Theorems \ref{thm:bound} and \ref{thm:JSJbound} are deduced.

\begin{theorem}
    \label{thm:satelliteDiagram}
    If $L$ is a non-split link of arc index $\alpha$ contained in a disjoint union of solid tori $V = \bigsqcup V_i$ such that every $T_i = \partial V_i$ is JSJ in the exterior of $L$ and no $V_i$ is unknotted, then there is an isotopy of $(L,T)$ in $S^3$ such that afterwards $L$ is in an arc presentation of arc index at most $\frac{1}{2}\alpha^5+2\alpha$ and the corresponding grid diagram is a satellite diagram of a grid diagram corresponding to a collection of core curves of each $V_i$.
\end{theorem}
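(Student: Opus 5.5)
We use Dynnikov's open book foliation machinery from \cite{monotonicSimplification}. Fix an arc presentation of $L$ with arc index $\alpha$. This means $L$ lies in the open book of $S^3$ with binding $\ell$ (the $z$-axis together with $\infty$), and $L$ is a union of $\alpha$ arcs, each lying in a distinct page $\mathcal{D}_\theta$, with endpoints on $\ell$. The first step is to isotope the tori $T=\bigsqcup T_i$, keeping $L$ fixed, into Dynnikov's admissible position. In that position $T$ meets $\ell$ transversely in finitely many vertices, and the induced singular foliation $\mathcal{F}$ on $T$ (from the pages $\mathcal{D}_\theta$) has only finitely many saddle tangencies, each occurring on a page distinct from the pages containing arcs of $L$. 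Every regular leaf is then an arc or a circle. We would next run the monotone simplifications of \cite{monotonicSimplification} (exchange moves and stabilisation--destabilisation pairs). These eliminate circle leaves, vertices of valence one and two, and certain saddle patterns, without increasing the arc index. Because each $T_i$ is incompressible and not boundary parallel in the exterior, Euler characteristic zero forces the remaining foliation on each torus to be essentially a tiling of $T_i$ by saddle ``tiles'' in which every vertex has valence exactly four. This is the ``nice'' form mentioned in the introduction.

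In this nice form $T_i$ is a union of discs around the vertices, joined by bands. Each disc is a nearly horizontal patch near a point of $\ell$; each band is a patch sweeping through an interval of pages. This is the combinatorial shape of the boundary of a neighbourhood of a grid diagram: the binding intersections play the role of the corners of the companion's grid diagram, and the bands play the role of its edges. The second step is therefore to show that if \emph{every} vertex of $T$ is a ``corner'' vertex, and the arcs of $L$ inside $V_i$ enter and leave the vertex discs correctly, then $L$ is already a satellite diagram of a grid diagram of core curves. The torus would then be the boundary of a standard neighbourhood of such a grid diagram.

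Kazantsev's example shows that the nice form can fail to be of this type. The third step, which I expect to be the main obstacle, is to simplify further at a controlled cost. For this I would count: a torus in nice position has at most about $\alpha$ vertices on $\ell$ per ``level'', and the number of saddles is bounded by a polynomial in $\alpha$. I would then define a local move that slides a bad band or vertex cluster across the binding. Its cost is one stabilisation of $L$ for each strand of $L$ crossing the relevant region, so at most $\alpha$ new arcs per move. Each move must strictly decrease a complexity, such as the number of non-corner vertices or of saddles, that is bounded by roughly $\alpha^4/2$. Multiplying the cost per move by the number of moves gives an extra $\frac12\alpha^5$ arcs. A final $2\alpha$ term accounts for the arcs added when straightening each strand of $L$ so that its corners sit in small neighbourhoods of the companion's corners.

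The delicate points are as follows. One must show that a suitable move always exists while the foliation is not of corner type; this needs an analysis of the tile graph on a torus using incompressibility, and the JSJ condition to exclude annuli joining distinct $T_i$. One must also show that the moves preserve admissibility of the other tori. Finally, non-splitness and the hypothesis that no $V_i$ is unknotted are used to rule out degenerate cases, in which some $T_i$ would bound a solid torus on the wrong side or the foliation could have no vertices at all.
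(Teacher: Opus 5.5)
Your first step (admissible position, Dynnikov's monotone moves, and the Euler characteristic argument forcing a $4$-valent standard foliation) matches the paper. The two steps that actually produce the bound $\frac12\alpha^5+2\alpha$ are missing, however.

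\textbf{The binding weight bound.} You assert that a torus in the simplified position has ``at most about $\alpha$ vertices per level'' and polynomially many saddles, but nothing supports this. A position where no monotone move applies is only a local minimum, and nothing in Dynnikov's simplification controls the binding weight of $T$ in terms of $\alpha$. The JSJ hypothesis must enter exactly here. Some link exteriors contain infinitely many isotopy classes of (non-JSJ) satellite tori, while only finitely many can be realised below any fixed binding weight, so no such bound holds in general.

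The paper proves $w_\beta(T)\le 2\alpha^4$ (Proposition \ref{prop:bindingBound}) by a separate normal-surface argument:
\begin{enumerate}
\item Make $L$ simplicial in the join triangulation of $S^3$ by $\alpha^2$ tetrahedra.
\item Choose $(L,T)$ globally minimising binding weight and then weight, and make $T$ normal and PL-admissible.
\item Use maximal generalised parallelity bundles on both sides of each $T_i$ (available because weight-minimality excludes annular simplifications). These give two filling curves on $T_i$, each crossing at most $s_i$ separatrices, where $\sum s_i\le 2\alpha^2$ counts non-parallel normal squares.
\item Conclude with a rectilinear isoperimetric inequality.
\end{enumerate}
The JSJ condition is used to rule out both horizontal boundaries containing annuli of the same slope. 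It is not used to ``exclude annuli joining distinct $T_i$''.

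\textbf{Your third step.} Kazantsev's example is a standard foliation that is not in narrow position and admits no arc-index-non-increasing simplification. So showing that your ``local move sliding a bad band across the binding'' always exists is the whole difficulty. You neither construct the move nor identify a complexity it decreases.

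Your cost accounting also fails on its own terms. After the first stabilisations $L$ has more than $\alpha$ arcs, so ``at most $\alpha$ new arcs per move'' cannot be maintained over $\alpha^4/2$ moves without further argument. Also, the final straightening of $L$ only pushes arcs through time and costs nothing. The paper's $2\alpha$ is the original $\alpha$ plus an additive $\alpha$ from the redirection step.

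The paper's mechanism is a single global operation per torus:
\begin{enumerate}
\item Find a meridian curve lying in two pages (Lemma \ref{lem:twoPageMeridian}).
\item Redirect arcs of $L$ along ``green arcs'' through a small disc $B$ at one segment, so that $L$ misses the rest of the meridian disc.
\item Push $T_i$ across that disc, so the new meridian consists of just two arcs.
\end{enumerate}
This costs at most $\frac14\alpha w_\beta+\alpha$ arcs, measured once against the original presentation. The paper then shows three things:
\begin{itemize}
\item Dynnikov's moves preserve a length-2 meridian.
\item In a standard foliation of a knotted torus with a repeated arc, every repeated arc is a semicircle (Lemma \ref{lem:repeatedArcToriAreNice}).
\item Reordering events then gives narrow position at no further cost (Proposition \ref{prop:toNarrowPosition}).
\end{itemize}
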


Our proof proceeds as follows: We put our link into an arc presentation of arc index $\alpha$ and make $T$ admissible. In Section \ref{sec:admissible} we outline the theory of arc presentations and admissible surfaces that we will need, and in Section \ref{sec:admissibleTori} we describe the particulars of the theory in our situation. In Section \ref{sec:repeatedArcs} we show how, if $T$ is not already in ``narrow position" (our final goal) but has the property that there is a non-trivial curve on $T$ comprised of only two leaves of the foliation, then we can isotope it into narrow position directly. In Section \ref{sec:noRepeatedArcs} we see that if $T$ does not have this property, we can perform an isotopy to grant it the property, which we accomplish by finding a suitable meridian curve and isotoping $T$ to give this curve the required form. During this isotopy we may need to increase the arc index of $L$ by an amount that we control in terms of $\alpha$ and the binding weight of $T$. Then in Section \ref{sec:bindingWeight} we establish a bound on this binding weight in terms of $\alpha$, and this is the final ingredient we need to prove Theorem \ref{thm:satelliteDiagram}.

In Section \ref{sec:bound} we put lower bounds on the arc index of a satellite diagram in terms of $C$ and $P$, obtaining Theorem \ref{thm:JSJbound}. In Section \ref{sec:nonJSJ} we explain how, in the knot case, we can still obtain a bound if $T$ is not JSJ, obtaining Theorem \ref{thm:bound}.

Finally in Section \ref{sec:supplementals} we give two extra consequences of our discussion, including a somewhat novel (though not practical) algorithm for satellite knot detection.

\bigskip

\noindent\textbf{Acknowledgements.} The author would like to thank Marc Lackenby for many useful conversations, as well as Panos Papazoglou and Marc Kegel for independently pointing out that these bounds lead to an algorithm for recognising satellite knots.

\noindent\rule{2.1cm}{1pt}

{\footnotesize No LLMs were used in the creation of this article.}

\section{Arc presentations and admissible surfaces}
\label{sec:admissible}

\subsection{Arc presentations}

The 3-sphere $S^3$ can be thought of as the join of two circles $S_\phi^1$ and $S_\theta^1$. That is, the space
\[
\{(\phi,\tau,\theta) \mid \phi \in S_\phi^1, \tau \in [0,1], \theta \in S_\theta^1\} / \sim
\]\[
\text{where } (\phi,0,\theta_1) \sim (\phi,0,\theta_2) \text{ and } (\phi_1,1,\theta) \sim (\phi_2,1,\theta).
\]

The circle $S_\phi^1$ is naturally included in this space as the subspace with $\tau=0$, while $S_\theta^1$ is identified with the subspace $\tau=1$. We will call $S_\phi^1$ the \emph{binding circle}.

The space $S^3 \setminus S_\phi^1$ decomposes into open discs
\[D_t = \{(\phi,\tau,t) \in S^3 \mid \phi \in S_\phi^1, \tau > 0\}\]
whose closures have $S_\phi^1$ as their boundary. Thus we have an open book decomposition of $S^3$ where each page is a disc with a particular $\theta$ coordinate, and $S_\phi^1$ is the binding circle.

Frequently we will think of $t$ (determining which page of the open book decomposition a point is in) as being a time parameter, and will speak of things happening ``after" other things, or events being pushed into the past or future. Since $S_\theta^1$ is a circle, the flow of time is cyclic.

\begin{definition}
    An \emph{arc presentation} of a link $L$ is a link $L'$ isotopic to $L$ such that
    \begin{itemize}
        \item $L' \cap S_\phi^1$ is a finite set of points, which we call \emph{vertices}.
        \item for every $t$, $L' \cap D_t$ is either empty or is a single open arc whose ends approach two distinct vertices.
    \end{itemize}
    The \emph{arc index} of an arc presentation is the number of points on $S_\phi^1$ it meets, or equivalently the number of pages that it intersects in an arc.
    The \emph{arc index} $\alpha(L)$ of $L$ is the minimum arc index among all of its arc presentations.
\end{definition}

Arc presentations of a link correspond to \emph{grid diagrams} of that link, which are link diagrams that are comprised only of vertical and horizontal arcs, and with the properties that no two horizontal arcs share the same vertical coordinate, no two vertical arcs share the same horizontal coordinate, and wherever there is a crossing the vertical arc passes over the horizontal arc.

Vertical arcs correspond to arcs of the arc presentation, with their horizontal coordinate being the $\theta$ coordinate of the relevant page, while horizontal arcs correspond to vertices of the arc presentation, with their vertical coordinate being the $\phi$ coordinate of the vertex. For an example, see Figure \ref{fig:arcGrid}.

\begin{figure}
    \centering
    \includegraphics[width=0.7\linewidth]{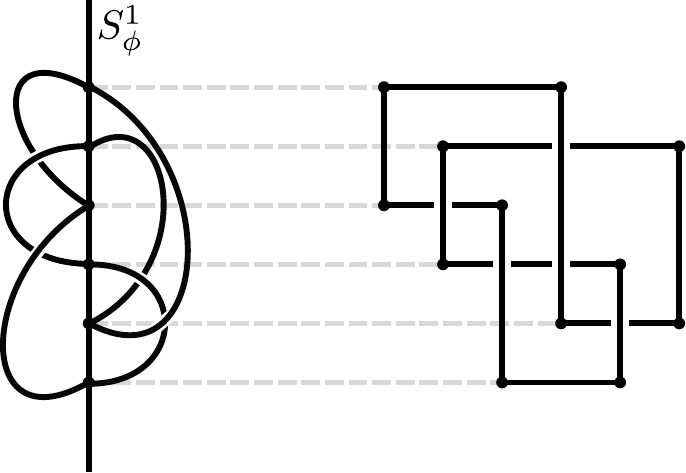}
    \caption{An arc presentation of the figure-eight knot and its corresponding grid diagram. The arc index of this arc presentation is equal to the arc index of the figure-eight knot, which is 6. Here we view the binding circle as passing through the point at infinity, so that we see it as a straight vertical line.}
    \label{fig:arcGrid}
\end{figure}

Any two grid diagrams or arc presentations of the same link are related by a sequence of elementary moves of the following types:
\begin{itemize}
    \item Stabilisation: Replacing an arc of a grid diagram with three arcs, the second of which is very short (in the sense that no other arcs have coordinates in the interval it crosses). In the case where the new short arc is vertical, this corresponds to splitting a vertex into two vertices that are adjacent on the binding circle and which are connected by a short arc in some page. In the case where the new short arc is horizontal, this corresponds to having an arc of the arc presentation make a detour to some point of the binding circle, at which it moves to a nearby page and then continues to the original destination.
    \item Destabilisation: The inverse operation of stabilisation.
    \item Column exchange: Swapping the horizontal positions of two adjacent vertical arcs, as long as these arcs do not interleave (we say the arcs do not interleave if the intervals of vertical coordinates they span are either disjoint or nested). This corresponds to pushing an arc through time past another arc, as long as doing so won't cause them to collide during the isotopy.
    \item Row exchange: Swapping the vertical positions of two adjacent horizontal arcs, as long as these arcs do not interleave. This corresponds to moving one vertex past another on the binding circle, as long as this can be accomplished without a collision.
    \item Cyclic permutation: Moving an arc from one edge of the diagram to the other edge. This corresponds to leaving an arc presentation unchanged, but choosing a different arbitrary ``starting point" for $\phi$ or $\theta$.
\end{itemize}

\subsection{Admissible surfaces}

We will want to put our collection of satellite tori into \emph{admissible form}, as defined in \cite{monotonicSimplification}. Dynnikov's definition has several constraints that are automatically satisfied if the surface in question is closed, so we omit them.

\begin{definition}
    A closed surface $S$ in $S^3 \setminus L$, where $L$ is an arc presentation, is \emph{admissible} if
    \begin{itemize}
        \item $S$ is smooth everywhere.
        \item $S$ intersects the binding circle transversely at finitely many points, which we call \emph{vertices}.
        \item the singular foliation $\mathcal{F}$ on $S \setminus S_\phi^1$ defined by $d\theta=0$ has only finitely many singularities, which are points of tangency of $S$ with pages $D_t$.
        \item all singularities of $\mathcal{F}$ are of Morse type (that is, local extrema or saddles of the circle-valued function $\theta$).
        \item each page $D_t$ contains at most one of: an arc of $L$, or a singularity of $\mathcal{F}$.
    \end{itemize}
\end{definition}

The appearance of the foliation $\mathcal{F}$ surrounding a vertex, saddle, or pole is shown in Figure \ref{fig:singularities}.

\begin{figure}
    \centering
    \includegraphics[width=0.7\linewidth]{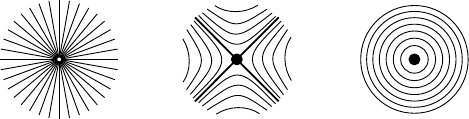}
    \caption{On the left is the region of $\mathcal{F}$ surrounding a vertex, where the binding circle meets $S$ transversely. In the centre is the region of $\mathcal{F}$ surrounding a saddle. On the right is the region of $\mathcal{F}$ surrounding a pole.}
    \label{fig:singularities}
\end{figure}

Being admissible is a generic condition for smooth surfaces in $S^3 \setminus L$, so we may assume our collection of satellite tori has these properties by performing a small perturbation if it does not.

The \emph{binding weight} of an admissible surface is its number of vertices.

We say a leaf of $\mathcal{F}$ is a \emph{separatrix} if it is incident to a saddle at one end. Due to the condition that no two saddles may lie in the same page, it is not possible for a separatrix to be incident at both ends to distinct saddles. Thus every separatrix either joins a saddle to a vertex, or joins a saddle to itself.

The \emph{valence} of a vertex is the number of separatrices that are incident to it. Each saddle has an orientation associated with it according to whether the orientation of $S$ agrees or disagrees with the orientation of $D_t$ at that point. In our case, where each satellite torus bounds a solid torus in $S^3$ on one side, and a knot exterior on the other, the orientation of a saddle is determined by whether the solid torus lies immediately to the future or to the past of the saddle.

We note in the following lemma that in our setting, we may arrange by isotoping $S$ that $\mathcal{F}$ has no poles, that no separatrix joins a saddle to itself and no leaves are circles (this is also true for other types of surface, such as those Dynnikov is concerned with in \cite{monotonicSimplification}, but this is not relevant for our purposes).

\begin{lemma}
\label{lem:noPolesOrClosedLeaves}
    If $T$ is an admissible collection of satellite tori for a non-split arc presentation $L$, then $T$ can be isotoped in $S^3 \setminus L$ so that $\mathcal{F}$ has no poles, separatrices joining a saddle to itself, or closed (circular) leaves.
\end{lemma}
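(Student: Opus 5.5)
We follow the standard approach of Bennequin and Birman--Menasco, as adapted by Dynnikov in \cite{monotonicSimplification}. The plan is to remove the three kinds of bad features in sequence: first closed leaves, then poles, then separatrices joining a saddle to itself. At each stage we use an isotopy of $T$ supported away from $L$. The tools are the incompressibility of each $T_i$ in the exterior of $L$ and the irreducibility of that exterior. Irreducibility holds because $L$ is non-split, so every sphere in $S^3\setminus L$ bounds a ball there.

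\textbf{Closed leaves.} A closed leaf $c$ of $\mathcal{F}$ lies in a single page $D_t\setminus L$. Since $L\cap D_t$ is at most one arc whose endpoints lie on the binding, $D_t\setminus L$ is a union of open discs, so $c$ bounds a disc $\Delta\subset D_t\setminus L$. Choose $c$ innermost among closed leaves meeting $\Delta$ in that page; this is possible since the intersections of $T$ with a generic page are finitely many.

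\begin{itemize}
\item If $c$ is inessential on $T$, it bounds a disc $E\subset T$. The sphere $E\cup\Delta$ bounds a ball in $S^3\setminus L$, and we isotope $E$ across this ball to just beyond $\Delta$. This removes a family of closed leaves, and standard general position recovers admissibility.
\item If $c$ is essential on $T$, then $\Delta$ is a compressing disc for $T_i$ in the complement of $L$, because the interior of $\Delta$ misses $T$ by innermostness. This contradicts incompressibility. Here we must check both sides: on the knot-exterior side the contradiction is immediate. On the solid-torus side, $V_i\setminus L$ is incompressible because $L$ is not contained in a ball in $V_i$ and $V_i$ contains no core curve; this uses the satellite hypotheses.
\end{itemize}

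Having no closed leaves near a local extremum of $\theta$ immediately forbids poles, since a pole is surrounded by circular leaves. So removing closed leaves removes all poles at once.

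\textbf{Self-joining separatrices.} Suppose a separatrix runs from a saddle $s$ back to $s$. Then near $s$ the adjacent leaves on one side of that loop are closed curves, because a small loop through a saddle in a level set is approximated by nearby level circles. Those circles are exactly the closed leaves excluded above.

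Alternatively, we can perform the closed-leaf elimination in a way that also cancels saddles. Take the disc region on $T$ swept by the closed leaves around the saddle loop. Push it across the corresponding ball, which again exists by irreducibility and incompressibility as above. This cancels a pole--saddle pair and strictly decreases the number of singularities.

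\textbf{Termination and the main obstacle.} To guarantee termination, we track the complexity given by the number of saddles plus poles. Each move should strictly decrease this count or eliminate closed leaves without creating new singularities. The binding weight does not increase under these moves.

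The main obstacle is this bookkeeping. We must ensure that the isotopy across the ball bounded by $E\cup\Delta$ does not introduce new tangencies or vertices. It must also keep $T$ disjoint from $L$ and from the other tori $T_j$. For the disjointness, choose the innermost disc $\Delta$ among intersections with all components of $T$, so the ball meets no other $T_j$.
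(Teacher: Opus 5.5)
\textbf{The gap: your closed-leaf move does not remove closed leaves.} Everything else in your argument is deduced from the absence of closed leaves, so the gap propagates. In Case~1 you replace $E$ by a copy of $\Delta$ pushed just off $D_t$. That new disc lies in a product neighbourhood $\Delta\times(t-\epsilon,t+\epsilon)$ missing the binding circle, and it is bounded by a closed leaf, so $\theta$ has an interior extremum on it. The move therefore always produces a pole surrounded by concentric closed leaves; it never removes closed leaves.

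Worse, suppose $c$ is one of the circles around an existing pole whose family ends at a saddle loop. Then the move reproduces the same foliation, so your complexity (saddles plus poles) does not drop and the procedure need not make progress. Your implications ``no closed leaves $\Rightarrow$ no poles'' and ``a self-joining separatrix has closed leaves on its inner side'' are both correct, but they rest on a base case you never achieve. (A minor point: on the solid-torus side, incompressibility only needs $L$ to meet every meridian disc of $V_i$; the core-curve condition plays no role.)

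\textbf{The missing ingredient: pole--saddle cancellation, by induction on the number of saddles.} This is how the paper proceeds, in the opposite order to yours.
\begin{enumerate}
\item Take a separatrix joining a saddle to itself, innermost in its page $D_t$. The disc $D$ it bounds in $D_t$ misses $L$, because a page containing a saddle contains no arc of $L$.
\item Incompressibility gives a disc $D'\subset T$ with the same boundary, and non-splitness gives a ball across which $D'$ can be swapped for $D$.
\item Pushing the interior of $D$ to the appropriate side produces a pole whose concentric circles end at that saddle, and this pole cancels with it. The number of saddles strictly decreases.
\end{enumerate}
Your ``alternatively'' paragraph gestures at this, but only in the case where the closed leaves inside the loop already end at a pole. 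When they instead end at another saddle loop (the paper's pole of the second type), there is nothing to cancel, and the swap across the ball is exactly what is needed.

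Once no self-joining separatrices remain, there are no poles, because the region of the foliation containing a pole is bounded by such separatrices. Closed leaves are then excluded by your own swap argument: it would create a pole with no saddle loop, which is a contradiction.
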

\begin{proof}
    First we observe how poles may occur in $\mathcal{F}$.
    
    Consider the component of ($S \setminus \text{separatrices}$) containing a pole. This will be the pole together with some circular leaves of $\mathcal{F}$, and its boundary will either be a separatrix that connects a saddle to itself, or a pair of separatrices that connect a saddle to itself, as depicted on the left and right of Figure \ref{fig:smoothPole} respectively. In the first case, we can smooth the pole out and cancel it with the saddle, changing the foliation locally as the centre of Figure \ref{fig:smoothPole} shows.

    \begin{figure}
        \centering
        \includegraphics[width=0.8\linewidth]{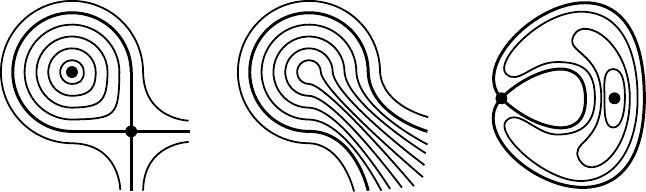}
        \caption{On the left is the region of the foliation surrounding one type of pole. After an isotopy to ``smooth out" the pole, it cancels with the saddle, and the foliation is as pictured in the centre. On the right is the region surrounding the other type of pole.}
        \label{fig:smoothPole}
    \end{figure}
    
    The second type of pole is not smoothable in a similar way. However, we note that if $\mathcal{F}$ has no separatrix connecting a saddle to itself, it also will not have any poles.

    Say $\mathcal{F}$ has a separatrix connecting a saddle to itself. This leaf is contained in a page $D_t$. This page could possibly contain two such separatrices, and they may be nested. In this case we take the innermost one. This leaf bounds a disc $D$ in the page $D_t$ that does not meet $S$ (because the leaf was innermost) or $L$ (since no arc of $L$ lies in the same page as a saddle of $\mathcal{F}$). Since $T$ is incompressible in $S^3 \setminus L$, this leaf also bounds a disc $D' \subseteq T$. Then $D \cup D'$ can be perturbed to be a smooth sphere disjoint from $T$ in $S^3$ and so $D \cup D'$ bounds a ball whose interior is disjoint from $T$ and $L$ (since $L$ is non-split). Thus we may isotope $D'$ across this ball, replacing it with $D$. Then we can push the interior of $D$ slightly into the past or future (one of these will introduce a circle of tangency to $D_t$ along $\partial D$, but the other will not, and we choose that one), forming a pole of the first type, which we can cancel with the saddle as described above.

    This process reduces the number of saddles of $\mathcal{F}$, and so by doing such moves until no more are available, we obtain a foliation in which no separatrix connects a saddle to itself. This foliation therefore also has no poles.

    Finally, if $\mathcal{F}$ has a closed leaf, we take an innermost closed leaf among those in its page. This bounds a disc in $D_t$ but also in $T$ and just as above we may isotope the disc in $T$ so that it lies in $D_t$. Then we may push it slightly into the future or past, introducing a pole without introducing any saddles. But this is not possible, as no poles can exist without leaves that connect a saddle to itself.

    Therefore $\mathcal{F}$ has no poles, separatrices connecting saddles to themselves, or closed leaves.
\end{proof}

Thus we may assume that poles and closed leaves are absent, and the four separatrices emerging from a saddle go to four distinct vertices. As a result, the separatrices form a bipartite graph; every saddle is connected to exactly four distinct vertices, and every vertex is connected to some number (its valence) of distinct saddles. Every complementary region of the separatrices is a square with boundary consisting of two vertices and two saddles connected by four separatrices, and we call such a region a \emph{tile}.

It should be noted that the word ``tile" has been used by different authors to describe either a square region with four vertices as corners and a saddle in its centre or one with two vertices and two saddles as corners. We use the latter meaning throughout.

We say two vertices are adjacent in $\mathcal{F}$ if there are leaves leading from one to the other, or equivalently if they both lie in the boundary of some common tile. Similarly we say two saddles are adjacent if they both lie in the boundary of some common tile.

The \emph{star} of a vertex is the union of all the leaves emerging from it. The saddles contained in the star of a vertex are exactly those that it is connected to by a separatrix.

\subsection{Arcs and events}

One way to think of an admissible surface of this kind (and to determine an isotopy class of such surfaces by finite data) is to consider the intersection of $S$ with each page $D_t$. Assuming as in our case that $S$ has no closed leaves, for pages that do not contain a saddle of the foliation, $S \cap D_t$ consists of a collection of arcs joining all of the vertices on the binding circle together in pairs (note that this means the binding weight is guaranteed to be even).

We can think of an admissible surface as such a collection of arcs that changes with time. If two pages are not separated by a page containing a saddle, then the arcs in those pages will be the same (up to isotopy). However, when we pass a page containing a saddle, the effect on the collection of arcs is for two of the arcs to ``merge together, then split apart" to form a different two arcs. For instance, if $v_1,v_2,v_3,v_4$ are ordered counter-clockwise around the binding circle, then when we pass a page containing a saddle whose four separatrices lead to $v_1,v_2,v_3,v_4$, the change to the collection of arcs will be to replace the arcs $v_1v_2$ and $v_3v_4$ with $v_1v_4$ and $v_2v_3$, or possibly the latter two arcs will be replaced with the former two.

In this way a saddle corresponds to an \emph{event} that happens to the collection of arcs.

As we move around $S_\theta^1$, we pass each saddle of the foliation, and the corresponding events happen in sequence, until eventually we return to our starting point and the collection of arcs has returned to its original state.

Conversely, if we have some sequence of collections of arcs, where each differs from the last by one of these saddle-style events in which two arcs merge and split to form two others, and such that the final collection in the sequence is equal to the first, then this can be realised as an admissible surface.

Thus one can draw an admissible surface by drawing the state of the collection of arcs between each event, and a few diagrams of this type will be seen in Section \ref{sec:admissibleTori} (see Figures \ref{fig:thinFig8}, \ref{fig:kazantsev}).

In this framework, the valence of a vertex $v$ is equal to the number of events in which the arc emerging from $v$ is involved, which can also be thought of as the number of times the arc emerging from $v$ changes its destination.

Frequently it is useful to regard pages containing arcs of $L$ as also being events, in which the arc of $L$ appears for an instant and then disappears again immediately.

\subsection{Simplifying moves}
\label{subsec:simplifyingMoves}

Lemma 6 in \cite{monotonicSimplification} describes how an admissible surface may be simplified in various situations. In particular for $S$ a closed admissible surface with foliation $\mathcal{F}$ in the complement of an arc presentation $L$ we have the following:

\begin{lemma}
\label{lem:valence<4}
    If $\mathcal{F}$ has a vertex of valence less than four, and $S$ has no sphere components, there is an isotopy of $(L,S)$ in $S^3$ taking $L$ to a new arc presentation and $S$ to a new admissible surface that reduces the binding weight of $S$ by two, without raising the arc index of $L$.
\end{lemma}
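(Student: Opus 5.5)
We argue case by case on the valence $k\in\{0,1,2,3\}$ of a vertex $v$ of $\mathcal{F}$, in the spirit of Lemma 6 of \cite{monotonicSimplification}. By Lemma \ref{lem:noPolesOrClosedLeaves} (or directly from the local structure in the closed case) we may assume that the separatrices form a bipartite graph whose complementary regions are tiles. Consider the star of $v$, viewed as the union of the tiles incident to $v$. Since $S$ has no sphere components, the component of $S$ containing $v$ has at least one saddle. Hence $k=0$ cannot occur. Indeed, if $v$ had no separatrices, the arc from $v$ would never change its endpoint, and the leaves from $v$ would sweep out a sphere formed from two vertices, which is excluded. Similarly, valence $1$ forces the two sides of the single separatrix to lie in the same tile, which a square tile with two vertices and two saddles does not permit. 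So the real cases are $k=2$ and $k=3$.

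\textbf{Valence 2.} Let the two saddles adjacent to $v$ be $s_1,s_2$, occurring at times $t_1<t_2$. Between them the arc from $v$ goes to a fixed vertex $u$, and outside that interval it goes to a fixed vertex $u'$. Both tiles at $v$ then share the vertices $v$ and $u$ (respectively $v$ and $u'$). The leaves from $v$ over the interval $(t_1,t_2)$ form a disc (a bigon) lying in pages. I would isotope $S$ by pushing $v$ along the binding circle toward $u$, sliding the two events off one another. This merges $v$ with $u$ and cancels the pair of saddles, removing $v,u$ and $s_1,s_2$. The obstruction to this move comes from vertices of $L$ on the binding circle between $v$ and $u$, and from arcs of $L$ in the pages swept out by the disc. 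Following Dynnikov, vertices of $L$ on the short side can be pushed along using row exchanges, since the arcs of $S$ separate them consistently. Arcs of $L$ crossing the time interval are handled by column exchanges, or by choosing the complementary interval, since the disc spanned by the leaves misses $L$. None of these operations is a stabilisation, so the arc index does not increase.

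\textbf{Valence 3.} Here the three saddles at $v$ occur at times $t_1<t_2<t_3$, and consecutive pairs of them are adjacent through tiles at $v$. Two of these saddles must have the same orientation; more precisely, two consecutive saddles have the correct relative configuration for an exchange move. I would perform Dynnikov's exchange move, which moves one saddle past another in time. This changes the foliation locally so that $v$ becomes valence $2$, without changing binding weight or arc index. The valence-$2$ case then applies.

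\textbf{Main obstacle.} The hardest part is checking that the isotopies can be carried out while only elementary moves that do not raise the arc index act on $L$. In particular, one must show that arcs of $L$ lying in the pages between the two saddle times do not interleave with the relevant region of $S$, so that they can be commuted past by column exchanges. I would handle this by exploiting the fact that the disc spanned by the leaves from $v$ is disjoint from $L$. This means each arc of $L$ in that time range has both endpoints on one side of the relevant binding interval, which is exactly the non-interleaving condition required for column exchanges.
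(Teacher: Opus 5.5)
\textbf{Verdict: same skeleton as the paper, but the valence-$2$ step as you justify it would not work.}

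Your case analysis matches the paper's sketch. Valence $0$ gives a sphere component. Valence $1$ cannot occur. Valence $3$ forces two adjacent saddles of the same orientation, and reorganising them lowers the valence to $2$. Valence $2$ is removed by bringing $v$ next to a neighbour on $S^1_\phi$ and pushing the emptied bigon across the binding circle. The valence $0$, $1$ and $3$ parts are fine at the paper's level of detail.

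The gap is in your justification of the valence-$2$ step, which carries the real content.
\begin{itemize}
\item You list as obstructions only the vertices of $L$ between $v$ and $u$ and the arcs of $L$ in pages of $(t_1,t_2)$. Vertices of $S$ on that segment obstruct equally, and you never deal with them.
\item Your mechanism fails as stated. An $L$-vertex adjacent on $S^1_\phi$ to an $S$-vertex cannot be row-exchanged with it without changing the arc configurations. The $S$-vertex has an arc in every page, in particular in the pages of the $L$-vertex's two arcs. Swapping two adjacent points that both have arcs in one page always creates a crossing in that page.
\item The column-exchange discussion in your last paragraph targets a non-issue. Once no points lie between $v$ and $u$, the half-discs bounded by $vu$ and that segment contain no arcs of $L$ or $S$ at all.
\end{itemize}

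\textbf{The missing idea: apply the sealing property to whole blocks of the binding circle.}

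Let $A$ be the segment from $v$ to $u$ not containing $u'$, and $C$ the segment from $u'$ to $v$ not containing $u$.
\begin{itemize}
\item For $t\in(t_1,t_2)$ the arc $vu$ forces every arc of $S$ or $L$ with an endpoint in $A$ to have its other endpoint in $A$.
\item For $t\in(t_2,t_1)$ the arc $vu'$ does the same for $C$.
\item Since $v$ is only ever joined to $u$ or $u'$, and $u'$ is joined to $v$ throughout $(t_2,t_1)$, no arc ever joins $A$ to $C\cup\{u',v\}$.
\end{itemize}

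So the adjacent blocks $A$ and $C\cup\{u',v\}$ interact with the remaining points (namely $u$ and the segment from $u$ to $u'$) only during the complementary intervals $(t_2,t_1)$ and $(t_1,t_2)$ respectively. They can therefore be interchanged by a single generalised exchange move, which does not raise arc index and carries $S$ along.

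One checks that after this interchange every page remains planar and every saddle event, including those at $t_1$ and $t_2$, remains a valid merge-and-split. So the combinatorics of $\mathcal{F}$ is unchanged. Now $v$ and $u$ are adjacent with no points of $L$ or $S$ between them, and the bigon can be pushed across $S^1_\phi$ disjointly from $L$, removing $v$, $u$, $s_1$, $s_2$.

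This is exactly the form of part (1) of Proposition~\ref{prop:makeStandard}: a generalised exchange move followed by an isotopy of the link complement.
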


The idea of the proof is that vertices of valence one do not arise, and vertices of valence zero do not arise unless $S$ has a sphere component (since the star of such a vertex is a sphere). Given a vertex of valence two, Dynnikov describes how to isotope $S$ to eliminate two vertices. In the case of a vertex of valence three, due to the odd valence, there must be a pair of adjacent saddles in the star that have the same orientation. Using this, it is possible to isotope $S$ to reduce the valence of this vertex by one (so then it has valence two and the previously mentioned isotopy can be performed). The same proof gives the following fact which will be useful.

\begin{lemma}
    If the star of a vertex contains two adjacent saddles with the same orientation, there is an isotopy of $(L,S)$ in $S^3$ that reduces the valence of this vertex by one, without raising the arc index of $L$.
\end{lemma}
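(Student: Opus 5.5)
The plan is to reproduce the local move from Dynnikov's treatment of the valence-three case (Lemma 6 of \cite{monotonicSimplification}), the one underlying Lemma \ref{lem:valence<4}, phrased directly in terms of two adjacent saddles. Let $v$ be the vertex, and let $s_1,s_2$ be saddles in the star of $v$ with the same orientation. Adjacency means they are consecutive around $v$, so they bound a common tile $R$ whose corners are $v$, $s_1$, $v'$, $s_2$ for some other vertex $v'$. Suppose $s_1$ occurs in page $D_{t_1}$ and $s_2$ in page $D_{t_2}$. The leaves of $R$ then sweep out the time interval $(t_1,t_2)$, possibly passing the cyclic point of $S^1_\theta$. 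During this interval, the arc of $S\cap D_t$ emerging from $v$ runs to $v'$.

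The first key step is to slide $s_2$ backwards in time until it occurs immediately after $s_1$. In the arcs-and-events picture this means reordering events. Between $t_1$ and $t_2$ there may be other saddle events, which do not involve the arc $vv'$, and there may be pages containing arcs of $L$. We must check that each of these can be commuted past $s_2$.

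\begin{itemize}
\item A saddle event not involving the arcs involved in $s_2$ commutes with it by an isotopy of $S$ supported near the relevant pages.
\item An arc of $L$ in a page $D_t$ with $t_1<t<t_2$ must be disjoint from the arc $vv'$ and from the other arc participating in $s_2$. Pushing the portion of $S$ swept by $s_2$ past it is then either an isotopy of $S$ in the complement of $L$, or, where it would collide, is replaced by moving $L$ by column exchanges or cyclic permutations. None of these change the arc index.
\end{itemize}

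The author states that ``the same proof gives'' this fact, so I would present exactly this commutation and the final cancellation, citing \cite{monotonicSimplification} for the details of the collision-free moves.

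The second key step uses the same-orientation hypothesis. Once $s_1$ and $s_2$ occur in consecutive events, both involving the arc from $v$, and with the solid torus on the same side of each, the two saddles can be merged. The resulting local picture is a single page-tangency that changes the destination of $v$'s arc only once. Equivalently, the part of $S$ near $R$ forms a ``fold'' that can be flattened by pushing $R$ across a page.

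This is where the orientation matters. If the orientations were opposite, the composite of the two events would not be realisable by a single saddle with the correct co-orientation; the two saddles would instead form a cancelling pair requiring a pole. With equal orientation, the two saddles at $v$ are replaced by one saddle. This reduces the valence of $v$ by one, as in Dynnikov's valence-three argument. It does not introduce poles or closed leaves, and it keeps $S$ admissible after a small generic perturbation.

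I expect the main obstacle to be the first step: controlling arcs of $L$ lying in pages between $t_1$ and $t_2$. We must show they never block the slide, or can be displaced by column exchanges that never stabilise. The key point I would argue is that any such arc is disjoint from the disc in the pages swept by the arc $vv'$, because $L$ misses $S$. Hence it does not interleave with the moving piece of surface, and the required exchanges are always permitted.
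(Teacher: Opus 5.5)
\textbf{There is a genuine gap in your second step, which is the heart of the lemma: two Morse saddles cannot be merged into one.} In the foliation of an admissible surface, vertices and poles have index $+1$ and saddles have index $-1$. Hence $\#\{\text{vertices}\}+\#\{\text{poles}\}-\#\{\text{saddles}\}=\chi(S)$. Your move keeps every vertex, creates no poles and deletes a saddle, so no isotopy can realise it. The arcs-and-events picture shows the same thing. The consecutive events $s_1$ (consuming $a,b$ and producing $vv'$ and $d$) and $s_2$ (consuming $vv'$ and a partner arc $e$) together replace the three arcs $a,b,e$ by three other arcs. A single saddle event replaces exactly two arcs by two. The ``single page-tangency'' you describe is the degenerate six-pronged saddle at the instant both saddles lie in one page. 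It is not admissible, and any generic perturbation of it gives two saddles again. Your explanation of the opposite-sign case is also not the real reason: cancellation is always saddle against pole, never saddle against saddle.

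\textbf{What actually happens is that the two saddles are \emph{exchanged} in time, not merged.} This is the move the paper describes in its discussion of the effect on the foliation, following Lemma 6 of \cite{monotonicSimplification}. Suppose both saddles are positive. Then $s_1$ leaves the shaded side of $vv'$ facing $d$, so $s_2$ merges across the unshaded side of $vv'$. The two bands therefore lie on opposite sides of $vv'$; with opposite signs they would lie on the same side, and this is the true role of the hypothesis. Consequently the band of $s_2$ lies in a part of the page that already existed before $s_1$. Its foot can be slid off the band of $s_1$ onto $a$ or $b$, and $s_2$ can then be performed first. On $\mathcal{F}$ this retiles the hexagon formed by the two four-vertex regions around $s_1,s_2$ with its other diagonal. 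Both saddles survive with their signs, $v$ and $v'$ each lose one separatrix, and the endpoints of the new diagonal each gain one.

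\textbf{The same bookkeeping by sides of $vv'$ also repairs your first step.} As written, that step fails when some event in $(t_1,t_2)$ creates the partner arc $e$, since that event cannot be commuted past $s_2$. Every event in $(t_1,t_2)$ other than $s_1,s_2$ avoids $vv'$, so it lies on one side of it. Push the events and arcs of $L$ on $s_2$'s side to just before $t_1$, and those on the other side to just after $t_2$; then $s_1$ and $s_2$ become consecutive. Arcs of $L$ on opposite sides of $vv'$ have their endpoints in complementary segments of $S^1_\phi$, so they never interleave. Hence only column exchanges are needed, and the arc index does not increase.
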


\begin{corollary}
\label{cor:valence4AdjacentSaddles}
    If $\mathcal{F}$ contains a vertex of valence four such that two adjacent saddles in its star have the same orientation, then there is an isotopy of $(L,S)$ in $S^3$ reducing the binding weight by two, without raising the arc index of $L$.
\end{corollary}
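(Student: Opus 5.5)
The plan is to apply the preceding lemma once and then fall back on Lemma \ref{lem:valence<4}. Let $v$ be a vertex of valence four whose star contains two adjacent saddles with the same orientation.

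First, invoke the preceding lemma. It gives an isotopy of $(L,S)$ in $S^3$ after which $v$ has valence three. This isotopy does not raise the arc index of $L$, and $L$ remains an arc presentation and $S$ an admissible surface. The isotopy is local to the star of $v$, so the binding weight of $S$ does not increase. I would check this against Dynnikov's construction: it moves saddles across each other in time, or slides a separatrix past a vertex, without creating new intersections with the binding circle.

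Next, note that $S$ has no sphere components. In the setting of interest $S$ is a collection of tori, and in any case the surface is assumed to be as in Lemma \ref{lem:valence<4}. So Lemma \ref{lem:valence<4} applies to the new foliation, which now has a vertex of valence three. It produces a further isotopy that reduces the binding weight by two without raising the arc index.

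Composing the two isotopies gives the required one: the binding weight drops by two overall and the arc index never increases. The only step needing care is the first one, namely confirming that the valence-reducing isotopy does not increase the binding weight. If it could, the net change would not be a reduction of two. I expect this to follow directly from the fact that the valence-three case of Lemma \ref{lem:valence<4} is proved by exactly this route, so the same bookkeeping applies here.
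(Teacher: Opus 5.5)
Your proposal is correct and matches the argument the paper intends: the preceding lemma lowers $v$ to valence three, and then Lemma \ref{lem:valence<4} removes two vertices without raising the arc index. The point you flag is settled by the paper's description in Section \ref{subsec:foliationEffect}, where the valence-reducing move only replaces one diagonal of the hexagon formed by the two saddles' regions with another, so no vertices are created or destroyed and the binding weight is unchanged in the first step.
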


The isotopies described in Lemma \ref{lem:valence<4} and Corollary \ref{cor:valence4AdjacentSaddles} strictly reduce the binding weight, and therefore we may keep applying these moves whenever possible, and the process must terminate eventually. Once this is done, we have isotoped $(L,S)$ so that $S$ has no vertices of valence less than four, and every vertex of valence four is surrounded in $\mathcal{F}$ by saddles of alternating orientations.

\subsection{The effect on the foliation}
\label{subsec:foliationEffect}

It will be useful in section \ref{sec:noRepeatedArcs} to know how the foliation is altered when we apply these simplifying moves. Precise details of what the moves involve and why they are possible can be found in Lemma 6 of \cite{monotonicSimplification}; we focus here only on what happens to $\mathcal{F}$.

When we have a vertex $v$ of valence two, we first perform an isotopy which does not alter the foliation at all, but permutes the points of $T$ and of $L$ around the binding circle so that $v$ and one of its two neighbours in the foliation are adjacent on $S^1_\phi$, not even separated by any points of $L$. Then we perform an isotopy supported in a regular neighbourhood of a leaf connecting $v$ to this neighbour, pushing the surface across $S^1_\phi$ and reducing its binding weight by two.

This modifies $\mathcal{F}$ in a regular neighbourhood of this leaf. The two vertices disappear and two poles are introduced, which we then cancel with the two nearby saddles. The overall effect can be thought of as gluing $v$ and its two neighbours together into a single vertex that inherits adjacencies to other vertices from the two neighbours. This is depicted at the top of Figure \ref{fig:foliationEffect}.

\begin{figure}
    \centering
    \includegraphics[width=0.5\linewidth]{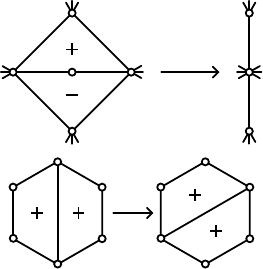}
    \caption{These are the changes that the foliation undergoes when we simplify our surface. Vertices are shown, and are connected by an edge if they are adjacent in $\mathcal{F}$. Each region has four vertices, and the sign of the saddle is indicated. The same changes can occur with saddle signs reversed.}
    \label{fig:foliationEffect}
\end{figure}

When we have two adjacent saddles of the same orientation, the squares they occupy in $\mathcal{F}$ together make a hexagon. We first push certain events through time, so that they are not in the way of a particular isotopy. The effect of this isotopy on $\mathcal{F}$ is to remove the edge between the two saddles and replace it with a different diagonal of the hexagon.

The hexagon is now split into two squares in a different way, though the signs of the two saddles remain the same. In particular, the two vertices that belonged to both squares originally have had their valence reduced. This change is depicted at the bottom of Figure \ref{fig:foliationEffect}. If one of these vertices had valence three, it now has valence two, and we can proceed to glue it and its neighbours together as described above.

\section{Admissible satellite tori}
\label{sec:admissibleTori}

In Section \ref{sec:admissible} we saw how, if certain patterns are present in the foliation $\mathcal{F}$ of an admissible surface, then it can be simplified, and as a result we can assume by isotoping that no such patterns are present.

In \cite{monotonicSimplification}, this is enough; the absence of these patterns in $\mathcal{F}$ forces the admissible surface to be in a desirable ``perfect" form which means that the associated grid diagram of the knot/link is of the kind desired.

For admissible satellite tori, this is not the case. There are plentiful examples of admissible tori which have no poles, no separatrices from a saddle to itself, no vertices of valence less than four, no vertices of valence four with adjacent coherently oriented saddles in their star, and yet are not in a ``perfect" position which forces the knot or link's grid diagram to be a satellite diagram. Therefore more methods are needed to simplify such tori.

First we will observe that while our simplified collection of tori is not necessarily perfect, it is nice in many ways (we say that it has \emph{standard foliation}). Next we will describe the ``perfect" position, which we call \emph{narrow position}, that we would like to eventually simplify the tori into. Thirdly we note an ``easy" simplification that can be made in some situations, and which we can therefore assume is unavailable.

Finally in this section we look at the example in \cite{kazantsev}, which is an explicit example of a satellite torus which is not in narrow position but is not simplifiable by the methods described in Section \ref{subsec:simplifyingMoves}, nor those described in this section.

\subsection{Standard Foliation}
\label{subsec:standardFoliation}

The simplifications from Section \ref{subsec:simplifyingMoves} may not always put our collection of satellite tori $T$ into the perfect form we need (which will be described in the next subsection), but it does let us make useful assumptions about our tori.

Once all simplifying moves have been applied, $\mathcal{F}$ has no poles, every separatrix leads from a saddle to a vertex, every tile is a square with four separatrices as its boundary, and there are no vertices of valence less than four.

Then $\mathcal{F}$ gives us a decomposition of $T$ into square tiles. Let the number of vertices, saddles, separatrices, and square tiles be $V$, $S$, $E$, and $F$ respectively, and let $V_k$ be the number of vertices of valence $k$. Since each separatrix is incident to exactly two squares, and each square is incident to four separatrices, $E = 2F$. By counting degrees at vertices and saddles, we find that $E = 2S + \sum_{k=1}^\infty \frac{k}{2} V_k$. Finally we have that $(V+S)-E+F=\chi(T)=0$.

Substituting $F=E/2$, we obtain $V+S-E/2=0$, and using our expression for $E$, we find 
\[
0 = V+S-(S+\sum_{k=1}^\infty \frac{k}{4} V_k) = \sum_{k=1}^\infty V_k - \sum_{k=1}^\infty \frac{k}{4} V_k = \sum_{k=1}^\infty (1-\frac{k}{4})V_k
\]
and therefore
\[
\sum_{k=1}^\infty (4-k)V_k = 0.
\]

As a consequence, if $T$ has no vertices of valence less than four, it also will not have any vertices of valence greater than four. Hence we can assume that our foliation $\mathcal{F}$ is a 4-regular bipartite graph between the vertices and the saddles. As a consequence, the number of vertices is equal to the number of saddles. Every vertex is connected to four saddles by four separatrices, and is therefore surrounded by four tiles, giving four adjacent vertices (although it is possible that these four may not all be distinct, as we shall see).

From the perspective of an evolving collection of arcs, this means that there are exactly as many events as vertices. Each event involves four vertices, and each vertex is involved in four events, so the other vertex to which it is connected changes four times as we go around $S_\theta^1$.

Due to Corollary \ref{cor:valence4AdjacentSaddles}, we may also assume that no two adjacent coherently oriented saddles will be in the star of any four-valent vertex. Since any two adjacent saddles are in the star of some vertex, and every vertex has valence four, this means that there is no pair of adjacent coherently oriented saddles anywhere. Regions of the foliation look like a square grid, with saddles' orientations varying in a checkerboard pattern. We say that a collection of tori in such a position has \emph{standard foliation}, and such a foliation is depicted in Figure \ref{fig:standardFoliation}.

\begin{figure}
    \centering
    \includegraphics[width=0.5\linewidth]{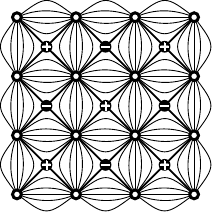}
    \caption{A small patch of a standard foliation. The empty circles are vertices, and the other circles are saddles, with a $+$ or a $-$ indicating their orientation.}
    \label{fig:standardFoliation}
\end{figure}

Our collection of satellite tori $T$ separates $S^3$ into several components. One is a link exterior of the companion link, while the others are solid tori $V_i$ which together contain $L$. When we draw diagrams of the intersection $T \cap D_t$, the arcs partition the disc $D_t$ into regions, and we shade the regions if they are in $V = \bigsqcup V_i$, and leave them unshaded if they are in $S^3 \setminus V$ (as in the upcoming Figures \ref{fig:thinFig8} and \ref{fig:kazantsev}).

We call a saddle's orientation positive if $V$ lies immediately to the future of the saddle point, and negative if $V$ lies immediately to its past. An event corresponding to a positive saddle involves two shaded regions merging to form a larger one, while an event corresponding to a negative saddle involves one shaded region splitting into two smaller ones.

The fact that the saddles around a given vertex alternate in orientation means that the four events involving a particular vertex alternate (as time progresses) between merging and splitting of shaded regions. At times this will be useful for our arguments; if a vertex has just undergone a negative event, for instance, the next time the arc $a$ from this vertex changes will be a positive event, and so we know the direction in which this arc must interact (that is, the next arc $a$ will ``merge and split" with must be on the side of $a$ on which $a$ bounds an unshaded region, so that the event will result in two shaded regions joining together).

Of particular interest will be arcs that join two vertices that are adjacent on $S_\phi^1$. We call these \emph{semicircles}, since they separate a small semicircular region from the rest of the page, and this region contains no other arcs or vertices. We may further specify that an arc is a \emph{shaded} or \emph{unshaded} semicircle depending on whether this semicircular region is contained in $V$ or not.

If $T$ has binding weight $2n$ (where $n \geq 2$), then every page contains $n$ arcs, at least two of which must be semicircles (since there must be some ``outermost" arcs on either side of any given arc). The $2n$ vertices partition the binding circle into $2n$ \emph{segments}, of which $n$ lie in $V$, while the other $n$ lie outside it. We say a segment \emph{has a semicircle} if there is some time $t$ at which the endpoints of this segment are joined by an arc (which will be a semicircle). We call a segment shaded or unshaded depending on whether it lies in $V$ or not.

The link $L$ is contained in $V$, so every point of $L \cap S_\phi^1$ lies in some shaded segment. Each arc of $L$ can be treated as an event where an arc between two points of $L \cap S_\phi^1$ appears and immediately disappears at a time $t$. The fact that $L \subseteq V$ means that such an arc will be contained in one of the shaded regions of the page $D_t$. We can draw these arcs in our diagrams if we wish, such as the red arcs in Figure \ref{fig:kazantsev}.

Sometimes it will be convenient to consider a graph in the foliation $\mathcal{F}$ whose nodes are the vertices, and where each square tile gives a corresponding edge connecting the two vertices in its boundary. Note that this graph will be bipartite, between vertices with $V$ immediately clockwise of them on $S_\phi^1$, and vertices with $V$ immediately counter-clockwise of them on $S_\phi^1$. This graph may not be simple, and in fact as we will see, when $T$ is in the ideal position many pairs of vertices are connected by a pair of edges.

\subsection{Narrow Position}
\label{subsec:narrowPosition}

The collections of tori we are concerned with are those that satisfy all the conditions outlined in Section \ref{subsec:standardFoliation}, since those are the collections of tori we do not already know how to simplify.

Our goal is to have $T$ be in \emph{narrow position}, by which we mean that it is the boundary of a small regular neighbourhood of an arc presentation of the companion link. An example is given in Figure \ref{fig:thinFig8}, and a view of how such a torus looks in $S^3$ is given in Figure \ref{fig:3DNarrowPosition}.

\begin{figure}
    \centering
    \includegraphics[width=0.7\linewidth]{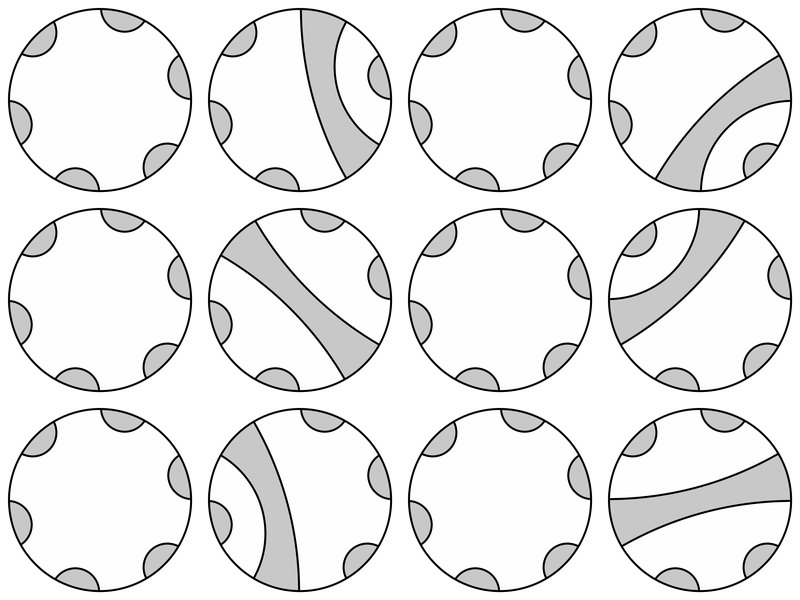}
    \caption{A diagram of a torus in narrow position. The grid should be read from left to right, then top to bottom. Each layout differs from the previous by one saddle-event. This torus is following an arc presentation of a figure-eight knot, of arc index 6.}
    \label{fig:thinFig8}
\end{figure}

\begin{figure}
    \centering
    \includegraphics[width=0.5\linewidth]{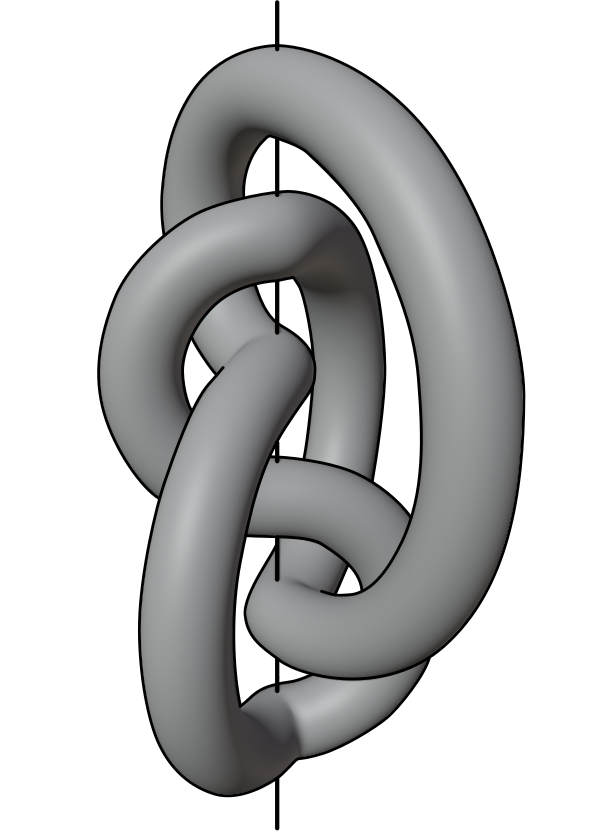}
    \caption{A torus in narrow position, following an arc presentation of the figure-eight knot of arc index 6, meeting the binding circle at 12 points.}
    \label{fig:3DNarrowPosition}
\end{figure}

The specific form we desire is one in which the layout of a page alternates between consisting entirely of shaded semicircles and consisting of many shaded semicircles and a single shaded ``band" which connects two shaded segments. Since our collection of tori has standard foliation, if this is the case, each shaded segment will be involved in exactly two bands, and we can take a collection of core curves of the $V_i$ to be an arc presentation $L_C$ of the companion link whose intersection with $S_\phi^1$ has exactly one point in each shaded segment.

The reason we consider this configuration perfect and want to achieve it is because it has the property that if $L$ is an arc presentation of a link lying in $V$, then (after an easy isotopy of $L$) the associated grid diagram is a satellite diagram that follows a grid diagram of the companion link (specifically the grid diagram corresponding to $L_C$).

This is the case because if we isotope $(L,T)$ (or deform our $\phi$ and $\theta$ coordinate system) so that the shaded segments are very short, and the bands exist for very short times, then the arc presentation $L$ has the property that any vertex of $L$ must have a $\phi$ coordinate very close to that of a vertex of $L_C$, and any arc of $L$ passing between vertices of $L$ that are close to different vertices of $L_C$ must have a $\theta$ coordinate very close to that of the appropriate arc of $L_C$. Further, if we push all arcs of $L$ that occur inside a semicircle through time so that they occur at a time when the segment is part of a band instead, we get a grid diagram of $L$ such that every corner of the diagram is very close to a corner of the diagram of $L_C$. Thus the grid diagram is a satellite diagram.

Therefore, to prove Theorem \ref{thm:satelliteDiagram}, it suffices to show how $(L,T)$ may be isotoped so that $T$ is in narrow position while maintaining the bound on the arc index of $L$.

\subsection{An easy simplification}
\label{subsec:emptySemis}

If some segment of $S_\phi^1$ has a semicircle but also contains no points of the link $L$, then by isotoping $T$ across (a regular neighbourhood of) a semicircular region disjoint from $L$ which connects it to $S_\phi^1$, we reduce the binding weight of $T$ by two. Such a semicircular region can be found living in any page during the time when a semicircle arc is joining the endpoints of the segment. The isotopy leaves $L$ unchanged.

In particular, since unshaded segments contain no points of $L$, the presence of any unshaded semicircle implies that such a simplifying move is available.

Thus, by applying this move (and other previously mentioned moves) as much as possible, we may assume that there are no unshaded semicircles, and that no shaded semicircles are ``empty" in the sense that their segment contains no points of $L$.

\subsection{Kazantsev's example}

Unfortunately, even if we assume that $T$ has standard foliation and is not susceptible to the moves discussed in Section \ref{subsec:emptySemis}, this does not imply that $T$ is in narrow position. The following example (in which $L = K$ is a knot and $T$ has one component), found by Kazantsev, is detailed in \cite{kazantsev}.

The torus has standard foliation, with 22 vertices and 22 saddles. A diagram of its pages is given in Figure \ref{fig:kazantsev}. On this diagram we include the arcs of the knot $K$ inside $V$. The torus $T$ is knotted in a trefoil, and $K$ is a cable of the trefoil with arc index 11.

\begin{figure}
    \centering
    \includegraphics[width=0.9\linewidth]{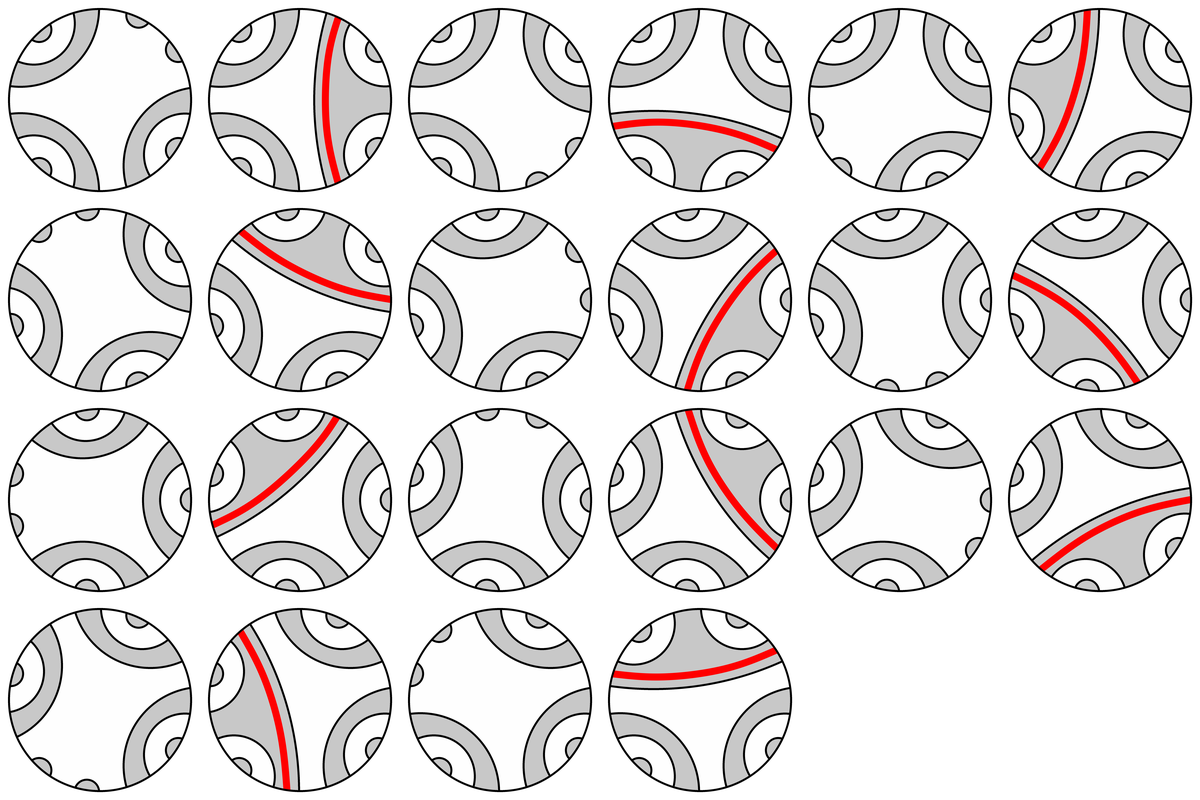}
    \caption{Kazentsev's example of a satellite torus that is not in narrow position but is resistant to being simplified without increasing the arc index of $K$. Arcs of $K$ are shown in red.}
    \label{fig:kazantsev}
\end{figure}

From the diagram it can be seen that there are no unshaded semicircles, and every shaded segment that has a semicircle (which is all of them) is visited by $K$. Nevertheless, $T$ is not in narrow position. Indeed, the grid diagram corresponding to $K$ is not a satellite diagram.

Not only that, but this grid diagram cannot be transformed by elementary moves into a satellite diagram without raising its arc index during the process (in fact the only moves possible without stabilising are cyclic permutations). All simplifying moves we have seen so far modify $K$ by some sequence of elementary moves that involves no stabilisations, so they are not up to the task of simplifying this $(K,T)$ into narrow position.

A vital part of our argument is the existence of a method of simplifying which is available even in dire situations like this, though it may incur great cost to the arc index of $L$. This method is described in Section \ref{sec:noRepeatedArcs}. The method will not put $T$ directly into narrow position, but will reduce it to a form which we will see in Section \ref{sec:repeatedArcs} poses no issue, and can be simplified via other methods which do not harm the arc index of $L$.

\section{Tori with repeated arcs}
\label{sec:repeatedArcs}

Some admissible tori with a standard foliation have \emph{repeated arcs}, by which we mean there exists an arc between two vertices that is formed at some time $t_1$, then subsequently disappears at some time $t_2$, but which then appears and disappears \emph{again} at times $t_3$ and $t_4$ before we return to the page at $t_1$. This is equivalent to the existence of a double edge between two vertices of the foliation (that is, there are two square tiles of $\mathcal{F}$ with both vertices in their boundary). This is a desirable property for a torus to have, as this property is possessed by every component of a collection of tori in narrow position, which is what we hope to eventually reach.

Throughout this section we make use of the graph of vertices of $\mathcal{F}$, saying that two vertices are connected by an edge if they are incident to a common tile, or equivalently if an arc between them is present at some time.

In this section we will see that if $T$ has standard foliation and is such that every component has repeated arcs, then if $T$ is not already in narrow position, we may simplify it into narrow position at no cost to the arc index of the satellite link inside of it.

First we will examine each torus individually and decide that since its foliation is standard and it has repeated arcs, then in fact it already has the property that every shaded segment has a repeated semicircle. It will follow that the union of the tori can be isotoped into narrow position simply by interchanging events.

\begin{lemma}
If an admissible torus with a standard foliation and $2n$ vertices has a repeated arc, then in fact every vertex is the endpoint of some repeated arc. If $n>2$, then every vertex is the endpoint of \emph{exactly one} repeated arc.
\end{lemma}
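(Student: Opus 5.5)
The plan is to pass to the universal cover and treat the foliation as a quotient of the standard square grid. Since $T$ has standard foliation, the separatrices give a $4$-regular bipartite quadrangulation of the torus: every vertex and every saddle has valence four, and every tile has two vertices and two saddles as opposite corners. Lifting to $\mathbb{R}^2$, I would identify the lifted cell structure with the standard one. Put vertices at the integer points $(i,j)$ with $i+j$ even and saddles at those with $i+j$ odd, so that the tiles are the unit squares. Each tile then joins two diagonally opposite vertices, and the vertex graph used in this section is the rotated grid with edge vectors $\pm(1,1)$ and $\pm(1,-1)$. The torus is $\mathbb{R}^2/\Lambda$ for a lattice $\Lambda$ of deck translations, and $\Lambda$ must preserve the structure. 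It preserves vertices ($a+b$ even for $(a,b)\in\Lambda$). It also preserves the checkerboard pattern of saddle orientations: around a vertex the saddles alternate in sign, so the sign of a saddle $(i,j)$ is given by the parity of $i$, which forces $a$ to be even. Hence $\Lambda\subseteq 2\mathbb{Z}\times 2\mathbb{Z}$. The number of vertices is $2n=\operatorname{covol}(\Lambda)/2$.

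A repeated arc at a vertex $u$ means that two of the four tiles around $u$ have the same second vertex. The four tiles at a lift $\tilde u$ are determined by the four edge vectors $e$, and two of them, with vectors $e\neq e'$, share their second vertex exactly when $e-e'\in\Lambda$. The set of possible differences is
\[
D=\{\pm(2,0),\pm(0,2),\pm(2,2),\pm(2,-2)\}.
\]
Whether $D\cap\Lambda$ is nonempty does not depend on $u$. So if one vertex has a repeated arc, every vertex does, which proves the first claim immediately.

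For the second claim, I would first note that two non-parallel vectors of $D$ span a lattice of covolume at most $8$. If $\Lambda$ contained two such vectors, then $\operatorname{covol}(\Lambda)\le 8$ and $2n\le 4$, i.e.\ $n\le 2$. So when $n>2$, the set $\Lambda\cap D$ is a single pair $\pm d$. There are two cases, $d$ diagonal or $d$ axis-parallel.

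\begin{itemize}
\item If $d=\pm(2,2)$ (or $\pm(2,-2)$), the only pair of edge vectors with difference $d$ is $e=(1,1)$, $e'=(-1,-1)$ (respectively $(1,-1)$, $(-1,1)$). So $u$ has exactly one repeated arc, to the vertex $u+(1,1)$ (respectively $u+(1,-1)$), as required.
\item If $d=\pm(2,0)$ (or $\pm(0,2)$), then both pairs $(1,1),(-1,1)$ and $(1,-1),(-1,-1)$ have difference $d$. In that case $u$ would have two repeated arcs, to two distinct vertices.
\end{itemize}

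The main obstacle is therefore to rule out the axis-parallel case using information beyond the combinatorics of the tiling. My first attempt would be to use the time function and the shading. In the axis-parallel case the vertex graph is a single cycle in which every edge is doubled, and the closed curves formed by the pairs of leaves all lie in one homology class. I would analyse the event sequence at $u$: $u$ is joined alternately to the two neighbours $v=u+(1,1)$ and $v'=u+(1,-1)$, and the events at $u$ alternate between positive and negative. I would then try to show that this forces the arcs $uv$ and $uv'$ to cross in some page, or forces an unshaded region to be enclosed in a way that is incompatible with the bipartition of the vertices by shading side. Failing that, I would try to show that such a torus bounds a solid torus on the $V$ side that is unknotted, or admits a compressing disc, contradicting the satellite hypothesis. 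Because this step relies on geometric input, it is the one I am least sure of, and I would check it on the narrow-position example of Figure~\ref{fig:thinFig8} to confirm that the diagonal case is the one realised there.
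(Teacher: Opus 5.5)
\textbf{The first claim is proved; the second is not.} Your lattice argument for the first claim is correct. It is a genuinely different, global route from the paper's local propagation argument. Because the deck translations act on all of $\mathbb{R}^2$, whether a vertex has a repeated arc cannot depend on the vertex, so you never need to know how the two tiles of a double edge sit around $v$.

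For the second claim you leave the axis-parallel case $d=\pm(2,0),\pm(0,2)$ open, and the strategies you sketch are speculative. Your description of that case is also inaccurate. Take $(2,0)\in\Lambda$. In cyclic order around $\tilde u$, which is the time order of the arc leaving $u$, the tiles $(1,1),(-1,1),(-1,-1),(1,-1)$ have second vertices $v,v,v',v'$. So the arc does not alternate between $v$ and $v'$: the repeated neighbour occupies two \emph{consecutive} tiles. The case $(0,2)\in\Lambda$ behaves the same way.

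\textbf{The missing ingredient} is a local fact, and it is exactly the key step of the paper's proof: the four separatrices of a saddle end at four distinct vertices. All four lie in the single page containing the saddle. Since $T$ meets the binding circle transversely, exactly one leaf of each page emanates from a given vertex. Equivalently, in an event $v_1v_2, v_3v_4 \to v_1v_4, v_2v_3$, the arc at each of the four vertices changes its destination.

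In your coordinates, the saddle at $\tilde u+(0,1)$ has separatrices ending at $\tilde u$, $\tilde u+(1,1)$, $\tilde u+(-1,1)$ and $\tilde u+(0,2)$. Distinctness of these four vertices forces $(2,0)\notin\Lambda$ and $(0,2)\notin\Lambda$. This rules out the axis-parallel case outright. The paper phrases the same step as: the two edges from $v$ to $w$ cannot be adjacent at $v$, since the arc $vw$ would then survive an event it is involved in.

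No appeal to knottedness, compressing discs, or crossing arcs is needed. Once this is added, $\Lambda\cap D\subseteq\{\pm(2,2),\pm(2,-2)\}$, and your covolume argument finishes the proof: if both diagonal pairs lie in $\Lambda$ then $2n\le 4$. This matches the paper's closing observation that the foliation would then have only four vertices.
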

\begin{proof}
Suppose there is a repeated arc $\alpha$ with endpoints $v$ and $w$, and some vertex $a$ other than $w$ is adjacent to $v$ in the foliation. We will show that $a$ is also an endpoint of a repeated arc. Thus the property of being an endpoint of a repeated arc spreads to neighbours in the foliation. Since our torus is connected, if one vertex has this property (as we hypothesise), then all vertices do.

In the foliation $\mathcal{F}$ the vertex $v$ has four edges emerging from it. Two of these lead to $w$. It cannot be the case that these two edges are adjacent at $v$, because this would indicate that the arc $vw$ is involved in an event but is still present immediately afterwards, which is not possible. Thus the two edges leading to $w$ must be opposite each other at $v$, and similarly they must be opposite each other at $w$. So we have the configuration shown in Figure \ref{fig:vwFoliation}.

\begin{figure}
    \centering
    \includegraphics[width=0.4\linewidth]{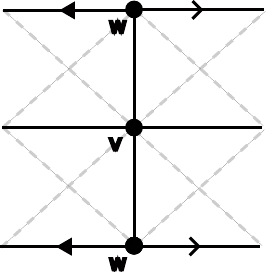}
    \caption{The region surrounding $v$ and $w$ in $\mathcal{F}$. Edges between vertices are in bold, separatrices are dotted and cross at saddle points.}
    \label{fig:vwFoliation}
\end{figure}

The vertex $a$ is the other endpoint of one of the horizontal edges from $v$. Without loss of generality we can take it to be the one on the right. We can see that two opposite edges emerging from $a$ both lead to the same vertex, call it $b$, which is adjacent in $\mathcal{F}$ to $w$. Thus $a$ is an endpoint of a repeated arc, and we have the result.

Finally, note that if the two edges emerging from $v$ that did not lead to $w$ went to the same vertex, then the entire foliation is only $4$ vertices. So if $n>2$, this does not occur, and $v$ is the endpoint of only one repeated arc. This is true of every other vertex as well by the same argument.
\end{proof}

It should be noted that $n \leq 2$ is not a scenario we are concerned with, as such an admissible torus cannot be knotted.

By repeating the argument from the preceding proof, we see that $\mathcal{F}$ has the form shown in Figure \ref{fig:fullFoliation}.

\begin{figure}
    \centering
    \includegraphics[width=0.8\linewidth]{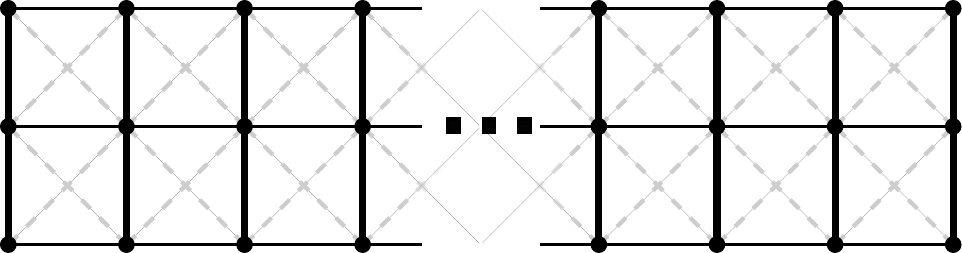}
    \caption{All of $\mathcal{F}$. The repeated arcs are vertical and are bolded. The upper and lower sides are identified by a vertical translation. The left and right circles are identified via a horizontal translation (followed by a half-twist if $n$ is odd).}
    \label{fig:fullFoliation}
\end{figure}

From this we can conclude the following:

\begin{itemize}
    \item There are $n$ repeated arcs, joining the vertices together in pairs.
    \item The repeated arcs form a loop. That is, we can label them $\alpha_0$, $\alpha_1$, ..., $\alpha_{n-1}$, $\alpha_n = \alpha_0$ in the order that they are encountered when travelling around the torus the ``long way". Then we can see from the foliation that every event is either the arcs $\alpha_i$ and $\alpha_{i+1}$ joining together, or the arcs $\alpha_i$ and $\alpha_{i+1}$ separating from each other.
\end{itemize}

Since every pair of adjacent saddles in $\mathcal{F}$ has opposite orientations, (since the foliation is standard), then for each $i$, the arcs $\alpha_{i-1}$ and $\alpha_{i+1}$ are on the same side of $\alpha_i$, and we call this the \emph{interactive side of $\alpha_i$} (because it is the side on which events involving $\alpha_i$ occur). This is because if the arc $\alpha_i$ appears after a negative (resp. positive) event, the next event involving it will be positive (resp. negative) and the signs of these events tell us which side of $\alpha_i$ the event will occur on.

The following terminology will be useful for the next proofs:

\begin{itemize}
    \item We say a repeated arc $\alpha$ is \emph{nested inside} another repeated arc $\beta$ if $\alpha$ is contained on the non-interactive side of $\beta$.
    \item We say a repeated arc $\alpha$ \emph{interleaves} with another repeated arc $\beta$ if the two endpoints of $\alpha$ are on opposite sides of $\beta$ (or equivalently, the two endpoints of $\beta$ are on opposite sides of $\alpha$).
    \item We say a repeated arc $\alpha$ \emph{interacts towards} another repeated arc $\beta$ if $\beta$ is on the interactive side of $\alpha$.
\end{itemize}

\pagebreak
\begin{lemma}
    Suppose we have a knotted admissible torus with standard foliation that has repeated arcs, so that all of the above observations apply. Now suppose that there exist two interleaving repeated arcs $ab$ and $cd$ so that we have (possibly a mirror image of) the configuration on the left of Figure \ref{fig:interleave}. Then there are repeated arcs $a_+b_+$ and $c_+d_+$ that interact with $ab$ and $cd$ respectively such that when we traverse the binding circle in some direction starting at $a$, we encounter $a,c_+,c,b,b_+,d,d_+,a_+$ in that order. That is, we have (possibly a mirror image of) the configuration shown on the right of Figure \ref{fig:interleave}.
\end{lemma}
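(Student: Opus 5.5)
Since Figure~\ref{fig:interleave} is not reproduced here, I will read the left-hand configuration as: the repeated arcs $ab$ and $cd$ interleave, and they interact towards each other (each one's interactive side contains the other). The plan is to use the loop structure of the repeated arcs. In the cyclic order $\alpha_0,\dots,\alpha_{n-1}$, the arc $ab$ has two neighbours, which at different times merge with and split from $ab$. Both neighbours lie on the interactive side of $ab$. Take $a_+b_+$ to be one of them, and $c_+d_+$ one of the neighbours of $cd$. The content of the lemma is that these neighbours can be chosen so that their endpoints sit in the prescribed positions $a,c_+,c,b,b_+,d,d_+,a_+$ around $S^1_\phi$.

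\textbf{Step 1: placement of $a_+b_+$.} Consider an event in which $ab$ merges with, or splits from, a neighbour $a_+b_+$. In this event the four endpoints $a,b,a_+,b_+$ are the corners of a saddle, and the two arcs are disjoint in a page. So $a_+b_+$ lies in a single component of the complement of $ab$, namely its interactive side, and it does not interleave with $ab$. The event therefore replaces $ab, a_+b_+$ by $ab_+, a_+b$ or by $aa_+, bb_+$. Since the foliation is standard, one of these happens at the merge and the other at the split.

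\textbf{Step 2: use disjointness from $cd$.} At the time of that event, the arc $cd$ (or whatever arc then occupies $c$ and $d$) must be disjoint from $ab$ and from the new arcs. However, $ab$ and $cd$ interleave, so they are never simultaneously present. The useful tool is the time ordering. Between consecutive appearances of $ab$, the arc $cd$ must appear. The repeated arcs $\alpha_i$ and $\alpha_{i+1}$ are present together except during their brief joinings, so at any time all repeated arcs not currently merged are present and pairwise disjoint. This is where the real constraint comes from. I would look at the moment just after $ab$ splits from $a_+b_+$, when $ab$ and $a_+b_+$ are both present. Whichever arc then ends at $c$ must avoid $ab$.

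Tracking the interactive sides, I would argue as follows. The arc $cd$ lies on the interactive side of $ab$, and the arcs adjacent to $ab$ in the loop also lie there. Ordering the binding circle from $a$, the endpoints $b_+$ and $a_+$ must lie in the arcs of the circle between $b$ and $d$, and between $d$ and $a$, respectively. They cannot lie between $a$ and $c$ or between $c$ and $b$. The reason is that the saddle merging $ab$ with $a_+b_+$ produces the arcs $ab_+$ and $a_+b$ (or $aa_+$ and $bb_+$), and these would have to be disjoint from the neighbours of $cd$ that are present at that time. The symmetric argument with the roles of $ab$ and $cd$ exchanged places $c_+$ between $a$ and $c$, and $d_+$ between $d$ and $a_+$. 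Choosing the correct neighbour of each arc, and possibly taking a mirror image, gives the order stated in the lemma.

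\textbf{Main obstacle.} The hard step is Step~2: showing that the neighbours cannot sit in the ``wrong'' gaps between the endpoints. I expect this to need a case analysis of which neighbour of $ab$ precedes which, using the alternation of saddle signs. I would first try to show that if $a_+b_+$ lay between $a$ and $c$, then $a_+b_+$ would be nested with $cd$ on the wrong side. That would force two adjacent saddles with the same orientation, or an unshaded semicircle, or an unknotted torus, each of which contradicts the standing assumptions.
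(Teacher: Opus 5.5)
There is a genuine gap in your Step 2, and it is the main idea of the lemma. Everything depends on knowing which arcs are present at the same time. For that you need the relative time order of the presence intervals $(t_1,t_2)$, $(t_3,t_4)$ of $ab$ and $(s_1,s_2)$, $(s_3,s_4)$ of $cd$. Because $ab$ and $cd$ interleave, these four intervals are pairwise disjoint. So, up to relabelling, there are two possibilities:
\begin{itemize}
\item they alternate, $t_1<t_2<s_1<s_2<t_3<t_4<s_3<s_4$; or
\item both appearances of $cd$ fall in a single absence of $ab$, $t_1<t_2<t_3<t_4<s_1<s_2<s_3<s_4$.
\end{itemize}
Your sentence ``between consecutive appearances of $ab$, the arc $cd$ must appear'' is either vacuous or asserts exactly the alternating case, which is the one that must be \emph{ruled out}.

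The paper rules it out topologically, not from local foliation data. Take the arc $ab$ in a page of $(t_1,t_2)$ and the arc $ba$ in a page of $(t_3,t_4)$. This loop is essential on the torus and bounds a disc in $S^3$, made of two half-discs in those pages. Since the torus is knotted, the loop must be a meridian: any other slope gives the companion or a cable of it, which cannot bound a disc. The same holds for $cd$. In the alternating case the $cd$-loop crosses the $ab$-disc exactly once, at whichever of $c,d$ lies on the binding segment in that disc. So two disjoint meridians of the torus would be linked, which is impossible. You mention knottedness only as one possible source of contradiction, with no mechanism. The local contradictions you plan to aim for (adjacent coherently oriented saddles, unshaded semicircles, the latter not even a hypothesis of the lemma) give no handle on this ordering question.

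Once the non-alternating order is known, the choice of neighbour is forced; this is what your ``choosing the correct neighbour'' leaves unspecified.
\begin{itemize}
\item $a_+b_+$ must be the neighbour with which $ab$ is merged during the absence $(t_4,t_1)$ that contains all of $cd$'s activity. Then $aa_+$ and $bb_+$ are present throughout $[s_1,s_4]$.
\item Symmetrically, $c_+d_+$ is the neighbour merged with $cd$ during $(s_4,s_1)$.
\end{itemize}
Now $cd$, $c_+d_+$, $c_-d_-$, $cc_\pm$, $dd_\pm$ each coexist at some time with $aa_+$ and $bb_+$, so none of them can interleave $aa_+$ or $bb_+$. Symmetrically, $ab$, $a_\pm b_\pm$, $aa_\pm$, $bb_\pm$ cannot interleave $cc_+$ or $dd_+$. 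These non-interleaving constraints together force the order $a,c_+,c,b,b_+,d,d_+,a_+$, up to mirror image.

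In your Step 2 you instead appeal to ``the neighbours of $cd$ that are present at that time'' at the merge of $ab$ with an unspecified neighbour. You have no way of knowing which arcs are present then, so the placement of $a_+,b_+,c_+,d_+$ is asserted rather than derived. A minor point: for interleaving arcs, ``each one's interactive side contains the other'' is not meaningful, since each side of $ab$ contains exactly one endpoint of $cd$.
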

\begin{figure}
    \centering
    \includegraphics[width=0.8\linewidth]{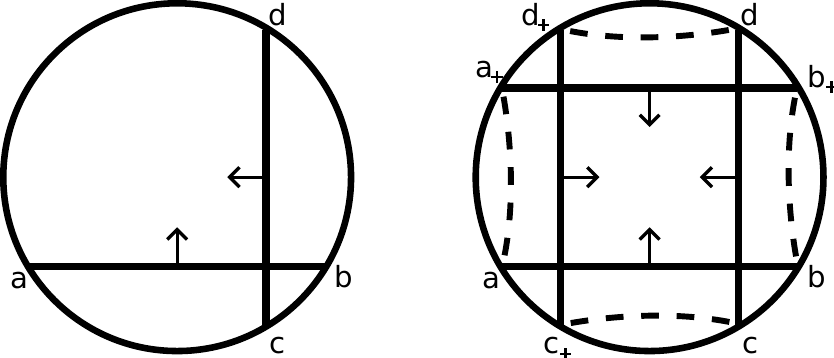}
    \caption{If the configuration on the left is present, then the configuration on the right is too. Repeated arcs are solid, while non-repeated arcs are dotted. Each repeated arc has an arrow pointing to its interactive side.}
    \label{fig:interleave}
\end{figure}
\begin{proof}
    Since the arc $ab$ is repeated, it is present during two disjoint time intervals $(t_1,t_2)$ and $(t_3,t_4)$. Similarly, $cd$ is present during two disjoint time intervals $(s_1,s_2)$ and $(s_3,s_4)$. Since $ab$ and $cd$ interleave, they cannot coexist at any time, so all four of these intervals are pairwise disjoint. Without loss of generality (by choosing a start time, and potentially swapping the labels of the two intervals for an arc), we have either $t_1<t_2<s_1<s_2<t_3<t_4<s_3<s_4$ in the case that $cd$ appears once during each period of $ab$'s absence, or $t_1<t_2<t_3<t_4<s_1<s_2<s_3<s_4$ in the case that $cd$ appears twice during a single period of $ab$'s absence.

    Suppose for a contradiction that the first case occurs. Consider a loop formed by the arc from $a$ to $b$ at some time in $(t_1,t_2)$ followed by the arc from $b$ to $a$ at some time in $(t_3,t_4)$. We see from the foliation that this curve is non-trivial on the torus, but it bounds a disc in $S^3$ (the disc made from a half disc in each of the two pages containing these arcs). Thus it is a meridian curve, since otherwise the curve, regarded as a knot, would be either equivalent to the core curve or a cable of the core curve, and in either case it could not bound a disc, because the torus is knotted. Similarly by taking an arc from $c$ to $d$ at some time in $(s_1,s_2)$ and another from $d$ to $c$ at some time in $(s_3,s_4)$, we obtain another meridian curve. But since we assumed $t_1<t_2<s_1<s_2<t_3<t_4<s_3<s_4$, these two meridian curves are linked. This is impossible, since if we take a meridian disc properly embedded in the solid torus bounded by one of these curves, the other curve would have to pierce the interior of this disc despite being contained in the torus itself while the interior of the disc is disjoint from the torus. Thus the first situation does not occur, and it is the case that $t_1<t_2<t_3<t_4<s_1<s_2<s_3<s_4$.

    During $(t_4,t_1)$, the repeated arc $ab$ is interacting with some other repeated arc, call its vertices $a_+$ and $b_+$ (so that $a$ joins with $a_+$ and $b$ joins with $b_+$). Call the other arc that $ab$ interacts with (during $(t_2,t_3)$) $a_-b_-$ in the same way. Similarly, let $c_+d_+$ be the arc that $cd$ is interacting with during $(s_4,s_1)$, and $c_-d_-$ be the other arc that $cd$ interacts with.

    The following arcs cannot interleave with $aa_+$ or $bb_+$ since they coexist at some time: $cc_+$, $dd_+$ ,$cd$, $c_+d_+$ ,$c_-d_-$, $cc_-$, and $dd_-$.
    Similarly, none of $aa_+$, $bb_+$, $ab$, $a_+b_+$, $a_-b_-$, $aa_-$, and $bb_-$ can interleave with $cc_+$ or $dd_+$, since they coexist at some time.

    These restrictions, taken together, force the configuration on the right side of Figure \ref{fig:interleave} to appear (or a mirror image of this configuration, if a mirrored version of the left-hand side was present).
\end{proof}

\begin{lemma}
    \label{lem:repeatedArcToriAreNice}
    Suppose again we have a knotted admissible torus with standard foliation that has repeated arcs, so that all of the above observations apply. Then every repeated arc is a semicircle.
\end{lemma}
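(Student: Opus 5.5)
The plan has two stages. First, show that no two repeated arcs interleave. Second, use the resulting non-crossing structure, together with the interactive sides, to force every repeated arc to cut off an empty segment of $S_\phi^1$, i.e.\ to be a semicircle.

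For the first stage, suppose $ab$ and $cd$ are interleaving repeated arcs. By the preceding lemma there are repeated arcs $a_+b_+$ and $c_+d_+$ in the configuration on the right of Figure \ref{fig:interleave}. Reading off the cyclic order $a,c_+,c,b,b_+,d,d_+,a_+$, one checks that $a_+b_+$ and $c_+d_+$ again interleave: $c_+$ and $d_+$ separate $b_+$ from $a_+$. So the lemma applies again, with the roles of the two pairs appropriately relabelled. In the loop labelling $\alpha_0,\dots,\alpha_{n-1}$, if $ab=\alpha_i$ and $cd=\alpha_j$ then $a_+b_+=\alpha_{i\pm1}$ and $c_+d_+=\alpha_{j\pm1}$, and we obtain an infinite sequence of interleaving pairs. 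To get a contradiction I would attach a monotone quantity to each pair and show it strictly increases at every step, so the sequence can never cycle; but since there are only finitely many repeated arcs, it must cycle. The first candidate is the number of vertices on the side of $cd$ containing $a$; an alternative is a nesting statement, that the new pair is strictly outside the old one in the cyclic picture. Checking which quantity is genuinely monotone, including in the mirrored configurations, is where I expect the main difficulty, and it may need a case check on the direction in which the indices advance.

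Granting that there is no interleaving, the $n$ repeated arcs form a non-crossing perfect matching of the $2n$ vertices on the binding circle. Suppose some $\alpha_i$ is not a semicircle. Then one of its two sides contains vertices, hence whole repeated arcs, by non-interleaving. On the interactive side lie $\alpha_{i-1}$ and $\alpha_{i+1}$. I would first treat the case where the non-interactive side of $\alpha_i$ contains vertices. Take a repeated arc $\beta$ nested inside $\alpha_i$. Follow the loop $\alpha_{i+1},\alpha_{i+2},\dots$ until it reaches $\beta$; at some step it passes from an arc outside $\alpha_i$ to an arc nested inside $\alpha_i$. Such consecutive arcs $\alpha_j,\alpha_{j+1}$ are merged by an event, producing arcs that interleave with $\alpha_i$, so this must happen while $\alpha_i$ is absent. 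I would then use the meridian-curve argument from the previous proof to derive a contradiction. The loop formed by the two occurrences of $\alpha_i$ is a meridian; so is the loop formed from $\alpha_j$ (or $\alpha_{j+1}$). The timing forced above should make these two meridians linked, exactly as in the excluded case $t_1<s_1<t_3<s_3$ of the previous lemma, which is impossible.

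It remains to handle the case where the non-interactive side of $\alpha_i$ is empty but the interactive side is not a semicircle region. Since the interactive side of $\alpha_i$ contains at least the four endpoints of $\alpha_{i\pm1}$, the only way $\alpha_i$ can fail to be a semicircle with an empty non-interactive side is if its two endpoints are not adjacent on that side. But then the empty side is itself a single segment, and $\alpha_i$ is a semicircle with respect to it. So the conclusion reduces to showing that the non-interactive side is always empty, which is exactly the content of the previous case. In summary, the two key steps are the no-interleaving propagation argument and the meridian-linking argument for nested arcs; the former's monotone quantity is the step I expect to be hardest.
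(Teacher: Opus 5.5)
Both stages have a genuine gap. In each case the tool you propose cannot work at all; it is not just a matter of checking details.

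\textbf{First stage.} Iterating the preceding lemma only returns you to where you started. Read the right-hand configuration of Figure~\ref{fig:interleave} starting from $b_+$: you get $b_+,d,d_+,a_+,a,c_+,c,b$. This is exactly the lemma's conclusion for the interleaving pair $(a_+b_+,c_+d_+)$, with $ab$ and $cd$ as its ``plus'' arcs. These are also the arcs the lemma's proof actually produces, because $a_+$ is joined to $a$ throughout the gap $(t_4,t_1)$, which contains both appearances of $c_+d_+$. So your sequence is the $2$-cycle $(\alpha_i,\alpha_j)\leftrightarrow(\alpha_{i\pm1},\alpha_{j\pm1})$, and no strictly monotone quantity can exist.

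The paper never proves non-interleaving on its own. It takes $m$ minimal such that some $\alpha_{i+m}$ interleaves with, or is nested inside, $\alpha_i$. One application of the lemma, together with minimality of $m$, pins down $a_+b_+=\alpha_{i-1}$ and $c_+d_+=\alpha_{i+m+1}$. Then the \emph{other} neighbour $c_-d_-=\alpha_{i+m-1}$ is forced to be nested inside $\alpha_{i-1}$. So interleaving at gap $m$ is converted into nesting at the same gap, and everything reduces to nesting.

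\textbf{Second stage.} The meridian-linking trick only detects spatially interleaving arcs. Suppose $\alpha_j$ does not interleave $\alpha_i$. Take the two half-discs in $D_t$ and $D_{t'}$ cut off by $\alpha_i$ on the side away from the endpoints of $\alpha_j$. Their union is a disc bounded by the $\alpha_i$-meridian that misses the $\alpha_j$-meridian entirely, for every choice of $t,t',s,s'$. So the two meridians are split whatever the timing, and nothing is contradicted. The arcs created when $\alpha_j$ and $\alpha_{j+1}$ merge do cross $\alpha_i$, but they are not repeated and do not close up into meridians.

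The missing ingredient is the paper's combinatorial descent on nesting. If $\alpha_{p+m}$ is nested inside $\alpha_p$, minimality of $m$ forces two things:
\begin{itemize}
\item $\alpha_{p+m}$ interacts towards $\alpha_p$;
\item $\alpha_{p+m-1}$ is nested inside $\alpha_{p-1}$. The reason is that the event joining $\alpha_{p+m}$ to $\alpha_{p+m-1}$ must occur while $\alpha_p$ is joined to a neighbour $\alpha_{p\pm1}$, and $p+1$ is ruled out.
\end{itemize}
Iterating gives an infinite chain of repeated arcs, each nested inside and interacting towards the next. The number of vertices on the non-interactive side then strictly increases along the chain, which is impossible.

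Your closing step, that an empty non-interactive side makes $\alpha_i$ a semicircle, is correct.
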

\begin{proof}
    Suppose for a contradiction that there are repeated arcs that interleave with each other or are nested inside each other. As before, say the torus has $2n$ vertices, and label the repeated arcs $\alpha_0$, $\alpha_1$, ..., $\alpha_{n-1}$, $\alpha_n = \alpha_0$. For simplicity of notation we will understand $\alpha_i$ to mean $\alpha_r$ where $r$ is the residue of $i$ mod $n$, for any integer $i$.
    
    Let $m \in \mathbb{N}_{>0}$ be minimal such that there exists $i \in \mathbb{Z}$ with $\alpha_{i+m}$ either interleaving with $\alpha_i$ or nested inside $\alpha_i$. Such an $m < n$ exists due to our initial assumption.
    
    It is important to note that $m \geq 2$, since an arc cannot interact directly with one that interleaves it (as the two arcs can never coexist), nor with one that is nested inside of it (by the definition of nesting, the nested arc is on the non-interactive side). Also $m \leq n-2$ for similar reasons. In what follows we will use implicitly that $\alpha_{i+m-1}$ is distinct from $\alpha_i$.
    
    \textbf{Claim 1:} There exists $p \in \mathbb{Z}$ such that $\alpha_{p+m}$ is nested inside $\alpha_p$.

    \textbf{Proof of claim 1:} Let $i$ be as in our definition of $m$. If $\alpha_{i+m}$ is nested in $\alpha_i$, then we are already done, since $p=i$ fits our requirement.
    
    Otherwise, $\alpha_i$ and $\alpha_{i+m}$ interleave. By the previous lemma, we can label the endpoints of $\alpha_i$ $a$ and $b$, and the endpoints of $\alpha_{i+m}$ $c$ and $d$ such that the configuration on the right of Figure \ref{fig:interleave} is present (or, perhaps, a mirror image of it). The arc $c_+d_+$ is either $\alpha_{i+m-1}$ or $\alpha_{i+m+1}$. But $c_+d_+$ interleaves with $ab$, so by minimality of $m$, $\alpha_{i+m-1}$ cannot be $c_+d_+$. Therefore $\alpha_{i+m+1} = c_+d_+$. For similar reasons, $a_+b_+$ is $\alpha_{i-1}$.
    
    Now consider the position of $\alpha_{i+m-1} = c_-d_-$. It cannot interleave with $aa_+$ (since they coexist at some point). It must be contained on the side of $aa_+$ which contains $cd$ (since the arcs $cc_-$ and $dd_-$ also coexist with $aa_+$). It must be on the interactive side of $cd$ (since $cd$ interacts with it). It cannot interleave with $ab$, by minimality of $m$. Therefore either both endpoints of $\alpha_{i+m-1}$ lie on the portion of the binding circle moving counter-clockwise from $a$ to $c$, or both endpoints lie on the portion moving counter-clockwise from $d$ to $a_+$. In the first case, $\alpha_{i+m-1}$ is nested inside $ab = \alpha_i$, contradicting minimality of $m$. Therefore the second case occurs, and $\alpha_{i+m-1}$ is nested inside $a_+b_+ = \alpha_{i-1}$. Setting $p = i-1$, the claim is proved. $\square$

    \textbf{Claim 2:} If $\alpha_{p+m}$ is nested inside $\alpha_p$, then $\alpha_{p+m}$ interacts towards $\alpha_p$ and also $\alpha_{p+m-1}$ is nested inside $\alpha_{p-1}$.

    \textbf{Proof of claim 2:} First note that if $\alpha_{p+m}$ interacts away from $\alpha_p$, then $\alpha_{p+m-1}$ is also nested inside $\alpha_p$, which contradicts minimality of $m$. Therefore $\alpha_{p+m}$ interacts towards $\alpha_p$.

    Now consider the position of $\alpha_{p+m-1}$. If $\alpha_{p+m}$ interacts with $\alpha_{p+m-1}$ while $\alpha_p$ is present, then $\alpha_{p+m-1}$ is also nested inside $\alpha_p$, contradicting minimality of $m$. So $\alpha_{p+m}$ interacts with $\alpha_{p+m-1}$ while $\alpha_p$ is joined to one of its neighbours $\alpha_k$, with $k \in \{p-1,p+1\}$.

    Thus each endpoint of $\alpha_{p+m-1}$ lies either on the non-interactive side of $\alpha_p$, or on the non-interactive side of $\alpha_k$. By minimality of $m$, $\alpha_{p+m-1}$ cannot be nested inside $\alpha_p$, and it cannot interleave with $\alpha_p$. Therefore it is nested inside (and is distinct from) $\alpha_k$. But then $k=p+1$ is impossible (if $m > 2$ this is because it would contradict minimality of $m$, while if $m=2$, this would contradict our observation that $\alpha_{p+m-1}$ and $\alpha_k$ are distinct), so $k=p-1$, $\alpha_{p+m-1}$ is nested inside $\alpha_{p-1}$, and the claim is proved. $\square$

    So by claim 1 we have a $p$ satisfying the hypothesis of claim 2. But then the conclusion of claim 2 means that $p-1$ also satisfies the hypotheses of claim 2, and so on forever. For every $j \in \mathbb{N}$, $\alpha_{p-j+m}$ is nested inside $\alpha_{p-j}$ and interacts towards it.

    Every repeated arc is nested inside another one towards which it interacts. Thus there is an infinite sequence of repeated arcs, each one nested inside and interacting towards the next. But this is not possible, since then each arc in this sequence must have strictly more vertices on its non-interactive side than its predecessor does. We get an infinite strictly increasing sequence of non-negative integers, and all of them are at most $2n$. This is a contradiction, so our original assumption was false.

    No pair of repeated arcs are nested or interleaving. If there is a vertex on the non-interactive side of a repeated arc $ab$, then that vertex is the endpoint of a repeated arc which must either nest inside $ab$ or interleave $ab$. Neither of these happen, so there can be no vertices on the non-interactive side of a repeated arc. All repeated arcs are semicircles.
\end{proof}

Due to these lemmas, each component of $T$ that isn't unknotted and has repeated arcs is positioned very nicely. If every component of $T$ has repeated arcs, then the whole of $T$ is positioned very nicely. It doesn't quite guarantee narrow position, but this can easily be fixed.

\begin{proposition}
    \label{prop:toNarrowPosition}
    Suppose we have an admissible collection of satellite tori $T$ for an arc presentation $L$ of a link, such that $T$ has standard foliation, no component of $T$ is unknotted, and all components of $T$ have repeated arcs.

    Then we can perform an ambient isotopy of the 3-sphere such that $T$ is put into narrow position without increasing the arc index of $L$.
\end{proposition}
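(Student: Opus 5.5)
By Lemma \ref{lem:repeatedArcToriAreNice}, applied to each component of $T$ separately, every repeated arc of every component is a semicircle. Since the repeated arcs of a component pair up all of its vertices, the $2n$ vertices of that component split into $n$ adjacent pairs on $S_\phi^1$, each pair joined by a repeated semicircle. There are no unshaded semicircles (Section \ref{subsec:emptySemis}), so each repeated semicircle is shaded and spans one of the $n$ shaded segments of that component. Every event of that component either joins $\alpha_i$ and $\alpha_{i+1}$ into a shaded band, which is a positive event, or splits such a band back into the two semicircles, which is a negative event. I would also check that semicircles coming from different components do not interleave or nest in a bad way. Semicircles from different components occupy disjoint segments, and a band of one component can only coexist with semicircles of the others, so the picture in each page is consistent.

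The plan is to reorder the events in time, without changing any arcs, so that each page alternates between "all semicircles" and "all semicircles plus one band". Each repeated arc $\alpha_i$ is present during two time intervals. Between them it is merged, once with $\alpha_{i-1}$ and once with $\alpha_{i+1}$. Each merge is a positive event followed later by the matching negative split, and the merges of a fixed arc alternate with its presence.

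The key step is to push each negative (splitting) event backwards in time until it directly follows its matching merge. Equivalently, we shrink every band interval until no other event lies inside it. A split of the band $\alpha_i\alpha_{i+1}$ can be commuted past an intermediate event $E$ provided $E$ is compatible with both states, with and without the band. I would argue this holds for three kinds of event. An event of another band of the same component involving neither $\alpha_i$ nor $\alpha_{i+1}$ is disjoint from the band region. An event of another component is also disjoint from it. Neither case can involve $\alpha_i$ or $\alpha_{i+1}$, since these are busy during the band. An arc of $L$ inside the band can be moved along with it. This is exactly the event-interchange isotopy mentioned in the paper, and it is realised by an ambient isotopy that leaves $L$ unchanged, or at most moves arcs of $L$ across pages. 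The arc presentation is preserved, so the arc index does not increase. I would write this as pushing pages containing saddles past one another, which does not alter $L\cap S^1_\phi$.

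The main obstacle is the arcs of $L$ that lie inside a band, where the band is the only place $L$ can pass between segments. When the band interval shrinks, those arcs of $L$ must be carried along. An arc of $L$ joining the segments of $\alpha_i$ and $\alpha_{i+1}$ can only occur while the band exists. I would handle this by shrinking each band interval to a short window that still contains every such arc of $L$, with the arcs pushed together, and by moving the other events out of that window rather than moving the band's saddles past the arcs of $L$. Arcs of $L$ lying within a single segment live in a shaded semicircle region and can be pushed freely through time. After this, each page is either all semicircles or all semicircles plus exactly one band, with each shaded segment in exactly two bands. That is narrow position.
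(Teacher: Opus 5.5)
Your main reordering step is the same as the paper's. You bring each band's collapse forward to immediately after the band forms, together with the arcs of $L$ lying in the band region, using that every other event happens on one side or the other of the band.

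The gap is in the preliminary step. You deduce that each repeated semicircle is shaded from Section~\ref{subsec:emptySemis}, but the absence of unshaded semicircles is not a hypothesis of the proposition. Moreover, that section concerns semicircles of $T$ as a whole. Lemma~\ref{lem:repeatedArcToriAreNice}, by contrast, only says that a repeated arc of $T_i$ is a semicircle relative to $T_i$ alone. A priori its semicircular region could contain vertices of another component $T_j$; then it is not a semicircle of $T$, and Section~\ref{subsec:emptySemis} says nothing about it. So your assertion that semicircles of different components occupy disjoint segments is unjustified, and it is exactly what makes your page-by-page picture consistent.

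The missing argument uses knottedness directly. For one component $T_i$, the semicircular regions of its repeated arcs, together with the bands between them, sweep out a solid torus bounded by $T_i$. Since $T_i$ is knotted, its other side is a non-trivial knot exterior and not a solid torus. So this solid torus must be $V_i$, i.e.\ the repeated semicircles are shaded. Disjointness of the $V_j$ then shows that no vertex of another component lies inside them, so they are genuine semicircles of $T$.

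A smaller point concerns arcs of $L$ with both ends on one of the two segments that occur while the band exists. These may interleave with the arcs of $L$ crossing the band, so they cannot always be ``pushed freely'' out of the window. Instead, carry them along with the collapse in their existing order, as the paper does.
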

\begin{proof}
    First, by Lemma \ref{lem:repeatedArcToriAreNice}, all repeated arcs are semicircles when we only consider one torus. We would like to be sure than no other components of $T$ have vertices inside these semicircles. Considering only this torus, the space swept out by the semicircular regions and the bands between them is a solid torus, so since this torus is knotted, this must be the shaded side. Since all the $V_i$ are disjoint, no vertices of $T$ lie inside the semicircles, and so the repeated semicircles truly are semicircles, even when we look at all of $T$ together.

    Observe from the foliation that every event is either two shaded semicircles joining together to form a ``band", or a band collapsing to form two semicircles. Suppose there is some band that forms but does not collapse immediately afterwards. That is, other events of $T$ happen while it is connected. Each of these events occurs completely on one side or the other of this band. There may also be some events in which arcs of the link pass through this band.

    There is no obstruction to bringing every event in which an arc of the link passes through this band (or just goes from one of the two relevant segments to itself while the band is present) as well as the band collapse event forward in time so that they all happen in a row immediately after the band is formed. The band now collapses immediately after forming. No pairs of temporally-adjacent events have been separated, except some pairs involving the formation and collapse events for the band in question, and so any other band which collapsed immediately after forming before still does so after this isotopy.

    This procedure reduces the number of bands that do not collapse immediately after they form by one. By repeating this until no such bands remain, we place $T$ into narrow position. At no point are any new arcs introduced to the link.
\end{proof}

Now the only situations in which we have not seen a way to simplify are those where the foliation is standard but some component of $T$ has no repeated arcs. In Section \ref{sec:noRepeatedArcs} we will see how such $T$ can be simplified, not directly to narrow position, but to a position where the assumptions of Proposition \ref{prop:toNarrowPosition} are satisfied, at which point this result handles the rest of the process. This upcoming technique, unlike the ones we have seen so far, damages the arc index of $L$ significantly, though we will be able to control exactly how much damage is done.

\section{Tori without repeated arcs}
\label{sec:noRepeatedArcs}

There remain situations in which none of the simplifying techniques we have described are available. That is, situations where $T$ has standard foliation but at least one component has no repeated arcs, such as Kazantsev's example (Figure \ref{fig:kazantsev}).

What follows is a description of an isotopy of $(L,T)$ in $S^3$ that we can perform even in such situations, after which $L$ is in an arc presentation and $T$ has standard foliation with every component having repeated arcs, so that we can then conclude according to Proposition \ref{prop:toNarrowPosition}.

The new arc index of our presentation for $L$ could be significantly larger than the original arc index $\alpha$, but this new arc index can be controlled in terms of $\alpha$ and the binding weight of $T$.

Thus all that will remain in order to prove Theorem \ref{thm:satelliteDiagram} is establishing a bound on the binding weight of $T$ in terms of $\alpha$, which is accomplished in Section \ref{sec:bindingWeight}.

The isotopy will happen in two steps. First, we isotope $(L,T)$ so that every $T_i$ has a repeated arc which forms a meridian curve for $T_i$, though the foliation may be far from standard. Then we apply the simplifying moves and argue that the presence of a repeated arc on every torus is preserved.

\subsection{Shrinking a meridian disc}
\label{subsec:shrinkDisc}

The first part of the isotopy will be supported in a small neighbourhood of each $V_i$, so here we limit our view to a single $V_i$ and the components $L_i$ of $L$ lying inside it. We describe how we may isotope $L_i$ and $T_i$ so that $T_i$ has a meridian curve consisting of a repeated arc, and then we simply run this process individually on each $V_i$ (except those that already have a repeated arc).

Our approach is to first find a meridian curve of $T_i$ that lies in some pair of pages, and then isotope this across most of a meridian disc so that it consists of only two arcs. Many arcs of $L_i$ may need to be redirected so that they do not meet the new position of $T_i$.

The following lemma is our starting point. It is very similar to Theorem 2.2 of \cite{Plachta2006EssentialTA}. That Theorem ensures that we can take our two pages to be opposite, while this one allows us to choose one of the pages freely (a freedom we will not need for our purposes), and admits a shorter proof.

\begin{lemma}
    \label{lem:twoPageMeridian}
    Suppose $T$ is an admissible torus whose foliation $\mathcal{F}$ contains no closed leaves or poles. Let $D_t$ be a page not containing a singularity of $\mathcal{F}$. Then there is another non-singular page $D_s$ such that some component of $T \cap (D_t \cup D_s)$ is a curve bounding a meridian disc of a solid torus component of $S^3 \setminus T$.
\end{lemma}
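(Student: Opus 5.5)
The plan is to sweep a second page around the circle starting from $D_t$, and to watch the part of $T$ lying between the two pages. For $s \in S^1_\theta$ not equal to $t$, write $T_{[t,s]}$ for the part of $T$ with $\theta$-coordinate in the closed interval running forwards from $t$ to $s$. The closure of $D_t \cup D_s$ is a $2$-sphere $\Sigma_s$ containing the binding circle. Since $\mathcal{F}$ has no closed leaves, $T \cap D_t$ and $T \cap D_s$ are collections of arcs pairing up the vertices. Hence for non-singular $s$, $T \cap \Sigma_s$ is a disjoint union of simple closed curves, each alternating between arcs of $D_t$ and arcs of $D_s$. These curves are exactly $\partial T_{[t,s]}$; near a vertex, $T_{[t,s]}$ is just a wedge between two leaves.

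First I will reduce the statement to finding a curve of $\partial T_{[t,s]}$ that is essential on $T$. Such a curve $\gamma$ lies on the sphere $\Sigma_s$, so it bounds a disc in $S^3$. By the loop theorem (Dehn's lemma), an essential curve on a torus that is null-homotopic in $S^3$ must compress on one side. The compressing side must be a solid torus component of $S^3 \setminus T$, and $\gamma$ is then isotopic on $T$ to the boundary of a meridian disc of that solid torus. This is the same reasoning used in the proof of the interleaving lemma in Section~\ref{sec:repeatedArcs}. Since we may perturb $s$ within an interval free of singularities, $D_s$ can be taken non-singular. So it suffices to find one such $s$.

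To find it, I will run an induction over the saddles met as $s$ moves forwards from $t$. Call a subsurface $A \subseteq T$ \emph{small} if each component of $A$ lies in an embedded disc in $T$. For $s$ just after $t$, $T_{[t,s]}$ is a union of thin discs: each is a neighbourhood of an arc of $T \cap D_t$, bounded by that arc and its parallel copy in $D_s$. So it is small. For $s$ just before $t + 2\pi$, $T_{[t,s]}$ is $T$ minus such thin discs, which is not small.

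Passing a non-saddle page does not change the topology of $T_{[t,s]}$. Passing a saddle attaches a band, i.e.\ a $1$-handle, to $T_{[t,s]}$. There are two cases.
\begin{itemize}
\item If the band joins two distinct small components, the union again lies in a disc, so it stays small.
\item If the band joins a small component $A_0 \subseteq N$, with $N$ a disc, to itself, consider a loop running once through the band and back inside $N$. If this loop is inessential, the new component still lies in a disc. If it is essential, a regular neighbourhood of $A_0 \cup$ band is an annulus whose core is essential. In that case some boundary curve of the new $T_{[t,s]}$ is parallel to that core, hence essential.
\end{itemize}
Since $T_{[t,s]}$ must stop being small at some saddle, there is a first saddle after which $\partial T_{[t,s]}$ contains an essential curve. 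Taking $s$ just after that saddle then finishes the proof by the first step.

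The main obstacle is the band-attachment case analysis in the second step. One has to check carefully that a band along an essential loop genuinely forces an essential component of $\partial T_{[t,s]}$, and is not cancelled by the holes of $A_0$ or by other components of $T_{[t,s]}$. I would handle this by noting that the boundary of a regular neighbourhood of $A_0 \cup \text{band}$ differs from the corresponding part of $\partial T_{[t,s]}$ only by curves lying inside $N$. These curves are inessential, so they cannot absorb the two essential boundary curves of the annulus.
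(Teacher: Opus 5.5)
Your sweep-and-band induction is the same as the paper's: its property ``$S_i$ lies in a union of discs bounded by curves of $\partial S_i$'' is your ``small''. It correctly produces a non-singular $s$ for which some component of $T\cap\Sigma_s$ is essential on $T$. When $A_0$ has holes, the annulus you want is the filled-in disc $\hat A_0$ plus the band, not a regular neighbourhood of $A_0\cup b$, but that is cosmetic.

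The genuine gap is your first step. The principle you invoke, that an essential curve on a torus which is null-homotopic in $S^3$ compresses on one side, is false. Every curve is null-homotopic in $S^3$, and a longitude of a knotted solid torus compresses on neither side. The loop theorem and Dehn's lemma need a singular disc lying in one complementary component $X$ of $T$, i.e.\ $\gamma$ trivial in $\pi_1(X)$. The disc that $\gamma$ bounds in $\Sigma_s$ will in general cross $T$, so they do not apply.

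Knowing that $\gamma$ bounds an embedded disc does not rescue the step either. The lemma does not assume $T$ is knotted, and a $(1,1)$ curve on an unknotted torus bounds an embedded disc in $S^3$ yet is a meridian of neither side. The cable argument from the interleaving lemma, which you cite, uses knottedness, so it would prove the statement only for knotted $T$.

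The missing idea is the innermost-curve argument, which is exactly how the paper finishes:
\begin{enumerate}
\item Among the components of $T\cap\Sigma_s$ that are essential on $T$, take one, $\gamma$, that is innermost on the sphere $\Sigma_s$.
\item The disc $D\subseteq\Sigma_s$ bounded by $\gamma$ then meets $T$ otherwise only in circles that are inessential on $T$.
\item Replace the subdiscs of $D$ bounded by the outermost of these circles with the discs they bound on $T$.
\item Push the result off $T$ into the side that contains $D$ near $\gamma$. This gives a disc meeting $T$ only in $\gamma$, i.e.\ a compressing disc.
\end{enumerate}
That side is therefore a solid torus and $\gamma$ is its meridian. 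Since the statement only asks for \emph{some} component, choosing $\gamma$ innermost suffices. Alternatively, all essential components of $T\cap\Sigma_s$ are disjoint, hence parallel on $T$ to this one, so each of them bounds a meridian disc too.
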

\begin{proof}
    After reparametrising $\theta$, we may take $D_t$ to be $D_0$ and the $n$ events, which are all saddles, to be located at $\theta = \frac{1}{2n}\cdot2\pi, \frac{3}{2n}\cdot2\pi, ..., \frac{2n-1}{2n}\cdot2\pi$ so that the values $\theta = 0, \frac{2\pi}{n}, 2 \cdot \frac{2\pi}{n}, ..., (n-1) \cdot \frac{2\pi}{n}$ give one page in each interval between events.

    Consider the sequence of subsurfaces $S_1 \subseteq S_2 \subseteq ... \subseteq S_{n-1} \subseteq T$ given by $S_k = T \cap \{0\leq\theta\leq k \cdot \frac{2\pi}{n}\}$.

    Then $S_1$ is a union of disjoint discs in $T$, and $S_{n-1}$ is the complement of a union of disjoint discs in T. Each $S_{i+1}$ is obtained from $S_i$ by the addition of a band in $T$ running between two (not necessarily distinct) boundary components of $S_i$,
    followed by an ambient isotopy. The band is a patch of $T$ surrounding the saddle point that occurs at $\theta = \frac{2i+1}{2n}\cdot2\pi$.

    Consider, for each $S_i$, whether $S_i$ is contained in a union of discs whose boundaries are circles of $\partial S_i$ (two equivalent formulations of this property are $S_i$ being a union of discs in $T$ that have had subdiscs removed, or all curves of $\partial S_i$ being trivial and the outermost ones bounding discs on their $S_i$ side). This is a property possessed by $S_1$ but not $S_{n-1}$, so there is some $i$ such that $S_i$ has this property and $S_{i+1}$ does not. Consider the band which is attached to $S_i$ to form $S_{i+1}$.

    If the band lies inside a disc bounded by a curve of $\partial S_i$ then $S_{i+1}$ would also have the property, so the new band must connect two outermost circles of $\partial S_i$, or connect an outermost circle to itself.

    In the first case, $S_{i+1}$ would still have the property, so the band must connect an outermost circle to itself. In this case $S_{i+1}$ has two ``new" boundary components, and if either of them bounds a disc on either side, $S_{i+1}$ would have the property. Thus both of these new curves are homotopically non-trivial.

    These homotopically non-trivial curves are contained in $\partial S_{i+1}$ which lies in the pair of pages $D_0 \cup D_{(i+1)\cdot \frac{2\pi}{n}}$. These pages together form a 2-sphere $P$, which intersects $T$ in several circles. We have established that at least one of these circles in non-trivial on $T$. Let $\gamma$ be such a circle that is innermost (on $P$) among all such circles. This circle bounds a disc $D$ in $P$ that intersects $T$ only in circles that bound discs in $T$. Delete the subdiscs bounded by the outermost of these circles in $D$ and replace them with the discs that these circles bound on $T$. Finally we push the resulting disc off $T$ slightly, into the component of $S^3 \setminus T$ that $D$ is contained in near its boundary. This yields a disc disjoint from $T$ except at its boundary which is a non-trivial curve in $T$. Thus this is a meridian disc, and $\gamma$ is a meridian curve lying in $P$.
\end{proof}

Since $T_i$ is not unknotted, the above Lemma tells us that we can find a meridian curve of $T_i$ in a pair of pages that bounds a meridian disc in $V_i$. By reparametrising $\theta$, we may assume that $T_i$ has a meridian curve $\gamma$ lying in $P = D_0 \cup D_\pi$ (and we take $\gamma$ to be innermost among such curves). For the rest of this section we imagine $S^1_\phi$ to be a vertical line in the plane $P$, the space above $P$ to be the points with $\theta \in (0,\pi)$, and the space below $P$ to be
the points with $\theta \in (\pi,2\pi)$.

Now $\gamma$ bounds a disc $A$ in $V_i$ that lies in $P$ except where it runs very close to $T$ across discs bounded by other curves in $P$. Choose a segment of $S^1_\phi \cap V_i$ that is incident to $\gamma$, and let $\beta$ be an arc in $P$ in a neighbourhood of this segment that has both ends on $\gamma$ and encloses between $\beta$ and $\gamma$ a disc $B$ containing a portion of this segment that contains all points of $L$ that lie on this segment. Let $\delta = \gamma \setminus B$. Then $C = \overline{A \setminus B}$ is a disc with boundary $\beta \cup \delta$. An illustration of what the relevant part of $P$ may look like, together with the arc $\beta$, can be seen in Figure \ref{fig:genericMeridianBeta}.

\begin{figure}
    \centering
    \includegraphics[width=0.5\linewidth]{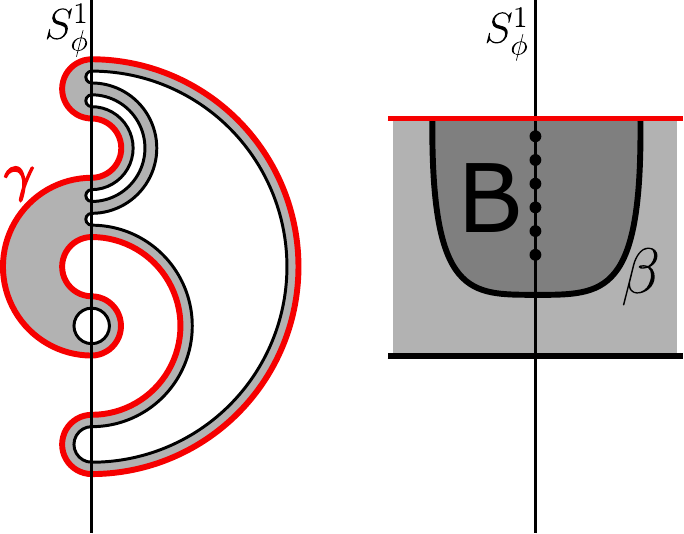}
    \caption{On the left is an example of how $\gamma$ might lie in $P$, together with the outermost circles in the disc it bounds. The disc $A$ lies in the shaded region except where it runs near $T_i$. On the right is the arc $\beta$ and the disc $B$ that it bounds with $\gamma$.}
    \label{fig:genericMeridianBeta}
\end{figure}

Our goal now is to perform a small perturbation of $C$ to form a disc $D$, then isotope $L_i$ in $V_i$ so that it is disjoint from $D$. All arcs that pass through the meridian disc $B \cup D$ will need to be diverted through $B$ instead of $D$.

Once this is accomplished, we can isotope $T_i$ across a neighbourhood of $D$  to take $\gamma$ to $\partial B$ so that afterwards $B$ is a meridian disc of $V_i$ and the meridian curve $\partial B$ consists of only two arcs.

In order to perturb $C$ suitably, we first examine the main component of $C \cap P$, which is a disc with holes. It is the shaded region from the left of Figure \ref{fig:genericMeridianBeta}, with $B$ and a small region near the boundary components other than $\gamma$ excluded. This region has one boundary component $\beta \cup \delta$ and potentially several others. The binding circle $S^1_\phi$ intersects it in a collection of disjoint properly embedded arcs which may run between distinct boundary components or from a boundary component to itself.
Exactly one of these arcs emerges from $\beta$.

We choose a collection of ``green" arcs in this region as follows: one arc runs just parallel to $\delta$, ending on $\beta$ at both ends. For each other boundary component, we add an arc running from $\beta$ to near that boundary component, running close to it around the circle, then back to $\beta$ running parallel to the beginning of the arc. These arcs can be chosen disjoint, and they split the region into two parts. A ``red" part in the thin spaces between the green arcs and the boundary components, and a``blue" part everywhere else. An example is shown in Figure \ref{fig:greenArcs}.

\begin{figure}
    \centering
    \includegraphics[width=0.5\linewidth]{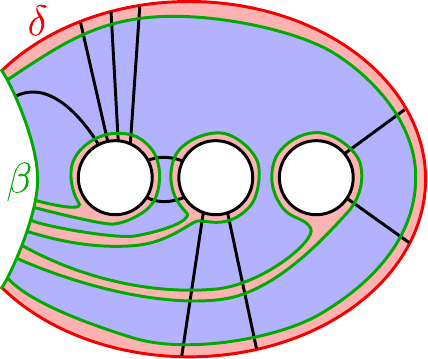}
    \caption{An example of how we choose our green arcs. All of them end at $\beta$ at both ends, and every segment of the binding circle (shown in black) meets a green arc.}
    \label{fig:greenArcs}
\end{figure}

We may choose these arcs so that on any particular segment of the binding circle, every point of $L$ is contained in a single contiguous blue portion of that segment, and each green arc meets each segment of $S^1_\phi$ at most twice. It will be vitally important for redirecting $L$ that every segment meets at least one green arc, and that this arc ends at $\beta$.

Take $C$ and perturb it by pushing the blue parts upwards into $\{0 < \theta < \pi\}$ and pushing the red parts downwards into $\{\pi < \theta < 2\pi\}$. This forms our disc $D$ which intersects $P$ in $\beta$ and $\delta$ and the green arcs (as well as some curves very close to $T_i$ which will be of no concern).

\begin{proposition}
    \label{prop:redirect}
    With $D$ as described above, it is possible to isotope $L_i$ in $V_i$ so that afterwards it is arc presented and is disjoint from $D$.
\end{proposition}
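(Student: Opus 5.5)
The plan is to analyse where $L_i$ meets $D$ and to reroute every offending piece of $L_i$ through the thin disc $B$, using the green arcs as highways to reach $\beta$. Since $D$ agrees with $C$ except for the perturbation, $L_i \cap D$ comes from two sources.

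The first source is arcs of $L_i$ lying in pages with $\theta \in (0,\pi)$ or $(\pi,2\pi)$ that cross the pushed blue or red parts. The second source is vertices of $L_i$ on $S^1_\phi$ lying in the part of a segment that $C$ contains. By our choice, each such segment's points of $L$ lie in one contiguous blue portion, and the segment through $B$ already has its points of $L$ in $B$.

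The first step is to arrange, after a small isotopy, that all vertices of $L_i$ on segments meeting $C$ lie in blue portions. Since blue is pushed upward, the disc $D$ then lies above every such vertex near $S^1_\phi$ and below it on the red parts. The only place where $D$ meets $S^1_\phi$ is along the green arcs, and each green arc meets each segment at most twice and ends on $\beta$.

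The key construction is as follows. For an arc of $L_i$ in a page $D_t$ with $t \in (0,\pi)$ that crosses the blue part of $D$, I would replace it by a detour. From its vertex $x$, the arc travels along the binding circle to a point just past where a green arc $g$ meets the segment containing $x$; this is a sequence of stabilisations, i.e.\ new vertices and short arcs in nearby pages. It then follows $g$ back to $\beta$, so that it can pass through $B$ rather than $D$.

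Concretely, I would realise "running parallel to a green arc" in an arc presentation as a staircase. Because $D$ near a green arc is a fold from up to down, a strand lying just in $P$ on the green arc's side can cross between the upper and lower halves without meeting $D$. Each crossing of $P$ near $S^1_\phi$ costs a vertex, and a passage through a page costs an arc. Since $B \cup D$ is a meridian disc and $L_i$ must cross it algebraically the right number of times, every strand crossing $D$ is rerouted to cross $B$ instead. Strands crossing $D$ and back inessentially can instead be pushed off through the nearest green arc without touching $B$.

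For the arcs in lower pages $(\pi,2\pi)$, the same scheme applies with red and blue roles swapped. For the vertex contributions, I would push each vertex of $L_i$ slightly along its page arc so that it sits in a blue portion consistent with the side its incident arcs lie on.

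The step I expect to be the main obstacle is making the rerouting genuinely arc presented and simultaneously disjoint from $D$. The difficulty is that the detours must be composed of page arcs and binding-circle vertices, and the detours of different strands must not collide with each other, with $T_i$, or with the folds of $D$ near the other boundary components, where $D$ runs close to $T_i$.

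I would try first to order the detours by nesting. The strands closest to $\delta$ use the outermost green arc, and successive strands use parallel copies of the green arcs placed in thin collars. This makes the detours disjoint and lets each be drawn as a monotone staircase of vertices on consecutive segments, with arcs in pages just above or below $P$. Every segment meets a green arc ending on $\beta$, so every crossing point of $L_i$ with $D$ has such a route.

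The result is an arc presentation of $L_i$ inside $V_i$ disjoint from $D$. I would keep track of the number of added vertices, roughly $\alpha$ times the binding weight, for the later bound.
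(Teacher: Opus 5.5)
Your high-level plan (stabilise the offending arc just past a green point, then reroute it along the green arc through $B$) matches the paper. However, the proposal does not supply the step that makes this an isotopy through arc presentations, which is the step you yourself flag as the main obstacle.

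A problematic arc $xz$ lies in a page $D_{t_0}$ with $t_0$ well inside $(0,\pi)$. Your staircase lives in pages just above or below $P$, and the green arc you want to follow lies in $P$. To get there, the piece of the arc from $x$ to the new vertex $y$ just past the green point must be swept through every page between $t_0$ and $0$ (or $\pi$). That is a sequence of column exchanges, and it is not an isotopy whenever an arc of $L_i$ in an intermediate page interleaves with the piece, since the two would have to pass through each other. Nesting the detours in parallel collars of the green arcs only controls their spatial disjointness, not this temporal obstruction. Drawing the final staircase does not show that it is isotopic to the original.

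The missing idea in the paper is an ordering.
\begin{itemize}
\item Choose $y$ closer to the green point than every other point of $L_i$ on the segment. Then any arc interleaving $xy$ must be incident to the blue portion $c$ that contains the vertices.
\item Process the arcs incident to $c$ with $\theta\in(0,\pi)$ one at a time, always taking the one with $\theta$ closest to $0$ (resp.\ $\pi$). Then nothing can lie in the way.
\item Arcs with both ends on the segment are simply pushed past $\theta=0$ (resp.\ $\pi$), since they would otherwise obstruct later steps.
\end{itemize}
Only after this is $xy$ flattened into $P$. It is then isotoped in $P\setminus T_i$ along the blue side of the green arc, into $B$ to touch $s$, and back along the red side to $y$. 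Finally the blue pieces are pushed slightly below $P$ and the red pieces slightly above. That return trip along the red side, which makes the remainder $yz$ leave from a red portion so that it no longer meets $D$, is also absent from your description. One must also check that no new problematic arc-ends are created at other segments, so that the segment-by-segment procedure terminates.

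Several preliminary steps are also off.
\begin{itemize}
\item After the perturbation, the vertices of $L_i$ all lie in blue portions and are already disjoint from $D$, so there is no ``second source'' of intersections.
\item ``Pushing a vertex along its page arc'' is not a legal operation, since vertices must stay on $S^1_\phi$.
\item Arcs in pages with $\theta\in(\pi,2\pi)$ never meet $D$, because red portions contain no vertices of $L_i$. Your ``red/blue swapped'' case is therefore vacuous. The only problematic arcs are those with $\theta\in(0,\pi)$ joining a segment met by $D$, other than $s$, to a different segment.
\item The appeal to algebraic intersection with $B\cup D$, and the separate, unspecified treatment of ``inessential'' crossings, is unnecessary. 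The paper simply reroutes each problematic arc-end separately through $B$, which also yields the cost count used afterwards.
\end{itemize}
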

\begin{proof}
    First observe that arcs of $L_i$ are disjoint (or can be made disjoint by isotopy in their page) from $D$ except for those that are incident to one of the segments met by $D$ (except for the segment $s$ that meets $\beta$) and are also incident to some other segment, and have a $\theta$ coordinate in $(0,\pi)$. These arcs pierce the blue part of $D$, and these are the ``problematic" arcs that we need to redirect through $B$. Note that such an arc could be problematic at only one end or at both (if it is incident at both ends to segments meeting $D$).

    Also of note are those arcs that are incident to one of these segments and have a $\theta$ coordinate in $(0,\pi)$, but are incident to the same segment at both ends. These do not meet $D$, but will need to be moved anyway since they would obstruct our other operations.

    We focus on each segment meeting $D$ (except $s$) in turn and explain how to redirect the problematic arcs.

    At each segment, we denote by $c$ the contiguous blue portion of the segment to which all the original arcs of $L$ meeting this segment are incident. We select a green point (a point of intersection between this segment and a green arc) which is one of the endpoints of $c$.

    We also choose whether we will be ``flattening" the arcs towards $\theta = 0$ or $\theta = \pi$. We consider the arcs incident to $c$ that have $\theta$ coordinates in $(0,\pi)$, and among these we focus on the one whose $\theta$ coordinate is closest to $0$ if we're flattening towards $0$, or to $\pi$ if we're flattening towards $\pi$.

    If this arc is incident at both ends to this segment, we simply push it through time beyond $\theta = 0$ (or $\pi$) so that it no longer lies above $P$. There is no obstruction to doing this; any arc that interleaves this arc will also be incident to $c$, and will not lie between our arc and $\theta=0$ (or $\pi$) since we chose a closest arc in $\theta \in (0,\pi)$. At this point we are done with this arc. It will never need to be moved again, and we proceed to the next step.

    \begin{figure}
        \centering
        \includegraphics[width=0.8\linewidth]{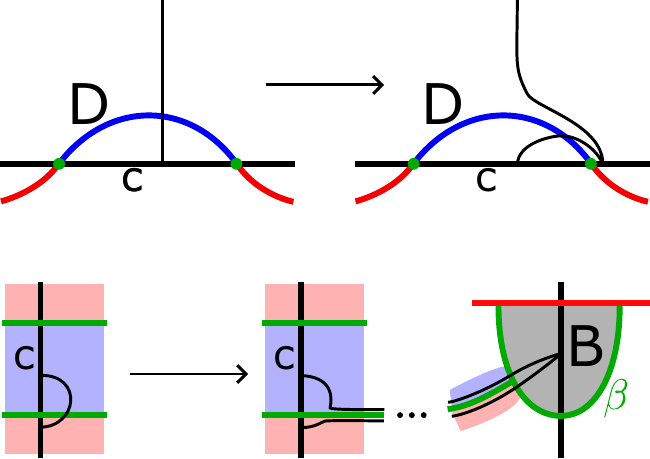}
        \caption{The top left shows a problematic arc, which pierces $D$. First we stabilise it near the green point as shown. After flattening the new arc into $P$, we let it follow a green arc all the way to $B$.}
        \label{fig:redirection}
    \end{figure}

    If instead our chosen arc is a problematic one, we first isotope it in its page to add another point of incidence to the segment just beyond our chosen green point (closer to that green point than any other point of $L_i$ on this segment). This change is depicted at the top of Figure \ref{fig:redirection}. The arc has become two arcs. One does not pierce $D$ near this segment (though it might pierce $D$ near another segment, if the original arc was problematic at both ends). The other does pierce $D$ near this segment but is incident to this segment at both ends. We push it through time to exactly $\theta = 0$ (or $\pi$). This is possible because any arc interleaving this one would need to be incident to $c$, and such an arc cannot come between our arc and $\theta=0$ (or $\pi$) since we picked a closest arc in $\theta \in (0,\pi)$.

    Once this arc lies in $P$, it meets a green arc. We isotope the arc in $P \setminus T_i$ so that instead of passing through the green arc, it follows it on its blue side all the way to $\beta$, enters $B$ and touches the segment $s$, then leaves $B$ and follows the green arc back on the other side (the red side) until it returns to the segment it started at. This is depicted at the bottom of Figure \ref{fig:redirection}.

    This replaces the single arc of $L_i$ with several arcs, since it may pass through several segments of $S^1_\phi$. Note that the new redirected path in $P$ intersects $D$ only at two points of $\beta$. We resolve this by taking each of our new arcs in $P$ and pushing them very slightly below $P$ if they run through a blue region or very slightly above $P$ if they run through a red region. As a result, our original problematic arc has been isotoped to no longer pierce $D$ at this end.

    In either case, the number of arcs incident to $c$ with $\theta$ coordinates in $(0,\pi)$ has decreased by one. We then repeat the process, dealing with the next closest arc to $\theta=0$ (or $\pi$), and so on. Eventually there are no arcs incident to $c$ with $\theta$ coordinate in $(0,\pi)$, and thus all problematic arcs at this segment have been made disjoint from $D$ near this segment. We move on to another segment and perform the same procedure there.

    Importantly, at no point do we introduce any new problematic arcs at any segments, so once we have performed the process at each segment meeting $D$ (except $s$), $L_i$ is disjoint from $D$ (and instead pierces $B$ many times).
\end{proof}

As described before, once this is accomplished, we isotope $T_i$ across a neighbourhood of $D$ so that $\partial B$ is a meridian curve of $T_i$.

We do the same process to every $V_i$, and once we're done, every component of $T$ has a meridian curve that has length 2 in the vertex graph of the foliation.

\subsection{The effect on the arc index}
\label{subsec:arcIndexIncrease}

The process described in the proof of Proposition \ref{prop:redirect} raises the arc index of our arc presentation, and we need control over the size of this increase. This will depend on certain choices we make.

Firstly, observe that there are at least two non-trivial curves of $T_i \cap P$ lying in $P$, and $P$ is a sphere, so there are at least two innermost such curves, call them $\gamma_1$ and $\gamma_2$. These are adjacent to distinct components of $V_i \cap P$, which we call $R_1$ and $R_2$ respectively (these are distinct since $R_1$ lies on the inner side of $\gamma_1$, and similarly $R_2$ lies on the inner side of $\gamma_2$). For $j \in \{1,2\}$, let $v_j$ be the number of vertices of $L_i$ that lie in $R_j$, and let $s_j$ be the number of shaded segments that are part of $R_j$.

Since there are $\alpha_i$ vertices of $L_i$ in $P$ (where $\alpha_i$ is the original arc index of $L_i$), and there are $w^{(i)}_\beta / 2$ segments of $V_i \cap S^1_\phi$ (where $w^{(i)}_\beta$ is the binding weight of $T_i$), we find that $v_1+v_2 \leq \alpha_i$ and $s_1+s_2 \leq w^{(i)}_\beta /2$.

Note that the smaller $v_js_j$ will always be at most $\alpha_i w^{(i)}_\beta /8$. To see this, write $v_j = x_j \alpha_i$ and $s_j = y_j (w^{(i)}_\beta / 2)$ so that $x_1+x_2 \leq 1$ and $y_1+y_2 \leq 1$. Then observe that if $x_1+y_1 \leq 1$ then $x_1y_1 \leq 1/4$, but otherwise $x_2+y_2 \leq 1$ and therefore $x_2y_2 \leq 1/4$.

We choose $\gamma$ to be whichever $\gamma_j$ has $v_js_j$ smaller. Then $\gamma$ borders a shaded region containing $v$ vertices of $L_i$ and $s$ segments, with $vs \leq \alpha_i w^{(i)}_\beta /8$. 

Secondly, when deciding which way to flatten the arcs at a segment, we should choose the direction in which the green arc passes through the fewest other segments before reaching $\beta$. Since each green arc meets each segment at most twice, each green arc meets segments at at most $2s$ points, one of which is our starting point. Thus by going the ``short" way, we can ensure that the arc encounters at most half of the $2s - 1$ or fewer remaining points before reaching $\beta$.

When we redirect a problematic arc, we add one arc initially, then add $2k+1$ arcs when we isotope it in $P$, where $k$ is the number of times the green arc passes through a segment before reaching $\beta$. By the above we can ensure $k \leq s - 1/2$ so that we add $2k+2 \leq 2s + 1$ arcs while redirecting this problematic arc.

Finally, note that there are $2v$ arc-ends (by which we mean the piece of an arc of $L_i$ close to one of its endpoints) that are incident to the shaded region, and each of these may pierce $D$. However, if we were to interchange the roles of the blue and red regions, pushing blue down and red up instead of the opposite, any arc-end that was previously problematic would no longer be problematic. Therefore if we choose whichever approach leads to fewer problematic arc-ends, then we will only need to redirect an arc at most $v$ times.

Thus overall we may accomplish the isotopy while raising the arc index of $L_i$ by at most $v(2s+1) = 2vs + v \leq \alpha_i w^{(i)}_\beta /4 + \alpha_i$.

We proceed as above for each torus individually, so once we have performed the isotopy for all of them, the arc index of $L$ has been raised by at most
\[
\sum_i \left(\frac{\alpha_iw_\beta^{(i)}}{4}+\alpha_i\right) \leq \left(\sum_i \alpha_i\right)\left(\frac{w_\beta}{4} + 1\right) = \frac{\alpha w_\beta}{4} + \alpha
\]
where $w_\beta$ is the binding weight of $T$ and $\alpha$ is the arc index of $L$ before we perform the isotopies.

In order to properly bound this quantity we need a bound on $w_\beta$ in terms of $\alpha$. This will be the goal of Section \ref{sec:bindingWeight}.

\subsection{Achieving narrow position}

The crux of this argument is that this is all of the necessary damage to the arc index of $L$. Once we have isotoped $(L,T)$ as described in \ref{subsec:shrinkDisc}, it is possible to further isotope it into narrow position without further increasing the arc index of $L$ at all.

In Proposition \ref{prop:toNarrowPosition} we established that this will be the case as long as $T$ is such that every component has a meridian of length 2 in the vertex graph of the foliation $\mathcal{F}$, and also $\mathcal{F}$ is a standard foliation.

Our isotopy has ensured that $T$ has the first property, but $\mathcal{F}$ may be far from standard. Since $T$ is an admissible collection of satellite tori for the arc-presented $L$, we can standardise $\mathcal{F}$ without increasing the arc index of $L$ by applying the simplifying moves described in Section \ref{subsec:simplifyingMoves}. After doing this, $T$ will have the other necessary property. All we need is for the first property to remain intact throughout this simplifying process, and then we are done by Proposition \ref{prop:toNarrowPosition}.

\begin{lemma}
    If $T$ is a collection of admissible tori with no poles or closed leaves such that each component of $T$ has a meridian of length 2 in the vertex graph of the foliation, then when the simplifying moves isotope $T$ to have a standard foliation, the resulting $T$ still has each component having a length 2 meridian.
\end{lemma}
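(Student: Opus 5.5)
The plan is to check, one simplifying move at a time, that after each move every component of $T$ still has a meridian curve made of two leaves between a pair of vertices. The moves used to reach a standard foliation are of three kinds: the pole and closed-leaf removals of Lemma \ref{lem:noPolesOrClosedLeaves}; the valence-two gluing and the hexagon diagonal exchange described in Section \ref{subsec:foliationEffect}; and the empty-semicircle removal of Section \ref{subsec:emptySemis}. Since each move strictly reduces the binding weight or the number of saddles, it suffices to show that each single move preserves the property. In vertex-graph terms, a length $2$ meridian is two vertices $u,u'$ joined by two edges coming from two distinct tiles. The closed curve made of one leaf from $u$ to $u'$ in each tile is a meridian of $T_i$.

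\textbf{Diagonal exchange.} This move changes $\mathcal{F}$ only inside a hexagon formed by two tiles. The vertices are unchanged, and the edge between the two shared vertices is replaced by the other diagonal. If neither tile of the meridian pair lies in the hexagon, the two leaves survive and nothing changes. If one of the two edges $uu'$ is the removed diagonal, I will use the other diagonal together with the remaining edge. I expect the resulting curve to be homotopic on $T$ to the old one, because the two diagonals of a hexagon differ by a path across the hexagon, which is a disc. The possible issue is that the new diagonal has different endpoints. To handle this I would instead use a homotopy argument on the torus: the vertex graph with its tiles is a square cellulation of $T$. A meridian has length $2$ exactly when some pair of vertices $u,u'$ has two edges whose union is an essential curve isotopic to the meridian.

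\textbf{Valence-two gluing.} This move merges $v$ with its two neighbours $x,y$ into a single vertex. Under this quotient of the cellulation, a length $2$ loop maps to a loop of length at most $2$. The degenerate case is when the loop collapses, so that its image has length $0$ or $1$. That would mean the essential curve is carried into the disc neighbourhood of the star of $v$, which is impossible for an essential curve. The isotopy class is preserved because the gluing is realised by an isotopy of $T$ supported near a disc. The empty-semicircle and pole removals are handled by the same quotient reasoning: they compress a disc region of $\mathcal{F}$.

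\textbf{Main obstacle.} The hardest step is the exchange case in which both meridian edges pass through the hexagon, or in which the diagonal exchange shifts an endpoint. My first attempt there is a cleaner reformulation. By the argument of Lemma \ref{lem:repeatedArcToriAreNice}, a repeated arc forming an essential curve is equivalent to an arc $uu'$ present during two disjoint time intervals with the loop essential. I would then track this arc in the pages directly: the moves of Lemma 6 of \cite{monotonicSimplification} only reorder events and push $T$ across small regions near other vertices. So the arc $uu'$ persists in some page in each of its two intervals, unless $u$ or $u'$ is itself eliminated. In that case the gluing gives a replacement arc between the merged vertex and the partner.

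Finally I will check that the length $2$ curve cannot become trivial. It is isotopic to the original meridian, and since the isotopy is ambient, it remains a meridian.
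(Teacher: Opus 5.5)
\textbf{There is a genuine gap, at the diagonal exchange.} Your treatment of the valence-two gluing is sound and matches the paper. It acts on the vertex graph as a quotient, so the minimal length of the meridian class cannot increase. It also cannot drop below $2$, by nontriviality and bipartiteness.

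The proposed fix for the exchange is false. Label the hexagon's vertices $h_1,\dots,h_6$ cyclically, with shared edge $h_1h_4$. The exchange does not merely reorder events. It deletes the tile shared by the two coherently oriented saddles, so the copy of the arc $h_1h_4$ living between those two events disappears, even though $h_1$ and $h_4$ both survive. The new diagonal ($h_2h_5$ or $h_3h_6$) is disjoint from $\{h_1,h_4\}$. So it cannot be paired with the surviving copy of $h_1h_4$ to give a length-$2$ loop, and your claim that ``the arc $uu'$ persists unless $u$ or $u'$ is eliminated'' fails.

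It fails exactly where it matters. Once every vertex is $4$-valent and a repeated arc exists, the foliation is the height-$2$ one of Figure~\ref{fig:fullFoliation}. Vertically adjacent saddles see the same four vertices in opposite cyclic orders, so they have opposite signs. Hence any coherently oriented adjacent pair sits side by side, and its shared edge is one of the two repeated edges $vw$. The exchange then destroys that copy of the repeated arc. Your ``homotopy argument on the torus'' only restates what a length-$2$ meridian is; it gives no reason that one survives.

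\textbf{The missing idea is to order and bundle the moves rather than make each exchange individually harmless.}
\begin{itemize}
\item First exhaust the valence $<4$ moves. These are harmless: in a valence-three move the deleted edge ends at the valence-three vertex, whose three neighbours are pairwise distinct, so that edge is never half of a double edge.
\item After this, every vertex is $4$-valent, a length-$2$ meridian survives, and the rigid height-$2$ structure applies.
\item In that structure, if the two saddles on either side of the upper copy of $vw$ agree in sign, so do the two on either side of the lower copy.
\item One then performs the combined sequence of Figure~\ref{fig:contractRectangle}: an exchange followed by gluings of the resulting valence-two vertices. Its net effect is to crush two triples of vertices together.
\item That composite is again a quotient of the vertex graph, so it preserves a length-$2$ meridian. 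It also lowers the binding weight, so one iterates until the foliation is standard.
\end{itemize}
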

\begin{proof}
    First note that it is enough to establish that the minimal vertex-graph-lengths of the meridians of the components of $T$ do not increase. They will never be zero since they are homotopically non-trivial, and can never be one since the vertex graph is bipartite. Thus if their length cannot increase, it will remain 2 throughout.

    Recall that in Section \ref{subsec:foliationEffect} we observed that when we have a vertex of valence less than four and apply the relevant simplifying move, the effect on $\mathcal{F}$ is that three vertices get crushed together into one. This certainly will not increase the minimal vertex-graph-length of any homotopy class of loops in $T$, since any loop in the old vertex graph projects to one of the same or smaller length in the new vertex graph.

    Once all such moves have been exhausted, the meridians are still length 2, and $\mathcal{F}$ has all vertices of valence four. However, it still may not be standard if there are pairs of adjacent saddles with the same orientation.

    Since $\mathcal{F}$ has all vertices of valence four and has a repeated arc (the arcs of the length 2 meridian), we know due to the beginning of Section \ref{sec:repeatedArcs} that $\mathcal{F}$ has the form shown in Figure \ref{fig:fullFoliation}. Two saddles that are adjacent vertically will always have opposite orientations since they are connected to the same four vertices in opposite cyclic orders. Therefore if two adjacent saddles have the same orientation, they must appear side by side horizontally. Then the pair of saddles adjacent to these two vertically will also share an orientation. The sequence of simplifying moves depicted in Figure \ref{fig:contractRectangle} is available, which has the effect of crushing two sets of three vertices together.

    \begin{figure}
        \centering
        \includegraphics[width=0.9\linewidth]{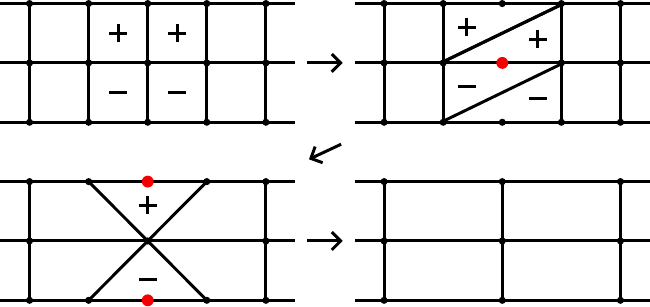}
        \caption{If two adjacent saddles have the same orientation in a ``height 2" foliation, then we can apply moves as above. For the second and third moves, a vertex of valence two is highlighted in red, which we simplify.}
        \label{fig:contractRectangle}
    \end{figure}

    This alteration clearly maintains the property of this component having a length 2 meridian. Since it decreases the binding weight of $T$, we may perform such simplifications until no more are available, at which point $\mathcal{F}$ is standard and every component of $T$ still has a length 2 meridian.
\end{proof}

So after subjecting $T$ to as many of our simplifying moves as possible, it has the properties necessary for Proposition \ref{prop:toNarrowPosition} to apply. In particular we can isotope $(L,T)$ such that $T$ is in narrow position at no further cost to the arc index of $L$.

\section{Initial binding weight}
\label{sec:bindingWeight}

We've seen in Sections \ref{sec:repeatedArcs} and \ref{sec:noRepeatedArcs} that we are able to achieve narrow position after a sequence of simplifying steps during which we raise the arc index by at most $\frac{1}{4}\alpha w_\beta + \alpha$. We desire a bound that depends only on $\alpha$, so we need an upper bound for the binding weight $w_\beta$ of $T$ in terms of $\alpha$.

To do this, we will show that if $T$ is a collection of JSJ satellite tori in the exterior of a link $L$ of arc index $\alpha$ (under some additional assumptions like $L$ being non-split and none of the tori being unknotted), we can position it initially with at most $2\alpha^4$ points of intersection with $S^1_\phi$, so that we only need to increase the arc index by $\frac{1}{2}\alpha^5 + \alpha$.

It is worth noting that the JSJ assumption is vital here. For any fixed limit on the binding weight, only finitely many isotopy classes of admissible satellite tori in the link exterior can be realised within that limit, whereas some knots and links have infinitely many isotopy classes of satellite tori in their exterior. Thus there is no hope of finding a bound that holds for any collection of satellite tori. By limiting ourselves to JSJ tori we avoid this issue.

To prove the claim, we will need the notions of a normal surface and a PL-admissible surface, as well as that of a generalised parallelity bundle.

Throughout this section, we fix a particular triangulation $\mathcal{T}$ of $S^3$ by $\alpha^2$ tetrahedra. Regarding $S^3$ as the join of two circles $S_\phi^1$ (our binding circle) and $S_\theta^1$, we can triangulate each of these circles by $\alpha$ 1-simplices, and take the join of these two triangulations to form $\mathcal{T}$.

This $\mathcal{T}$ is a triangulation in which $S_\phi^1$ and $S_\theta^1$ are both simplicial. Each 1-simplex that is part of one of these circles is incident to $\alpha$ tetrahedra, while all other 1-simplices are incident to only four tetrahedra.

The arc index of $L$ being $\alpha$ means that $L$ can be made simplicial in $\mathcal{T}$ as a union of $2\alpha$ 1-simplices, each of which travels from one circle to the other. Then $T$ is a surface embedded in $S^3$ that is disjoint from those 1-simplices that are part of $L$.

\subsection{Normal surfaces}

\begin{definition}
    An \emph{elementary normal disc} in a tetrahedron $t$ is a disc $D$ properly embedded in $t$ such that
    \begin{itemize}
        \item $\partial D$ is disjoint from the vertices of $t$ and meets the edges of $t$ transversely.
        \item $\partial D$ meets each edge of $t$ at most once, and meets each face of $t$ in at most one arc embedded in that face.
    \end{itemize}
\end{definition}

A tetrahedron supports only seven types of elementary normal disc: four types of triangle, and three types of squares (see Figure \ref{fig:normalDiscs}).

\begin{figure}
    \centering
    \includegraphics[width=0.5\linewidth]{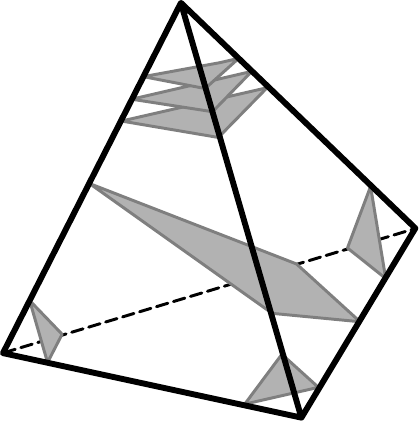}
    \caption{An example of a tetrahedron containing several normal discs. This tetrahedron contains every type of triangle, including three triangles of the same type, and one square. There are two other types of square, depending on which pair of opposite edges the square avoids, but a tetrahedron can only contain one type of square without normal discs intersecting with one another.}
    \label{fig:normalDiscs}
\end{figure}

\begin{definition}
    An embedded surface $S$ in a manifold $M$ is \emph{normal} with respect to a triangulation $\mathcal{T}$ of $M$ if the intersection of $S$ with each tetrahedron of $\mathcal{T}$ is a disjoint collection of normal discs.
\end{definition}

It is well-known that an incompressible surface in a closed irreducible 3-manifold can be isotoped to be normal with respect to any triangulation.

We will need a slight modification of this result, since our surface $T$ is not incompressible in our closed 3-manifold $S^3$.

\begin{lemma}
    \label{lem:normalise}
    Let $\mathcal{T}$ be a triangulation of a closed 3-manifold $M$, and $A \subseteq \mathcal{T}^{(1)}$ be a subset of the 1-skeleton which is a union of 1-simplices such that $M \setminus A$ is irreducible. Let $S$ be a closed embedded surface in $M$ that is disjoint from $A$ such that any compressing disc for $S$ intersects $A$, and no component of $S$ lies in a ball in $M \setminus A$. Then $S$ can be isotoped in $M \setminus A$ to be normal with respect to $\mathcal{T}$.
\end{lemma}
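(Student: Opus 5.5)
We follow the standard normalisation argument (Haken/Kneser): put $S$ in general position with respect to $\mathcal{T}$, keeping it disjoint from $A$, and reduce a lexicographic complexity. The first coordinate is the weight $|S \cap \mathcal{T}^{(1)}|$. The second is the number of components of $S \cap \mathcal{T}^{(2)}$. Each reduction is an isotopy in $M \setminus A$ or a surgery that is afterwards undone by an isotopy. Since $A$ is a union of 1-simplices, general position lets us assume $S$ misses $\mathcal{T}^{(0)}$ and meets the edges transversely, and no move below ever passes $S$ across an edge contained in $A$.

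\textbf{Step 1: Faces.} Suppose $S \cap f$ contains a closed curve for some face $f$. Take an innermost one, $c$; it bounds a disc $E \subseteq f$ with interior disjoint from $S$. Since $E$ lies in the interior of a face, $E \cap A = \emptyset$. So $E$ is not a compressing disc, and hence $c$ bounds a disc $E' \subseteq S$. The sphere $E \cup E'$ lies in $M \setminus A$, so by irreducibility it bounds a ball $B$ there. If $B$ contained the component of $S$, that component would lie in a ball in $M \setminus A$, which is excluded. So we may isotope $E'$ across $B$ to $E$ and push slightly off $f$. This does not increase the weight and strictly decreases the second coordinate.

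Now suppose an arc of $S \cap f$ has both endpoints on one edge $e$. Take an outermost such arc; it cuts off a bigon $E \subseteq f$ with $\partial E = a \cup b$, $b \subseteq e$. If $e \subseteq A$, then $S \cap e = \emptyset$ and no such arc exists. Otherwise we isotope $S$ across $E$ and past $e$, which reduces the weight by $2$. This is legitimate because $e \not\subseteq A$ and no other edge of $A$ meets the open bigon. After these moves, $S \cap \partial t$ consists of normal arcs for every tetrahedron $t$.

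\textbf{Step 2: Tetrahedra.} Consider a component $F$ of $S \cap t$. If $F$ is not a disc, it has an essential compressing curve. Since $\partial t \setminus S$ is planar and $F$ meets $\partial t$ only in circles, we can find a disc $E \subseteq t$ with $E \cap S = \partial E \subseteq F$ and $\partial E$ essential in $F$. The disc $E$ lies in the interior of $t$, so it misses $A$. By hypothesis $\partial E$ bounds a disc in $S$, and the irreducibility and no-ball argument of Step 1 lets us isotope that disc onto $E$. This lowers a third complexity term, the total $-\chi$ of the pieces, without raising the others. Once every piece is a disc, its boundary curve on $\partial t$ is made of normal arcs.

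If such a curve meets some edge $e$ twice, there is a bigon in $\partial t$ as before. If instead a boundary curve lies entirely inside one face, it was removed in Step 1. A disc whose boundary has length greater than $4$ meets an edge twice, so this case is handled by the same bigon argument. Here the edge is not in $A$, because $S$ meets it, and the move again reduces the weight. Each reduction step lowers the complexity, so the process terminates, and at the end $S$ is normal.

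\textbf{Main obstacle.} The delicate point is every appeal to incompressibility. We must check that each disc used to compress $S$ lies in the interior of a face or tetrahedron, or meets the 1-skeleton only in edges that $S$ itself meets. Such an edge cannot be in $A$, because $S$ is disjoint from $A$, so the disc avoids $A$ and the hypothesis applies. The second issue is ruling out a ball containing an entire component, which is exactly what the no-ball hypothesis supplies. Finally, a sphere component of $S$ would lie in a ball in $M \setminus A$ by irreducibility, so no such component occurs.
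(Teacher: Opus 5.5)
Your overall strategy is the same standard normalisation argument the paper uses. You minimise weight, remove non-normal pieces by innermost-disc moves with discs avoiding $A$, and do bigon moves across edges that $S$ meets, which are therefore not in $A$. However, Step 2 contains a false claim.

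\textbf{Step 2: compressing discs.} You assert that for \emph{any} non-disc component $F$ of $S\cap t$ there is a disc $E\subseteq t$ with $E\cap S=\partial E\subseteq F$ and $\partial E$ essential in $F$. Other components of $S\cap t$ can block every such disc. For example, let $F$ be an annulus in $t$ parallel into $\partial t$, cobounding a solid torus $U$ with an annulus of $\partial t$. Let $G$ be another component of $S\cap t$, a parallel copy of $F$ on the side away from $U$. The interior of such an $E$ would lie in $U$ or in the product region between $F$ and $G$. In both regions the core of $F$ is homotopically non-trivial, so no such $E$ exists; only $G$ can be compressed. Your remark that $\partial t\setminus S$ is planar does not supply the disc.

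The fix is to choose the piece as the paper does. Take a curve $\gamma$ of $S\cap\partial t$ that is innermost among boundary curves of non-disc pieces. The disc it bounds on $\partial t$ then meets $S$ only in boundaries of disc pieces. Bypass these by replacing the subdiscs they bound with parallel copies of those disc pieces, then push into $t$. This gives the required $E$ for the piece containing $\gamma$.

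\textbf{Step 2: complexity.} The compression need not lower your third term. Replacing $E'$ by $E$ deletes every piece contained in $E'$, and deleting disc pieces raises total $-\chi$. What actually drops is your second coordinate. Since $\partial E$ is essential in $F$, the disc $E'$ is not contained in $F$. So $E'$ contains an entire component of $S\cap\mathcal{T}^{(2)}$, which disappears, and no new intersections with $\mathcal{T}^{(1)}$ or $\mathcal{T}^{(2)}$ are created because $E$ lies in the interior of $t$.

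\textbf{Step 2: the tetrahedral bigon.} This is not ``as before''. The disc of $\partial t$ cobounded by an arc of the curve and a segment $\beta\subseteq e$ runs through several faces and crosses other edges. It may also contain other curves of $S\cap\partial t$. So you cannot simply isotope across it. You need to take $\beta$ innermost among segments of $e$ joining two points of one curve, so that no other curve crosses $\beta$. Then push the disc into $t$ so that it meets $\partial t$ only in $\beta$, and isotope it off the remaining pieces, which are now all discs. Only then do the weight-reducing move; this is the paper's last case. With these repairs your proof agrees with the paper's.
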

\begin{proof}
    Take an embedding of $S$ in $M \setminus A$ in this isotopy class with minimal weight (the number of points at which $S$ meets the 1-skeleton of $\mathcal{T}$), and subject to this restriction, the minimum possible number of components of intersection between $S$ and a tetrahedron that are not elementary normal discs.
    
    We claim that $S$ is normal. To show this, we explain how, if $S$ is not normal, then we can either reduce its weight or eliminate a non-normal-disc without increasing its weight. Since no component of $S$ lies in a ball in $M \setminus A$, any component of intersection of $S$ and a tetrahedron will have boundary.

    Suppose $S$ is not normal. Then there is a tetrahedron $\Delta$ where some component of $S \cap \Delta$ is either not a disc or is a disc but is not an elementary normal disc (meaning it meets no edges or meets some edge more than once).

    In the first case, we take an innermost component $\gamma$ of $S \cap \partial \Delta$ among those which are boundary components of non-discs of $S \cap \Delta$. Then $\gamma$ bounds a disc $D \subseteq \Delta$ with $D \cap S = \gamma$. This $D$ is disjoint from $A$ since $S$ is, so $D$ is not a compressing disc for $S$ and $\gamma$ must bound a disc $D' \subseteq S$. Now $D \cup D'$ is a sphere, so by irreducibility of $M \setminus A$ it bounds a ball disjoint from $A$. This ball's interior is disjoint from $S$, since otherwise some component of $S$ lies inside it, but we assumed that no component of $S$ lies in a ball in $M \setminus A$. Thus we can isotope $S$ over this ball, which has the effect of replacing $D'$ with $D$. If the non-disc belonged to $D'$, we have reduced the number of non-discs. If it did not, then we can push $D$ into $\Delta$ slightly. If $\gamma$ meets the 1-skeleton of $\mathcal{T}$, the weight of $S$ is reduced. Otherwise, the piece of $S$ that was on the other side of $\gamma$ was not a normal disc and has been eliminated, reducing the number of components of intersection between $S$ and a tetrahedron that are not elementary normal discs.

    In the case where some component of $S \cap \Delta$ is a disc meeting no edges, we take such a disc $D'$ with innermost boundary $\gamma$ in this face. Then $\gamma$ bounds a disc $D \subseteq \partial \Delta$ with $D \cap S=\gamma$. We can isotope $D'$ to $D$ and push it out of $\Delta$ slightly, reducing the number of discs that are not elementary normal discs (no component of $S$ lies between $D$ and $D'$ since no component lies in a ball in $M \setminus A$).

    Now assume that first two cases do not occur. In the final case, where some disc of $S \cap \Delta$ meets the same edge $e$ more than once, we take an innermost segment $\beta$ of $e$ that connects two points of the same circle $\gamma$ of $S \cap \partial \Delta$. Then there is $\alpha \subseteq \gamma$ so that $\beta \cup \alpha$ is a circle in $\partial \Delta$, which bounds a disc $D \subseteq \partial \Delta$. We push $D$ into $\Delta$ slightly, and since the other problematic cases do not occur, we may isotope it off of any other components of $S \cap \Delta$ that it meets. Let $D'$, $\alpha'$ be the new positions of $D$ and $\alpha$ after this isotopy. We have $D' \cap \partial \Delta = \beta$ and $D' \cap S = \alpha'$. By isotoping $S$ over a ball neighbourhood of $D'$, we may decrease the weight of $S$ by two (the two points $\alpha \cap \beta$ of $S \cap \mathcal{T}^{(1)}$ are eliminated).

    Therefore $S$ must be a normal surface, since we have seen that if it is not, this contradicts our minimality assumptions.
\end{proof}

By using Lemma \ref{lem:normalise} with $A$ = $L$ and $S$ = $T$, we will be able to put our collection of tori $T$ into normal form when necessary.

\subsection{PL-admissible surfaces}

The notion of a PL-admissible surface is defined in Section 6 of \cite{unknot}, and we emulate this approach here, although less details will be necessary since we need only consider the case of a closed surface. The idea is that by having a surface be in something close to admissible form, while also being normal and piecewise linear, we can use normal surface theory to gain control over its foliation.

First, since each tetrahedron of $\mathcal{T}$ is a join of two 1-cells, we can give each of these 1-cells the structure of a Euclidean straight line, so that each tetrahedron is Euclidean, and all of the gluing maps between tetrahedra are isometries.

Consider a closed normal surface $S$ with respect to our triangulation $\mathcal{T}$ of $S^3$. We can make this surface piecewise linear as follows.

First, we can isotope the surface so that each arc of intersection between $S$ and a 2-simplex is a straight line, without moving their endpoints. Next, each normal triangle can be taken to be flat (since its boundary is three straight arcs forming a loop in Euclidean space, the boundary is contained in a flat plane, and our triangle will also be contained in this plane). Normal squares will be realised as a pair of flat triangles meeting along a straight line between two opposite vertices of
the square. If we have this line run between the same two edges of the tetrahedron for all of the normal discs it contains, these squares will be disjoint. In the case where a normal square meets both $S_\phi^1$ and $S_\theta^1$, we will have this dividing line run from $S_\phi^1$ to $S_\theta^1$.

The result of this procedure is a surface in normal form with every normal triangle being flat, and every normal square being a pair of flat triangles. This surface has a singular foliation given by its intersection with each page $D_t$ . An example of some piecewise-linear normal discs with their foliations is shown in Figure \ref{fig:PLAdmissibleDiscs}.

\begin{figure}
    \centering
    \includegraphics[width=0.7\linewidth]{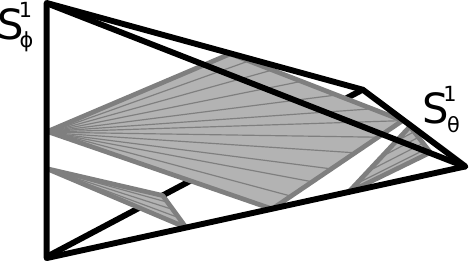}
    \caption{Some piecewise linear normal discs with their foliations. The closure of each page $D_t$ that meets the tetrahedron meets it in a flat triangle containing all of the edge in $S_\phi^1$, and meeting the edge in $S_\theta^1$ at a single point.}
    \label{fig:PLAdmissibleDiscs}
\end{figure}

After this procedure is carried out, we say that $S$ is in \emph{PL-admissible form}. This is not the original definition given in \cite{unknot}, but the surface we obtain will be PL-admissible in that sense.

The kinds of singularities that can arise in this foliation are similar to those that arise in the foliations of truly admissible surfaces. If a point of $S$ has a neighbourhood which is a disc foliated by arcs, we call it nonsingular. The points where $S$ intersects $S_\phi^1$ are called \emph{vertices}, and have leaves emerging from them in all directions. Points which have a disc neighbourhood where no other points have the same value of $\theta$ are called \emph{poles}. Points which have a disc neighbourhood in which the set of points having the same value of $\theta$ is a star-shaped graph are called \emph{generalised saddles}.

Essentially the singularities that can arise are the same as in the admissible case with the exception that saddles may have more than four separatrices emerging from them (though it will always be an even number). The layout of the foliation is related nicely to the normal structure of $S$ as follows.

\begin{lemma}
\label{lem:separatricesInSquares}
    For a closed surface $S$ in PL-admissible form, vertices of the foliation occur at precisely the points of $S \cap S_\phi^1$, poles and generalised saddles of the foliation can only occur at points of $S \cap S_\theta^1$, and every separatrix of the foliation is a diagonal of a single normal square, running from $S_\theta^1$ to $S_\phi^1$.
\end{lemma}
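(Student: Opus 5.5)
The plan is a local analysis inside each tetrahedron $\Delta$ of $\mathcal{T}$. Since $\mathcal{T}$ is the join of the two triangulated circles, $\Delta$ is the join of an edge $e_\phi \subseteq S^1_\phi$ and an edge $e_\theta \subseteq S^1_\theta$. The coordinate $\theta$ restricted to $\Delta$ is the linear parameter along $e_\theta$, extended by the join structure. Equivalently, $\Delta \cap \overline{D_t}$ is the flat triangle spanned by $e_\phi$ and the single point of $e_\theta$ at parameter $t$, as in Figure \ref{fig:PLAdmissibleDiscs}. In the Euclidean structure on $\Delta$, $\theta$ is therefore an affine function on $\Delta \setminus e_\phi$ whose level sets are planes containing $e_\phi$. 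The whole proof comes down to restricting this affine function to each flat piece of $S$.

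First I would dispose of the vertices. Because $S$ is normal, it meets $S^1_\phi$ transversely in finitely many interior points of edges. Near such a point, $S$ is a union of flat pieces, one in each tetrahedron around that edge, and each piece contains the point. The pages $D_t$ form a pencil of half-planes around the edge, and each one meets $S$ in a single ray from the point. So the point is a vertex with leaves leaving in every direction. Conversely, vertices occur only on $S^1_\phi$, because away from $S^1_\phi$ the pages are locally a product.

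Next I would show there are no singularities in the interior of a flat piece, or in the interior of a face or edge of $\mathcal{T}$ off $S^1_\theta \cup S^1_\phi$. On each flat triangle, $\theta$ is affine. It is nonconstant unless the triangle lies in a single page, and that cannot happen: each page-triangle contains all of $e_\phi$, and a normal disc cannot contain a vertex of $\Delta$. An affine nonconstant function has no critical points in the interior of a flat region, so the foliation there is by parallel segments. Along an interior edge of $\mathcal{T}$, or along the diagonal of a square, two flat pieces meet, and I would check that $\theta$ is monotone in the same transverse direction on both sides. This holds because both pieces cross the same family of half-planes through $e_\phi$, so the union is still regularly foliated. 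The one subtle place is where a piece touches $S^1_\theta$. There $\theta$ attains the parameter of the intersection point, and the level set through that point can branch. This is exactly where poles and generalised saddles live. Hence all non-vertex singularities lie in $S \cap S^1_\theta$.

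Finally, for the separatrices, take a singular point $p \in S \cap S^1_\theta$, lying on the edge $e_\theta$ of some tetrahedra. Consider a normal disc $d$ in such a $\Delta$ through $p$. If $d$ is a triangle, it cuts off a vertex of $\Delta$; if it meets $e_\theta$ and not $e_\phi$, then the level set of $\theta(p)$ in $d$ is the single point $p$. If $d$ is a square separating $e_\phi$ from $e_\theta$, it meets both edges, and by construction it is split along the diagonal from $S^1_\theta$ to $S^1_\phi$. The level set $\{\theta = \theta(p)\}$ in $d$ is the intersection of $d$ with the page-triangle spanned by $e_\phi$ and $p$. That intersection is exactly this diagonal, since both endpoints of the diagonal lie in that plane and both flat halves are transverse to it. So every leaf through $p$ is a diagonal of a normal square, running from $p$ to a vertex on $S^1_\phi$, and a separatrix lies in a single square. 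I expect the main obstacle to be the bookkeeping in this last step. One has to enumerate the normal disc types according to which of $e_\phi, e_\theta$ they meet, and check that squares of the other two types, which meet only one of the two edges, contribute no level-set segment through $p$. It also has to be ruled out that a level set runs along a face rather than through a square, which I would handle using the transversality established in the previous paragraph.
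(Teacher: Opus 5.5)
Your plan is the same local analysis the paper carries out. However, the step you single out as the crux rests on a misclassification of normal squares, and as literally stated it would fail.

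Write $e_\phi = AB$ and $e_\theta = CD$. The square that separates $e_\phi$ from $e_\theta$, of type $\{A,B\}|\{C,D\}$, meets only the edges $AC$, $AD$, $BC$, $BD$. It meets \emph{neither} binding edge and never passes through $p$. The other two types, $\{A,C\}|\{B,D\}$ and $\{A,D\}|\{B,C\}$, each meet both $AB$ and $CD$. No square meets exactly one of these two edges; only triangles do.

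So the check you propose, that the ``other two types'' contribute no level-set segment through $p$, is false. Those are exactly the squares whose $S^1_\theta$-to-$S^1_\phi$ diagonal is a separatrix. The correct bookkeeping is short. A normal disc through $p \in e_\theta$ is one of two kinds:
\begin{enumerate}
\item A triangle cutting off $C$ or $D$. Its two other corners lie strictly on one side of the page-plane through $A$, $B$, $p$, so it meets that page only at $p$.
\item A square of type $\{A,C\}|\{B,D\}$ or $\{A,D\}|\{B,C\}$. Its diagonal from $p$ to a point of $AB$ lies in that page, while its two remaining corners lie on opposite sides of it. So its level set through $p$ is exactly that diagonal.
\end{enumerate}
With this correction your argument is the paper's.

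There are two smaller points.

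First, the function $\theta$ is not affine on $\Delta \setminus e_\phi$. Its level sets are a pencil of planes through the line containing $e_\phi$, which no affine function can have. So on a flat piece the leaves are segments of a pencil of lines, not parallel segments; in a half-square they radiate from the corner on $e_\phi$. Nonsingularity in the interior of a flat piece still holds, since the axis of the pencil meets such a piece at most in a corner.

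Second, at a point of $S$ on a join edge $AC$, four normal discs meet rather than two. The edge lies entirely in the page $\theta = \theta(C)$, with two of the surrounding tetrahedra to its past and two to its future, and these tetrahedra have two different $\phi$-edges. Your ``same pencil on both sides'' argument does not cover this case. As in the paper, the leaf through such a point is the union of the two normal arcs in the two faces containing $AC$ that lie in that page, and the point is nonsingular.
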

\begin{proof}
    Since $S$ meets $S_\phi^1$ transversely, vertices of the foliation are precisely points of $S \cap S_\phi^1$. Points in the interior of any normal disc are nonsingular, so any singularities must occur at points on the 2-skeleton of $\mathcal{T}$. One can see that if a point of $S$ lies in the 2-skeleton but not the 1-skeleton, it belongs to two normal discs, and will be nonsingular. If it lies on the 1-skeleton, but not on $S_\phi^1$ or $S_\theta^1$, then it is surrounded by four normal discs. Near to this point, the leaf containing it is given by the line segment along which two of these discs meet together with the line segment along which the other two discs meet, and the point is non-singular. Thus generalised saddles and poles can occur only at points of $S \cap S_\theta^1$.

    Now consider a separatrix of the foliation. This is a leaf that is incident to a generalised saddle. This generalised saddle is a point $s$ of $S \cap S_\theta^1$, and one can see that the only leaves ``emerging" from $s$ are diagonals of normal squares, and the other end of such a diagonal is a point of $S \cap S_\phi^1$, hence a vertex of the foliation. In other words, the saddle $s$ is contained in $\alpha$ different normal discs. Each of these discs that are normal triangles have no points other than $s$ in the same page as $s$, while each of these discs that are
    squares have many points in the same page as $s$, and these points form a leaf directly from $s$ to a vertex, so they form a separatrix.
\end{proof}

\pagebreak
In fact in our setting things are even simpler.

\begin{lemma}
\label{lem:noPoles}
    If $L$ is a link which is simplicial in $\mathcal{T}$ in an arc presentation with arc index $\alpha$ (so that it visits every 0-simplex exactly once and consists of $2\alpha$ 1-simplices each of which runs from one circle to the other), and $S$ is a surface disjoint from $L$ in PL-admissible form, then $S$ has no poles.
\end{lemma}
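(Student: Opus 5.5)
By Lemma \ref{lem:separatricesInSquares}, any pole of the foliation sits at a point $p \in S \cap S_\theta^1$. So the plan is to take such a point, assume it is a pole, and derive a contradiction from the local normal structure of $S$ near $p$ together with the fact that $S$ misses $L$.

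Let $e$ be the $1$-simplex of $S^1_\theta$ containing $p$, in its interior. The tetrahedra around $e$ are exactly the $\alpha$ joins $e * f$, where $f$ ranges over the $1$-simplices of $S^1_\phi$. Each of these tetrahedra contains one normal disc of $S$ through $p$; as in the proof of Lemma \ref{lem:separatricesInSquares}, $p$ lies in $\alpha$ normal discs. Such a disc is either a triangle cutting off an endpoint of $e$, or a square separating $e$ from $f$. The latter kind meets $f$ and so contributes a separatrix from $p$ to a vertex on $f$. It follows that $p$ is a pole exactly when every one of these $\alpha$ discs is a triangle.

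Suppose $p$ is a pole. Consider the tetrahedra $e*f$ in cyclic order around $e$. Adjacent tetrahedra share the triangle $e * x$, where $x$ is a $0$-simplex of $S^1_\phi$, and the discs through $p$ must match along the normal arc of $S$ in $e*x$ that starts at $p$. That arc ends either on the edge from $x$ to one endpoint of $e$, or on the edge from $x$ to the other. A triangle cuts off a single endpoint of $e$, so every disc around $p$ cuts off the same endpoint. Call it $y$, the endpoint on the side toward which all these triangles face. Then the arcs of $S$ through $p$ cut off $y$ in every face $e * x$, and the union of the triangles is a small disc in $S$ separating $y$ from the other endpoint of $e$ inside the star of $e$. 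Along the way I will check that the two ends of this ring of triangles really do close up consistently; this is the step I expect to be fiddliest.

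Next I use $L$. The link is simplicial and visits every $0$-simplex exactly once. In particular it passes through $y$, arriving along a $1$-simplex $x_1 y$ and leaving along $y x_2$, where $x_1, x_2$ are vertices on $S^1_\phi$. This means $L$ contains two $1$-simplices from $y$ to $S^1_\phi$. The other endpoint $y'$ of $e$ also lies on $L$, with its own two $1$-simplices to $S^1_\phi$. I do not expect the disc around $p$ alone to meet $L$, so the contradiction has to come from the weight of $S$ on edges. The configuration near $p$ says that the normal coordinates of $S$ at $e$ and at each edge $xy$ are related: the triangles at $p$ pass from $e$ to every edge $xy$. In particular they meet the edges $x_1y$ and $x_2y$, which lie in $L$. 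This is impossible, since $S$ is disjoint from $L$.

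So the main point to verify is that each triangle through $p$ cutting off $y$ in $e*f$ really meets all three edges at $y$, namely $e$, $yx$ and $yx'$ where $f = xx'$. This holds by the definition of a normal triangle. Since every $0$-simplex $x$ of $S^1_\phi$ arises as an endpoint of some $f$, the triangles at $p$ collectively meet every edge $xy$, and in particular the edge of $L$ at $y$. That is the contradiction, so $S$ has no poles.
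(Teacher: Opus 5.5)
Your proposal is correct and follows essentially the paper's route: all $\alpha$ discs through the pole $p$ are triangles that cut off a common endpoint $y$ of the $S^1_\theta$-edge $e$, and in a tetrahedron $e*f$ with $f \ni x_1$ the triangle must cross the $L$-edge $yx_1$, contradicting $S \cap L = \emptyset$. Three parts of your write-up can be dropped or fixed. Your remark that the disc around $p$ should not meet $L$, along with the detour through normal coordinates and weights, is unnecessary and contradicts your own conclusion. The ``closing up'' you worry about is automatic, because matching across each shared face $e*x$ is a local condition on the given surface. Finally, the squares through $p$ are the two types meeting both $e$ and $f$, not a square ``separating $e$ from $f$''.
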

\begin{proof}
    First observe that in a general PL-admissible surface in $\mathcal{T}$, a pole $p$ must lie on $S \cap S_\theta^1$ and it can be incident to no normal squares (since then it would not be alone in its leaf). Thus it is surrounded in all $\alpha$ tetrahedra by normal triangles, and since these triangles glue together along their edges, they all separate a common 0-simplex $v$ from the rest of the vertices of the relevant tetrahedron. See Figure \ref{fig:pole} for a diagram.

    In our situation, this 0-simplex $v$, like all 0-simplices of $\mathcal{T}$, is a point of the link $L$, and exactly two of the 1-simplices leading from $v$ to $S_\phi^1$ are contained in $L$. As a result, some normal discs surrounding $p$ are not disjoint from $L$, contradicting our assumption.
    
    Thus $S$ can have no poles.
\end{proof}

\begin{figure}
    \centering
    \includegraphics[width=0.7\linewidth]{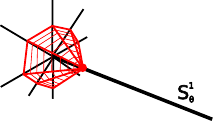}
    \caption{When a surface is in PL-admissible form with respect to $\mathcal{T}$, poles take the form of a point on $S_\theta^1$ incident in all $\alpha$ surrounding tetrahedra to a triangular normal disc.}
    \label{fig:pole}
\end{figure}

We saw in Section \ref{sec:admissible} that if the foliation of an admissible surface contains a vertex with valence less than 4, an isotopy can be performed reducing its binding weight. The same is true of PL-admissible surfaces. The following is Proposition 6.10 in \cite{unknot}. There it is stated for so-called ``deep" vertices, but since our PL-admissible surface is closed, all vertices are ``deep". Also there it is stated for not-necessarily-closed surfaces properly embedded in a polyhedral decomposition of the link exterior, while our surfaces are closed and embedded in $\mathcal{T}$ (but can also be thought of as being PL-admissible in this polyhedral decomposition of $S^3 \setminus N(L)$). Finally it gives bounds on the number of moves required to perform these reductions, which we will not need. Thus the following is a greatly weakened (but sufficient for our purposes) version of that Proposition.

\pagebreak
\begin{proposition}
\label{prop:makeStandard}
    Let $D$ be an arc presentation of a link $L$. Let $S$ be a PL-admissible surface with respect to $\mathcal{T}$.
    \begin{enumerate}
        \item If $S$ contains a 2-valent vertex, then there is a generalised exchange move on $D$, followed by an ambient isotopy of the link complement, taking $S$ to a surface $S'$ with
        $w_\beta(S') \leq w_\beta(S)-2$.
        \item If $S$ contains a 3-valent vertex, then there is a sequence of cyclic permutations, exchange moves, a generalised exchange move, and ambient isotopies of the link complement taking $S$ to a surface $S'$ with $w_\beta(S') \leq w_\beta(S)-2$.
    \end{enumerate}
\end{proposition}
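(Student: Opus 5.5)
The plan is to transfer the admissible-surface moves of Lemma \ref{lem:valence<4} (Dynnikov's Lemma 6 in \cite{monotonicSimplification}) to the PL setting, following Proposition 6.10 of \cite{unknot}. The input is Lemma \ref{lem:separatricesInSquares}: every separatrix is a diagonal of a normal square running from $S_\theta^1$ to $S_\phi^1$, and by Lemma \ref{lem:noPoles} there are no poles. So the star of a vertex $v$ is a finite union of square diagonals meeting generalised saddles on $S_\theta^1$, and the valence of $v$ is the number of such diagonals. First I would smooth $S$ slightly away from $L$. A generalised saddle with $2k$ separatrices can be perturbed into $k-1$ ordinary saddles lying in nearby pages. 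This perturbation does not change the foliation near $v$, so $S$ becomes a genuinely admissible surface in which $v$ keeps its valence.

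For part (1), $v$ has two separatrices. These lead to two saddles, and the two tiles around $v$ give neighbouring vertices $v_1,v_2$. Since the star of $v$ is the union of two tiles, there is a leaf from $v$ to a neighbour that sweeps out a disc in $S$ together with a region of pages. That region can contain points of $L$ on $S_\phi^1$ only between $v$ and $v_1$ (or $v_2$). I would move those vertices of $D$ out of the way by an exchange move. Because a whole block of consecutive vertices of $D$ may have to move past $v$, this is a generalised exchange move in the sense of \cite{unknot}. After that, $v$ and $v_1$ are adjacent on $S_\phi^1$. An isotopy supported near the connecting leaf then pushes $S$ across $S_\phi^1$, as described in Section \ref{subsec:foliationEffect}, removing two vertices, so $w_\beta(S')\le w_\beta(S)-2$. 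Since the surface is closed, every vertex is ``deep'', so the boundary cases in \cite{unknot} do not arise.

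For part (2), $v$ has odd valence three. Hence two adjacent saddles in its star have the same orientation. The hexagon move of Section \ref{subsec:foliationEffect} changes the diagonal of the hexagon and lowers the valence of $v$ to two. To realise it, events must be reordered in time, and arcs of $D$ lying in the relevant pages must be pushed past one another, which needs cyclic permutations and exchange moves. Then part (1) applies.

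The main obstacle is the bookkeeping for the link. One has to check that the arcs of $D$ that get in the way of these isotopies can always be moved by (generalised) exchange moves rather than stabilisations, and that after any perturbation the surface can be restored to PL-admissible form with respect to a retriangulation. Here I would rely directly on the argument of \cite{unknot}, Proposition 6.10, noting that its hypotheses hold in our closed case.
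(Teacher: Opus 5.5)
Your proposal matches the paper, which gives no independent argument: it invokes Proposition 6.10 of \cite{unknot}, notes that every vertex of a closed surface is deep, and drops the move counts. The paper also notes that a closed surface PL-admissible in $\mathcal{T}$ can be viewed as PL-admissible in Lackenby's polyhedral decomposition of $S^3\setminus N(L)$. Your preliminary smoothing step is not needed for that citation, and it is slightly misstated. Splitting a generalised saddle with $2k\ge 6$ separatrices into $k-1$ ordinary saddles adds $2k-4$ separatrix ends at neighbouring vertices. So if that saddle lies in the star of $v$, you must choose the splitting so that the arc from $v$ takes part in only one of the new events; otherwise the valence of $v$ can increase.
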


\begin{corollary}
\label{cor:standard}
    If $S$ is a PL-admissible surface in the complement of an arc presentation $D$ of a link $L$ such that $S$ has minimal binding weight among all embeddings in its isotopy class and all arc presentations of $L$ with the same arc index as $D$, then $S$ cannot have any vertices of valence 2 or 3.
\end{corollary}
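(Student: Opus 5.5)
This is a direct contradiction argument using Proposition \ref{prop:makeStandard}. Suppose, for contradiction, that $S$ is PL-admissible in the complement of $D$, realises the minimal binding weight described in the statement, and still has a vertex of valence $2$ or $3$.

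Case (1) of Proposition \ref{prop:makeStandard} handles a $2$-valent vertex, and case (2) handles a $3$-valent vertex. In either case we obtain a sequence of moves on $D$ together with an ambient isotopy of the link complement. The moves are cyclic permutations, exchange moves, and one generalised exchange move. The isotopy carries $S$ to a surface $S'$ with $w_\beta(S') \leq w_\beta(S) - 2$.

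To reach a contradiction, I need to check that $S'$ is a competitor in the minimisation. That means two things.
\begin{itemize}
\item The moves produce an arc presentation $D'$ of $L$ whose arc index equals that of $D$. None of these moves is a stabilisation. Cyclic permutations and exchange moves preserve arc index by definition, and a generalised exchange move, as in \cite{unknot}, is a composite of elementary moves that does not change the number of arcs.
\item $S'$ lies in the same isotopy class as $S$. This holds because $(D',S')$ is obtained from $(D,S)$ by an ambient isotopy of $S^3$: the moves on $D$ are themselves realised by ambient isotopies that carry $S$ along, and they are followed by an isotopy of the complement.
\end{itemize}
So $S'$ is an embedding in the isotopy class of $S$, in the complement of an arc presentation of $L$ with the same arc index, and it has strictly smaller binding weight. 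This contradicts the minimality of $S$.

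The main point needing care is whether "binding weight" is being compared within a single class of surfaces. Proposition \ref{prop:makeStandard} may output $S'$ that is admissible but not PL-admissible with respect to the new triangulation. I would read the minimality in the statement as ranging over all admissible or PL-admissible positions of the surface, so that the number of points meeting $S^1_\phi$ is the quantity being minimised. Alternatively, I would note that \cite{unknot} produces $S'$ in PL-admissible form with respect to the triangulation associated to $D'$. With either reading, the contradiction stands, so no vertices of valence $2$ or $3$ exist.
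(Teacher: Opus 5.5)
Your proposal is correct and matches the paper's intended argument: the corollary is stated as an immediate consequence of Proposition~\ref{prop:makeStandard}. A $2$- or $3$-valent vertex would yield, via moves that preserve arc index and via an isotopy of the pair, a surface with binding weight at least $2$ smaller, contradicting minimality. Your extra care about whether $S'$ remains a valid competitor is more than the paper spells out, but it is consistent with how minimality is used in Proposition~\ref{prop:bindingBound}.
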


\subsection{Generalised parallelity bundles}

The final piece of normal surface theory we require will be the theory of generalised parallelity bundles. This will allow us to find a family of curves that fill each component of our surface while travelling across a controlled number of normal discs. A more thorough exposition of the topic can be found in Section 5 of \cite{compositeKnots} or Section 6 of \cite{triangulationFibred}. There generalised parallelity bundles are defined for normal surfaces in handle structures rather than triangulations, but the theory still works in our setting.

The idea is that when there are several normal discs of the same type in a tetrahedron, the space between two adjacent such discs forms a \emph{parallelity region}. By taking the union of all such regions in a particular component of the complement of the surface, we obtain a \emph{parallelity bundle} which is an $I$-bundle over some compact space $F$. The \emph{horizontal boundary} is the $\partial I$ bundle, and is a union of normal discs. The base space $F$ may not exactly be a surface, for instance if two parallelity regions meet at a corner and none of the other surrounding regions are parallelity regions. Even if $F$ is not a surface, a small neighbourhood of the bundle in the complement of the normal surface will be an $I$-bundle over a surface. We will frequently think of the bundle as being over this surface, although technically it may not be until we thicken it. Similarly the horizontal boundary may not be a subsurface of the normal surface, but we think of it as being one anyway, by taking a neighbourhood of it in the normal surface if necessary.

The reason this is helpful is because the number of normal discs of a component of the surface that do not appear in the horizontal boundary of the parallelity bundle in an adjacent component of the complement is bounded linearly in terms of the number of tetrahedra in the triangulation. This is because each tetrahedron can contain at most ten normal discs that aren't normally parallel to another normal disc in that tetrahedron on both sides.

Thus, if we can argue that curves we care about can be taken to lie outside of this horizontal boundary, we can obtain bounds on the number of normal discs they pass through, and therefore the number of separatrices they cross.

In order for us to establish the existence of important curves lying outside the horizontal boundary, we will need to be more permissive about what we consider to be a parallelity bundle.

\pagebreak
\begin{definition}
    Let $\mathcal{T}$ be our triangulation of $S^3$ in which $L$ is simplicial, and let $T$ be a collection of satellite tori in normal form. Then a \emph{generalised parallelity bundle} $\mathcal{B}$ is a subspace of one of the components of $S^3$ cut along $T$ such that
    \begin{enumerate}
        \item $\mathcal{B}$ (or a neighbourhood of it) is an $I$-bundle over a compact surface $F$.
        \item the $\partial I$-bundle is the intersection of this $I$-bundle with $T$.
        \item $\mathcal{B}$ is a union of components of tetrahedra cut along the normal discs of $T$.
        \item any such piece that the $I$-bundle over $\partial F$ runs alongside is a parallelity region between two parallel normal discs.
        \item $\mathcal{B}$ does not intersect $L$.
    \end{enumerate}    
\end{definition}

Roughly speaking, a generalised parallelity bundle looks like a parallelity bundle at the edges, but there may be parts of the interior that are not made of parallelity regions but nevertheless form an $I$-bundle.

This notion is useful because, as proved in \cite{compositeKnots}, as long as the surface $T$ does not admit a certain kind of simplifying move called an \emph{annular simplification}, the behaviour of a maximal generalised parallelity bundle is quite nice.

The definition of what exactly is considered an annular simplification is complicated and can be found in definition 5.4 of \cite{compositeKnots}, or in Section 6.4 of \cite{triangulationFibred}. What is important for our situation is that when an annular simplification is available, performing it involves replacing some annular subsurface $A$ of $T$ with a vertical boundary component of a generalised parallelity bundle, which will be another annulus with the same boundary as $A$. The surface obtained after the simplification is ambient isotopic to the surface before the simplification. If an annular simplification is available, performing it will strictly decrease the weight of the surface (this is Lemma 6.15 in \cite{triangulationFibred}). The idea is that the annulus we have replaced is now a union of vertical boundary components of parallelity regions between normal discs, so we can isotope it near the 1-skeleton of $\mathcal{T}$ to eliminate intersections of $T$ with the 1-skeleton.

In this way, annular simplifications strictly decrease the weight $|T \cap \mathcal{T}^{(1)}|$ of the surface, while not increasing its binding weight, and so by taking a least-weight representative of a surface we can assume none are available.

Proposition 5.6 and Corollary 5.7 in \cite{compositeKnots} gives us control over the bundles when no annular simplifications are available.

\begin{proposition}
\label{prop:bundles}
    Let $L$ and $T$ be an arc presentation of a non-split link and a normal collection of satellite tori as above, and suppose that $T$ admits no annular simplifications. Then if $M$ is one of the components of $S^3 \setminus T$, there is a generalised parallelity bundle $\mathcal{B}$ in $M$ such that $\mathcal{B}$ contains every parallelity region in $M$, and the horizontal boundary of $\mathcal{B}$ is incompressible in $M \setminus L$.
\end{proposition}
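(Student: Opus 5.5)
The plan is to adapt the argument of Proposition 5.6 and Corollary 5.7 of \cite{compositeKnots} to our triangulation $\mathcal{T}$ with $L$ removed. Fix a component $M$ of $S^3 \setminus T$. Start from the union $\mathcal{B}_0$ of all parallelity regions in $M$ that are disjoint from $L$. Every parallelity region is automatically disjoint from $L$: $L$ lies in the 1-skeleton, and a parallelity region between two parallel normal discs meets the 1-skeleton only in sub-arcs of edges strictly between the discs. If such a sub-arc were part of $L$, then $T$ would meet $L$, which is impossible. So $\mathcal{B}_0$ is a union of $I$-bundle pieces glued along vertical faces, and after thickening it is a (possibly disconnected) parallelity bundle, hence a generalised parallelity bundle.

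Next, among all generalised parallelity bundles in $M$ that contain every parallelity region, choose $\mathcal{B}$ maximal. I would maximise first the number of pieces of $\mathcal{T}$ cut along $T$ that it contains, and subject to that minimise $-\chi$ of its base surface. I would then show that the horizontal boundary $\partial_h\mathcal{B}$ is incompressible in $M\setminus L$.

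Suppose instead that a component of $\partial_h\mathcal{B}$ has a compressing disc $E$ in $M \setminus L$. Since $T$ is a collection of JSJ (in particular incompressible) satellite tori in the exterior of $L$, the curve $\partial E$ bounds a disc $E'$ in $T$. Here $E'$ is not contained in $\partial_h \mathcal{B}$, because $\partial E$ is essential in $\partial_h\mathcal{B}$. The sphere $E \cup E'$ bounds a ball in $S^3\setminus L$, since $L$ is non-split and $S^3 \setminus L$ is irreducible. Inside this ball, the $I$-bundle structure of $\mathcal{B}$ near the relevant component of $\partial_h\mathcal{B}$ can be extended across the region between $E'$ and the corresponding part of $\mathcal{B}$. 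There are two ways to arrange this. One is to enlarge $\mathcal{B}$ by the pieces lying in the ball, which fill a product region $D^2\times I$. The other is to realise the region as a vertical annulus of $\mathcal{B}$ together with $E'$, in which case replacing $E'$ by that annulus and a disc gives an annular simplification.

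In the first case $\mathcal{B}$ was not maximal. In the second case the move reduces the weight, contradicting the hypothesis that no annular simplification is available. This is exactly the dichotomy in \cite{compositeKnots}. The only new point is condition (5), and this holds because the ball lies in $S^3 \setminus L$, so any pieces we add avoid $L$.

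The main obstacle is checking that the enlarged space still satisfies condition (4), i.e.\ that along $\partial F$ it still runs alongside genuine parallelity regions. In \cite{compositeKnots} this is handled by the notion of an annular simplification: whenever the vertical boundary of the enlarged bundle fails (4), the vertical annulus bounds, together with an annulus of $T$, a region to which the simplification applies. I would verify that this argument only uses the handle structure locally. It should therefore transfer verbatim to $\mathcal{T}$ cut along $T$, with $L$ treated as a forbidden part of the 1-skeleton, in the same way as the set $A$ in Lemma \ref{lem:normalise}.
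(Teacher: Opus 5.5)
Your proposal takes essentially the same route as the paper. The paper also just invokes Proposition~5.6 and Corollary~5.7 of \cite{compositeKnots}: take a maximal generalised parallelity bundle containing all parallelity regions, use non-splitness of $L$ for irreducibility, and show that a compressing disc for $\partial_h\mathcal{B}$ would either let you plug a hole in $\mathcal{B}$ or exhibit an annular simplification.

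The mechanism in your sketch needs tidying, though. The region to add, or the annulus to replace, should not be taken from the ball bounded by $E\cup E'$. That ball can contain bundle and non-bundle pieces in a non-product arrangement, and $E$ is not a union of faces of pieces. Instead, use the vertical boundary annulus $A$ of $\mathcal{B}$ at a boundary curve of the relevant component of $\partial_h\mathcal{B}$ that lies in $E'$. Both curves of $\partial A$ bound discs in $T$ by incompressibility: if these discs are disjoint they give a ball $D^2\times I$ to add, and if nested they give an annulus of $T$ with the same boundary as $A$. In the plugging case, condition (4) is automatic, because the new vertical boundary is a subset of the old one.
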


The idea behind this proof is that if the horizontal boundary of $\mathcal{B}$ were to have a compressing disc, then this disc could be used to either find an available annular simplification (which we prevent by assumption) or to ``plug a hole" in $\mathcal{B}$. This second situation can be prevented by taking a maximal generalised parallelity bundle containing all the parallelity regions in $M$.

Non-splitness of $L$ is necessary so that each $M$ is irreducible, which is an important assumption for the relevant results in \cite{compositeKnots}.

The horizontal boundary is homeomorphic to the orientable double cover of the base surface $F$ of the bundle. Thus if a component of the horizontal boundary was a torus, then some component of $F$ must be a torus or Klein bottle. There cannot be a Klein bottle component since $F$ is embedded in $S^3$. If $F$ were to have a torus component, then since $T \times I$ has no vertical boundary it would occupy all of $M$. But $M$ cannot be $T \times I$ since our tori cannot be parallel to each other (they bound disjoint solid tori and are not unknotted). Thus the horizontal boundary has no tori and instead is a collection of discs and non-trivial annuli in $T$.

Any vertical boundary component of a non-disc region of the bundle will be essential in the relevant piece of the link exterior cut along $T$, since a compressing disc would give a compressing disc of an annulus in the horizontal boundary, and if it was boundary-parallel, an annular simplification would be available.

\subsection{Binding weight bound}

With all of these tools, we are ready to bound the binding weight.

\begin{proposition}
\label{prop:bindingBound}
    Let $L$ be a satellite link with arc index $\alpha$, and $T$ be the collection of JSJ tori in its exterior. Then we can isotope $(L,T)$ so that $L$ is an arc presentation with arc index $\alpha$, and $T$ is PL-admissible with all vertices and saddles having valence four and with binding weight $w_\beta(T) \leq 2\alpha^4$.
\end{proposition}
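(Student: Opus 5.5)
Put $L$ simplicially in $\mathcal{T}$ as an arc presentation of arc index $\alpha$. Among all positions of $T$ in $S^3\setminus L$ (isotopic to the JSJ tori) and all arc presentations of $L$ of index $\alpha$, choose one that is normal with respect to $\mathcal{T}$ and in PL-admissible form. Minimise first the binding weight and then the weight $|T\cap\mathcal{T}^{(1)}|$.

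Normality is available by Lemma \ref{lem:normalise} with $A=L$. The complement $S^3\setminus L$ is irreducible because $L$ is non-split. The tori are incompressible in the link exterior. No component lies in a ball in $S^3\setminus L$, since no $V_i$ is unknotted. By Lemma \ref{lem:noPoles} there are no poles. By Corollary \ref{cor:standard} there are no vertices of valence 2 or 3. By minimality of weight, no annular simplification is available, because such a move lowers the weight without raising the binding weight.

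The Euler characteristic count of Section \ref{subsec:standardFoliation} then has to be redone for generalised saddles. Let a generalised saddle have valence $2k\ge 4$. The tile decomposition into squares still exists, because Lemma \ref{lem:separatricesInSquares} says separatrices join saddles to vertices. Repeating the computation gives
\[\sum_{\text{vertices}}(4-\mathrm{val})+\sum_{\text{saddles}}(4-\mathrm{val})=0.\]
Every term is nonpositive once valence-2 and valence-3 vertices are excluded, so every term is zero. Hence all vertices and all saddles have valence exactly four, which is the first claimed property.

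\textbf{Bounding the binding weight.} Each vertex of $T$ is a point of $T\cap S^1_\phi$, lying on some edge of $S^1_\phi$. By Lemma \ref{lem:separatricesInSquares}, each of its four separatrices is a diagonal of a normal square. So the binding weight is controlled by the number of normal squares that meet $S^1_\phi$. I would bound the normal discs of $T$ outside the horizontal boundary of the parallelity bundles. Each of the $\alpha^2$ tetrahedra contributes at most a bounded number of such discs (at most ten per side), giving $O(\alpha^2)$ discs in total.

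The key remaining step, and the main obstacle, is controlling vertices that sit inside the horizontal boundary of the maximal generalised parallelity bundles $\mathcal{B}$ given by Proposition \ref{prop:bundles}. By that proposition and the discussion after it, each horizontal boundary component is a disc or an essential annulus in $T$. Its vertical boundary annuli are essential in the JSJ pieces. Using the JSJ hypothesis, such annuli are limited (the pieces adjacent to $T$ are Seifert fibred or atoroidal), and I would argue the following. In a disc component of the horizontal boundary, the standard valence-four foliation together with the bundle structure over a disc forces the vertices inside it to pair up across the $I$-fibres in a way that yields a simplifying move, such as a valence-reducing exchange or an annular simplification, contradicting minimality. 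So vertices in disc components are bounded by the number of boundary discs. For annular components, I would find a core curve crossing few normal discs and show that vertices along the annulus repeat periodically with period bounded by the number of non-parallel discs. Minimality then again forbids long runs of parallel vertices.

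Counting: there are at most $O(\alpha^2)$ non-parallel discs, each adjacent to at most about $\alpha^2$ parallel copies along the circle edges (edges of $S^1_\phi$ have valence $\alpha$). This should give $w_\beta\le 2\alpha^4$ after tracking constants. If this parallelity argument fails, the fallback is to bound the weight of a least-weight normal JSJ torus directly by fundamental-surface bounds. Those are exponential, however, so the parallelity route is the one to push.
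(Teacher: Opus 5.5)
The set-up matches the paper: the same minimisation (binding weight, then weight), normalisation via Lemma \ref{lem:normalise}, no poles, no valence-2 or valence-3 vertices, no annular simplifications, and the Euler characteristic count forcing all vertices and generalised saddles to have valence four. The gap is in the binding-weight bound, which is the real content of the proposition. None of your heuristics for it holds up.

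\begin{itemize}
\item The count ``each non-parallel disc is adjacent to at most about $\alpha^2$ parallel copies, since edges of $S^1_\phi$ have valence $\alpha$'' confuses two things. Edge valence is the number of tetrahedra around an edge, not the number of parallel normal discs inside a tetrahedron. Nothing in the triangulation bounds the latter, and bounding it is exactly what has to be proved.
\item You give no mechanism by which a large disc of the horizontal boundary would yield a simplifying move contradicting minimality.
\item You give no argument forcing vertices in an annular component to repeat with bounded period.
\item You never isolate where the JSJ hypothesis enters. It is essential: without it no bound of this kind can hold, since some links have infinitely many isotopy classes of satellite tori.
\item The fundamental-surface fallback gives only exponential bounds.
\end{itemize}

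The missing idea is that you never need to count vertices inside the horizontal boundary at all. Let $s_i$ be the number of normal squares of $T_i$ that are not parallel on both sides to other squares. Each tetrahedron contains at most two such squares, so $\sum_i s_i\le 2\alpha^2$. Let $H_1,H_2\subset T_i$ be the horizontal boundaries of the bundles on the two sides of $T_i$; by Proposition \ref{prop:bundles} these are discs and essential annuli. Use them to find two filling simple closed curves $\gamma_1,\gamma_2$ on $T_i$. Each lies in the complement of some $H_j$ and visits each normal disc there at most once (remove revisits by splicing). Since separatrices are diagonals of normal squares, each curve then crosses at most $s_i$ separatrices. There are three cases.
\begin{itemize}
\item If some $H_j$ consists only of discs, take a non-trivial curve in its complement and a dual curve meeting it once.
\item If both $H_j$ contain annuli of different slopes, take curves of those two slopes.
\item If both contain annuli of the same slope, the adjacent vertical boundary annuli are essential on both sides with equal slope. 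Then both neighbouring JSJ pieces are Seifert fibred with fibres agreeing on $T_i$, contradicting that $T_i$ is JSJ. This is exactly where the JSJ hypothesis is used.
\end{itemize}
Finally, valence four lets you give $T_i$ a flat metric in which every tile is a unit square. The complementary discs of $\gamma_1\cup\gamma_2$ then have total perimeter at most $4s_i$. The rectilinear isoperimetric inequality $16A\le L^2$ bounds the area of $T_i$ by $s_i^2$, and that area is twice the number of vertices. Hence $w_\beta(T_i)\le s_i^2/2$, and so $w_\beta(T)\le(2\alpha^2)^2/2=2\alpha^4$.
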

\begin{proof}
    Take $(L,T)$ in their isotopy class that first of all has $L$ simplicial in $\mathcal{T}$ in an arc presentation of arc index $\alpha$, which minimises the binding weight $w_\beta(T)$ subject to this constraint, and which minimises the weight $w(T)$ (the number of intersections between $T$ and the 1-skeleton $\mathcal{T}^{(1)}$) subject to the first two constraints.

    Due to Lemma \ref{lem:normalise}, we can isotope $T$ in $S^3 \setminus L$ to make it normal, and then we can straighten it to be PL-admissible as we described above, and we may now speak of the foliation $\mathcal{F}$.
    
    From Lemma \ref{lem:noPoles}, we know $\mathcal{F}$ has no poles. We know from Corollary \ref{cor:standard} and our minimality assumptions that $\mathcal{F}$ has no vertices of valence less than four. We know from Lemma \ref{lem:separatricesInSquares} that no separatrix runs from a generalised saddle to itself. Then the separatrices form a graph in $\mathcal{F}$ whose complementary regions are all squares. By a similar Euler characteristic argument to the one we used in Section \ref{sec:admissibleTori}, since there are no vertices or saddles incident to less than four separatrices, there must be none incident to more than four separatrices. Thus in the foliation $\mathcal{F}$, all vertices and saddles have valence four.

    We refer to the components of $T$ as $T_1$, $T_2$, ..., $T_m$, and denote by $V_i$ the solid torus bounded by $T_i$. 

    By our minimality assumptions, no annular simplifications are available (since they decrease $w(T)$ without increasing $w_\beta(T)$). So from Proposition \ref{prop:bundles}, we have a generalised parallelity bundle $\mathcal{B}_i$ in each solid torus $V_i$, and another $\mathcal{B}_0$ in the final component $S^3 \setminus \bigcup_i V_i$, such that each contains every parallelity region in its component and has incompressible horizontal boundary.
    
    Each tetrahedron contains some number of normal squares, at most two of which are not parallel on both sides to other normal squares. Thus there are at most $2 \alpha^2$ such squares in total. Let $s_i$ be the number of such squares that are part of $T_i$. Then we have $\sum_i s_i \leq 2\alpha^2$.
    
    \textbf{Claim:} For each $i$, there is a pair of simple closed curves $\gamma_1,\gamma_2$ on $T_i$ that fill $T_i$ (meaning $T_i \setminus (\gamma_1 \cup \gamma_2)$ is a collection of discs) such that each $\gamma_j$ is disjoint from the singularities of $\mathcal{F}$ and crosses at most $s_i$ separatrices.

    \textbf{Proof of claim:} Let $H_1$, $H_2$ be the incompressible subsurfaces of $T_i$ that are the intersection of the horizontal boundaries of $\mathcal{B}_i$ and $\mathcal{B}_0$ with $T_i$.
    
    If one of these (call it $H$) is a collection of discs, then $G = T_i \setminus H$ is a torus with some holes. In particular, $G$ contains a simple closed curve which is non-trivial in $T_i$. We call this curve $\gamma_1$, and we can assume that it is transverse to the decomposition of $G$ into normal discs. We may also assume that it passes through each normal disc of $G$ at most once by the following argument: if it visits some disc more than once, we can ``splice" the curve in this disc to form two curves, and one of these must be non-trivial in $T_i$, so we can replace $\gamma_1$ with that one. This reduces the number of revisits, and so by repeating this process we find a non-trivial $\gamma_1$ with no revisits at all.

    Since $\gamma_1$ is non-trivial in $T_i$, it does not separate $G$, and so we can take another curve $\gamma_2$ that is transverse to the decomposition of $G$ into normal discs and meets $\gamma_1$ exactly once. By a similar argument to the above we can assume it visits each normal disc of $G$ at most once.

    The normal squares of $G$ are not adjacent in their tetrahedron on both sides to other normal squares, because if they were they would lie in $H$. Thus $G$ contains at most $s_i$ normal squares. Then each $\gamma_j$ visits at most $s_i$ normal squares, and since separatrices of $\mathcal{F}$ are diagonals of normal squares (Lemma \ref{lem:separatricesInSquares}), both $\gamma_j$ can be isotoped to cross at most $s_i$ separatrices. Also they fill $T$ since they meet exactly once.

    If neither $H_j$ is a collection of discs, then they both contain at least one non-trivial annulus. All of the annuli in $H_j$ are disjoint, so they must have the same slope. Consider first the case where the annuli-slopes for $H_1$ and $H_2$ are distinct. In this case, $G_j = T \setminus H_j$ is a collection of annuli with holes. Any of these will contain a curve that is non-trivial in $T_i$, which will have a slope agreeing with the annuli of $H_j$. As above we may take this curve to visit each normal disc of $G_j$ at most once, and call the curve $\gamma_j$.

    Then as above each $\gamma_j$ crosses at most $s_i$ separatrices of $\mathcal{F}$. Since their slopes differ (by assumption), $\gamma_1$ and $\gamma_2$ fill $T_i$.

    Finally, consider the case where both $H_j$ contain annuli of the same slopes. Then the vertical boundary components of $\mathcal{B}_i$ and $\mathcal{B}_0$ that are incident to the annular components of $H_j$ are essential annuli on both sides of $T_i$ with the same boundary slope. Since $T_i$ is a JSJ torus, the presence of these annuli indicates that the components of the JSJ decomposition of $S^3 \setminus L$ incident to $T_i$ on both sides are Seifert fibred (since hyperbolic pieces do not contain essential annuli). The annuli may be made vertical (a union of fibres) due to Proposition 1.12 of \cite{hatcher3M} (note that the only Seifert fibred spaces in which an annulus could be horizontal cannot appear in a JSJ decomposition, since JSJ tori are incompressible and non-parallel). The fact that the boundary slopes of these annuli agree means that the fibres of these Seifert fibred pieces agree on $T_i$, but then $T_i$ is not a JSJ torus, contradicting our assumption. The claim is proved. $\square$

    So we have the pair $\gamma_1$, $\gamma_2$ as described above. Since $\mathcal{F}$ has all vertices and saddles of valence four, the complement of the separatrices in $T_i$ is a collection of square tiles with four around each vertex or saddle. Thus we can put a Euclidean metric on $T_i$ such that each of these tiles is a right-angled unit square. Since $\gamma_1$, $\gamma_2$ cross at most $s_i$ separatrices, we can isotope them to be transverse, rectangular (a sequence of straight lines with right-angled corners) and have length at most $s_i$ with respect to this metric, as depicted in Figure \ref{fig:rectangularCurves}.

    \begin{figure}
        \centering
        \includegraphics[width=0.6\linewidth]{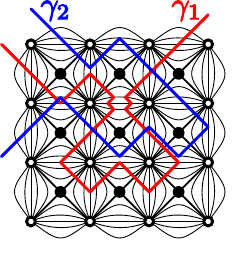}
        \caption{One of many ways in which our two curves can be made transverse and rectangular.}
        \label{fig:rectangularCurves}
    \end{figure}

    Since the curves fill $T$, the complement $T\setminus(\gamma_1 \cup \gamma_2)$ is a collection of discs with total perimeter $2l(\gamma_1)+2l(\gamma_2) \leq 4s_i$. By the Euclidean isoperimetric inequality, a disc's area $A$ and perimeter $L$ satisfy $4\pi A \leq L^2$, but we can get a slightly sharper bound if the disc's boundary is a rectangular curve.
    
    Consider a disc in $\mathbb{R}^2$ whose boundary is a rectangular curve (a sequence of horizontal and vertical line segments). The difference between the smallest and greatest vertical coordinates of the boundary is at most half of the total length of all the vertical arcs, which we call $v$. The difference between the smallest and greatest horizontal coordinates of the boundary is at most half of the total length of all the horizontal arcs, which we call $h$. Then the disc is contained in a $v/2$ by $h/2$ rectangle, so its area is at most $vh/4$, and $v+h = L$. Maximising $vh$ subject to $v+h = L$, we find that $16A \leq L^2$.

    Using this improved (16 is a slightly better constant than $4\pi$) isoperimetric inequality, we find that $T_i$ is a union of discs $D_j$ with areas $A_j$ satisfying $16A_j \leq L_j^2$ and $\sum L_j \leq 4 s_i$. The area of $T_i$ is $\sum A_j$, and the maximum possible value this can take is $s_i^2$.

    The area of $T_i$ with respect to this Euclidean metric is the number of square tiles in the foliation of $T_i$, which is exactly twice the number of vertices of $\mathcal{F}$ on $T_i$. Thus, the binding weight of $T_i$ is at most $s_i^2/2$.

    Then since we know that $\sum s_i \leq 2\alpha^2$, the binding weight $w_\beta$ of $T$ is at most $\sum s_i^2/2 \leq (2\alpha^2)^2/2 = 2\alpha^4$.
\end{proof}

We are now ready to prove Theorem \begin{NoHyper}\ref{thm:satelliteDiagram}\end{NoHyper}.

\pagebreak
\begin{repeatThmSatelliteDiagram}
    If $L$ is a non-split link of arc index $\alpha$ contained in a disjoint union of solid tori $V = \bigsqcup V_i$ such that every $T_i = \partial V_i$ is JSJ in the exterior of $L$ and no $V_i$ is unknotted, then there is an isotopy of $(L,T)$ in $S^3$ such that afterwards $L$ is in an arc presentation of arc index at most $\frac{1}{2}\alpha^5+2\alpha$ and the corresponding grid diagram is a satellite diagram of a grid diagram corresponding to a collection of core curves of each $V_i$.
\end{repeatThmSatelliteDiagram}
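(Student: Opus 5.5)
The proof assembles the results of Sections \ref{sec:admissible}--\ref{sec:bindingWeight} in order, keeping track of the arc index at each step.

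\textbf{Step 1: initial position.} First apply Proposition \ref{prop:bindingBound}. This gives an isotopy after which $L$ is an arc presentation of arc index $\alpha$ and $T$ is PL-admissible with binding weight $w_\beta(T)\le 2\alpha^4$. A small perturbation makes $T$ genuinely admissible without changing its intersection with $S^1_\phi$, so the binding weight is still at most $2\alpha^4$.

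\textbf{Step 2: simplify to a standard foliation.} Apply Lemma \ref{lem:noPolesOrClosedLeaves} to remove poles, closed leaves and self-joining separatrices. Then apply the moves of Lemma \ref{lem:valence<4}, Corollary \ref{cor:valence4AdjacentSaddles} and Section \ref{subsec:emptySemis} until none is available. None of these moves raises the arc index, and each strictly reduces the binding weight, so the process terminates with binding weight still at most $2\alpha^4$. By Section \ref{subsec:standardFoliation}, $T$ now has standard foliation.

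\textbf{Step 3: two cases.}
\begin{itemize}
\item If every component of $T$ has a repeated arc, Proposition \ref{prop:toNarrowPosition} puts $T$ into narrow position at no cost, and the arc index stays at $\alpha$.
\item Otherwise, run the meridian-shrinking procedure of Section \ref{subsec:shrinkDisc} on each $V_i$ that lacks a repeated arc. This uses Lemma \ref{lem:twoPageMeridian}, which requires $T_i$ knotted (true by hypothesis), followed by Proposition \ref{prop:redirect}. By Section \ref{subsec:arcIndexIncrease} the arc index rises by at most $\frac{\alpha w_\beta}{4}+\alpha\le \frac{\alpha\cdot 2\alpha^4}{4}+\alpha=\frac12\alpha^5+\alpha$, giving total arc index at most $\frac12\alpha^5+2\alpha$.
\end{itemize}
After the second case, every component has a meridian of length 2 in the vertex graph. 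Standardise again with the non-increasing simplifying moves; the lemma at the end of Section \ref{sec:noRepeatedArcs} guarantees the length-2 meridians survive. Proposition \ref{prop:toNarrowPosition} then gives narrow position with no further increase in arc index.

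\textbf{Step 4: read off the diagram.} Section \ref{subsec:narrowPosition} shows that narrow position makes the grid diagram of $L$ a satellite diagram. After shrinking the shaded segments and band durations and pushing arcs inside semicircles into band times, it follows the grid diagram of the core arc presentation $L_C$. These last isotopies do not change the arc index.

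\textbf{Main obstacle.} The substantive content lies in the cited results, so the proof is mostly bookkeeping. The step needing the most care is checking that the hypotheses carry through each stage. The redirection count must use the binding weight at the moment Section \ref{subsec:shrinkDisc} is applied, and this is still at most $2\alpha^4$ because every earlier move only decreases binding weight. The tori must remain knotted, pairwise non-parallel, and incompressible in $S^3\setminus L$ throughout, which holds because every move is an isotopy of the pair $(L,T)$. Non-splitness of $L$ is needed for Lemma \ref{lem:noPolesOrClosedLeaves} and Proposition \ref{prop:bundles}.
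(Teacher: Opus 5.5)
Your proposal is correct and follows the paper's proof. The steps match: Proposition \ref{prop:bindingBound} for an initial position with $w_\beta\le 2\alpha^4$, then the meridian-shrinking of Section \ref{sec:noRepeatedArcs} at a cost of at most $\frac14\alpha w_\beta+\alpha\le\frac12\alpha^5+\alpha$, then re-standardisation preserving the length-2 meridians, and finally Proposition \ref{prop:toNarrowPosition}. The only difference is that you standardise (and apply the empty-semicircle moves) before shrinking, whereas the paper shrinks immediately after perturbing the PL-admissible torus; this is harmless, since those preliminary moves never increase the binding weight or the arc index.
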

\begin{proof}
    Proposition \ref{prop:bindingBound} tells us that we can position $L$ and $T$ such that $L$ is an arc presentation with arc index $\alpha$ and $T$ is PL-admissible with binding weight at most $2\alpha^4$.

    We perturb $T$ slightly to make it smooth and admissible, and then we use the techniques we have described throughout the paper to simplify $T$ until it is in narrow position. First we proceed as described in Section \ref{sec:noRepeatedArcs}, introducing a length 2 meridian to each $T_i$. Then we use the simplifying moves described in Section \ref{subsec:simplifyingMoves} to standardise the foliation. Finally we proceed as described in Section \ref{sec:repeatedArcs} and $T$ will now be in narrow position. None of these techniques increase the arc index of $L$ except for the one described in Section \ref{sec:noRepeatedArcs}, which as we note in Section \ref{subsec:arcIndexIncrease} will increase the arc index by at most $\frac{1}{4}\alpha w_\beta + \alpha \leq \frac{1}{2}\alpha^5+\alpha$. Together with the $\alpha$ arcs we begin with, we see that once $T$ has been simplified into narrow position, the arc index of $L$ will be at most $\frac{1}{2}\alpha^5 + 2\alpha$.

    As mentioned in Section \ref{subsec:narrowPosition}, once $T$ is in narrow position, by just pushing some arcs of $L$ through time we obtain an arc presentation of $L$ whose associated grid diagram $D$ is a satellite diagram following the grid diagram corresponding to an arc presentation of the core curves of $V$.
\end{proof}

\section{Bounding the complexity of the satellite}
\label{sec:bound}

Theorem \ref{thm:satelliteDiagram} gives us an opportunity to establish a relationship between the complexity of a satellite link and the complexities of the companion and the pattern, as well as the wrapping number.

The theorem guarantees the existence of a satellite diagram of our satellite link with an upper bound on complexity. In this section we put a lower bound on its complexity by examining the structure of a satellite diagram.

The structure of a satellite diagram is shown in Figure \ref{fig:satelliteDiagram}.

\begin{figure}
    \centering
    \includegraphics[width=0.9\linewidth]{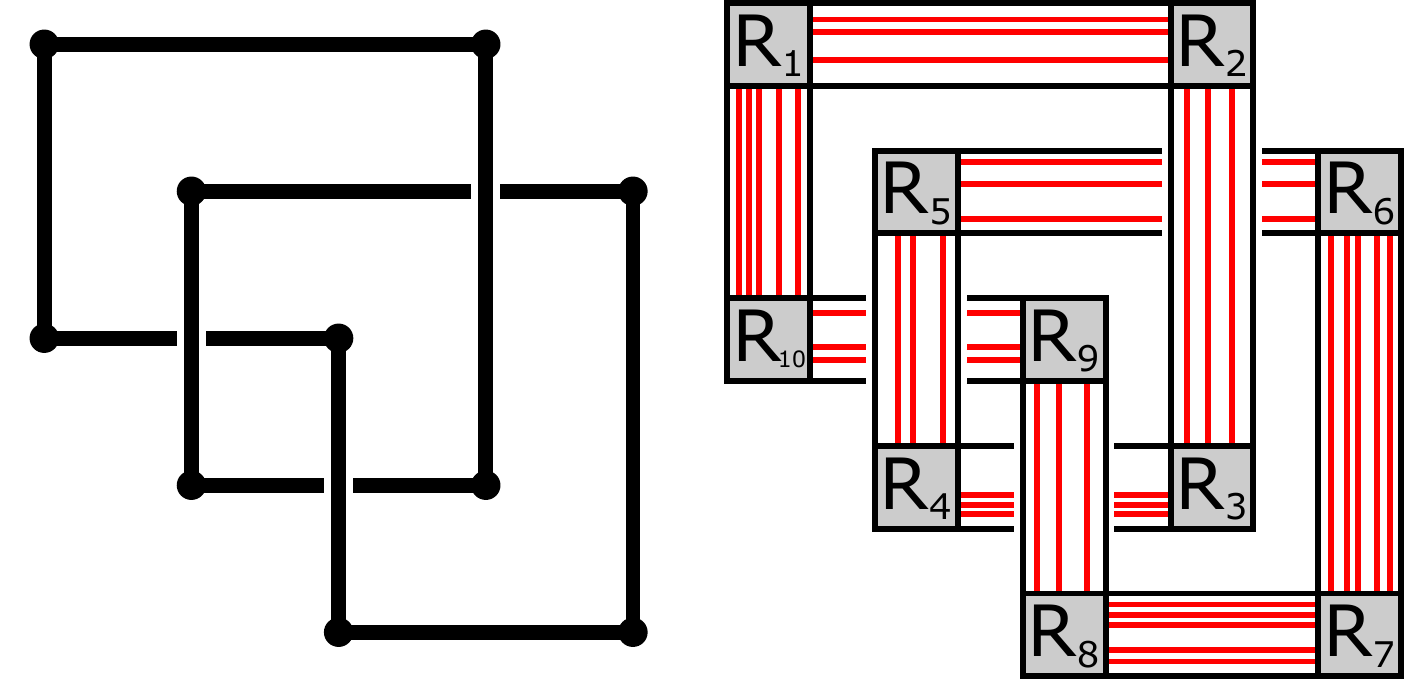}
    \caption{On the left is a grid diagram $D$ of the trefoil knot. On the right is the form of a satellite diagram following it. A satellite diagram consists of arbitrary configurations within the shaded neighbourhoods $R_i$ of the corners of $D$, and collections of parallel arcs following the neighbourhoods of the arcs of $D$.}
    \label{fig:satelliteDiagram}
\end{figure}

We now put a lower bound on the crossing number of this diagram. As can be seen from Figure \ref{fig:satelliteDiagram}, crossings will be of two kinds. Some crossings appear inside the shaded squares, while others appear outside of these squares in groups where the crossings of the companion diagram were. We count the second type first.

\begin{definition}
    The \emph{wrapping number} $w$ of a satellite knot is a property of the pattern $P$, and is defined to be the minimal number of points of intersection between $P$ and any meridian disc of the solid torus. A satellite link in a union of several solid tori may have a different wrapping number in each one, and we denote by $w$ the minimum of these.
\end{definition}

The satellite diagram $D'$ has a parallel collection of arcs following each arc of the companion diagram $D$. Each arc of $D$ is followed by at least $w$ parallel arcs of $D'$.

This is because if some arc of $D$ had fewer than $w$ arcs of $D'$ following it, we would be able to find a meridian disc in the corresponding part of the solid torus that is pierced by $L$ fewer than $w$ times. This is impossible by definition of $w$.

Therefore, each crossing of $D$ gives a region of $D'$ in which at least $w$ parallel vertical arcs cross over at least $w$ parallel horizontal arcs. So there are at least $w^2c(D)$ crossings of this kind, which is at least $w^2c(C)$, where $C$ is the companion link.

Now consider the crossings of the other kind. These occur inside the neighbourhoods of the corners of $D$. ``Deforming" and ``unwinding" the diagram as illustrated in Figure \ref{fig:unwindPattern} yields a link diagram of the pattern living in a disjoint union of annuli (that represent the solid tori), with some framings, in which all the crossings come from the shaded squares. Thus the number of such crossings is at least $c(P)$, by which we mean the minimal crossing number of any diagram of the pattern $P$ that can be drawn in a union of annuli representing the solid tori, with any framing.

\begin{figure}
    \centering
    \includegraphics[width=1\linewidth]{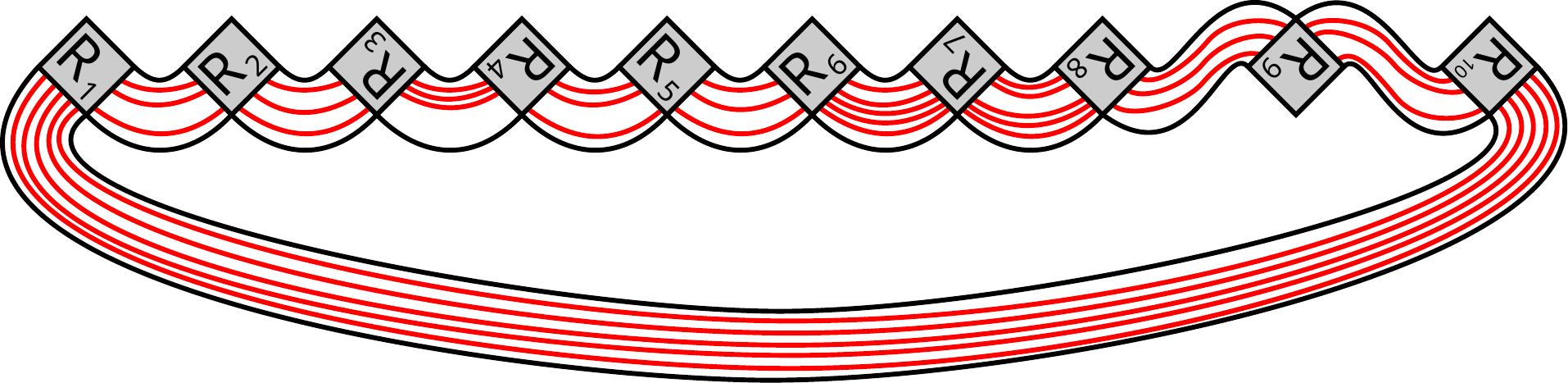}
    \caption{The satellite diagram from Figure \ref{fig:satelliteDiagram}, unwound to produce a knot diagram for the pattern knot $P$, drawn in an annulus representing the solid torus it lies in. The regions $R_i$ are unchanged, though they are moved and rotated, and all crossings in this diagram occur inside of these regions. If the companion had more than one component, this unwinding would yield several annular diagrams which we would place side by side.}
    \label{fig:unwindPattern}
\end{figure}

So altogether, a satellite grid diagram with companion link $C$ and pattern link $P$ contains at least $w^2c(C)+c(P)$ crossings.

In addition to crossing number bounds, we can also get arc index bounds. Observe that $D'$ has arc index at least $w\alpha(C)$, since each horizontal arc of $D$ becomes a set of at least $w$ horizontal arcs in $D'$, and $D$ has arc index at least $\alpha(C)$ since it is a grid diagram of $C$. It is also the case that $D'$ has arc index at least $\alpha(P)$, and this can be seen as follows:

Any grid diagram can be converted to the standard grid diagram of an unlink if we allow ourselves to perform not just destabilisation and the usual exchange moves but also ``illegal" row and column exchange moves in which we swap two rows or columns even if they interleave. If we perform such an ``illegal" move on $D$ and perform the corresponding operation on $D'$, the effect is depicted in Figure \ref{fig:illegalExchange}. It is as if we homotope $V$ in $S^3$, allowing it to pass through itself. This will change the link type of $D'$, but importantly it will not change the way $L$ sits inside $V$ up to homeomorphism (the framing may change), nor will it increase the arc index of $D'$.

\begin{figure}
    \centering
    \includegraphics[width=1.0\linewidth]{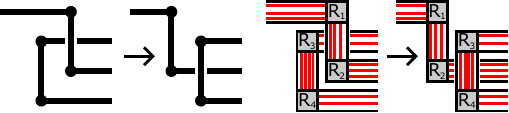}
    \caption{This ``illegal column exchange" alteration to $D$ (on the left) and $D'$ (on the right) changes the link type but does not change the way $L$ sits in $V$.}
    \label{fig:illegalExchange}
\end{figure}

If we use a sequence of these unrestricted moves to reduce $D$ to the standard unlink diagram with the appropriate number of components, and perform the corresponding moves on $D'$, we will obtain a diagram of $P$ as a satellite of the standard unlink diagram with the same number of arcs as $D'$. Thus $D'$ has at least $\alpha(P)$ arcs. 

Some final results we need to get our bounds relate arc index and crossing number. In Section 5 of \cite{Cromwell_Nutt_1996}, it is argued that a grid diagram with arc index $\alpha$ can have at most $\alpha^2/2$ crossings; the uppermost horizontal arc can cross no vertical ones, the next can cross two (since only two vertical arcs reach that high), and so on increasing until the central arc(s), then decreasing again in the lower half. The actual bound obtained is slightly tighter, and is a different expression depending on the parity of $\alpha$, but we will use $\alpha^2/2$ for simplicity. It is also argued that by redirecting horizontal arcs ``the other way around" if doing so would reduce the number of crossings, one can find that for any link $L$, $c(L) \leq \alpha(L)^2/4$.

Also, in Section 4 of \cite{Cromwell_Nutt_1996} is the theorem that if a link diagram $D$ satisfies certain conditions, it can be used to build a grid diagram of the same link with arc index at most $c(D)+2$. In \cite{monotonicSimplification}, at the start of the proof of Theorem 2, it is mentioned that the same method shows that if $D$ is any connected diagram, there is a grid diagram for the same link with arc index at most $2c(D)+2$ (the idea of the proof is to take a knot diagram and draw a binding circle onto it). This shows that for any non-split link $L$, $\alpha(L) \leq 2c(L)+2$.

Now we are ready to prove Theorem \begin{NoHyper}\ref{thm:JSJbound}\end{NoHyper}.

\begin{repeatThmJSJBound}
    If $L$ is a non-split link contained in a disjoint union of solid tori $V = \bigsqcup V_i$ such that every $T_i = \partial V_i$ is JSJ in the exterior of $L$ and no $V_i$ is unknotted, and $C$, $P$, and $w$ are the corresponding companion link, pattern link, and minimal wrapping number around any $V_i$, then
    \[
        w^2c(C)+c(P) \leq \frac{1}{7}(2c(L)+2)^{10}
    \]
    and
    \[
        \max(w\alpha(C),\alpha(P)) \leq \frac{1}{2}\alpha(L)^5 + 2 \alpha(L)
    \]
    where by $c(P)$, $\alpha(P)$ we mean the minimal crossing number or arc index of a (grid) diagram of $P$ drawn in a collection of disjoint annuli (we place no restriction on the framing with which $P$ is drawn).
\end{repeatThmJSJBound}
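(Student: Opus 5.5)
Both inequalities follow from Theorem \ref{thm:satelliteDiagram} together with the lower bounds on satellite grid diagrams from this section. The only other inputs are the Cromwell--Nutt conversions between crossing number and arc index.

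\textbf{Arc index bound.} Let $\alpha=\alpha(L)$. By Theorem \ref{thm:satelliteDiagram} there is a grid diagram $D'$ of $L$ with at most $\frac12\alpha^5+2\alpha$ arcs. It is a satellite diagram following a grid diagram $D$ of the companion $C$, given by core curves of the $V_i$. From the discussion above, $D'$ has at least $w\alpha(C)$ arcs, since each horizontal arc of $D$ is followed by at least $w$ parallel arcs. It also has at least $\alpha(P)$ arcs, by the illegal-exchange argument reducing $D$ to the standard unlink diagram. Combining these gives $\max(w\alpha(C),\alpha(P))\le \frac12\alpha^5+2\alpha$.

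\textbf{Crossing number bound.} The same $D'$ has at least $w^2c(C)+c(P)$ crossings, by the count of clusters at crossings of $D$ plus crossings inside the corner regions $R_i$ after unwinding. On the other hand, a grid diagram with $N$ arcs has at most $N^2/2$ crossings. Hence
\[
w^2c(C)+c(P)\le \tfrac12\bigl(\tfrac12\alpha^5+2\alpha\bigr)^2 .
\]
Since $L$ is non-split, $\alpha\le 2c(L)+2$. Write $x=2c(L)+2$; then $x\ge 2$, and in fact $x\ge 8$ since a satellite link with a knotted companion has $c(L)\ge 3$. It remains to check
\[
\tfrac12\bigl(\tfrac12x^5+2x\bigr)^2=\tfrac18x^{10}+x^6+2x^2\le \tfrac17 x^{10},
\]
which is equivalent to $x^6+2x^2\le \frac{1}{56}x^{10}$. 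For $x\ge 8$ we have $x^4/56\ge 4096/56>2$, so this holds comfortably.

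\textbf{Main point of care.} There is no real obstacle, because the heavy lifting is in Theorem \ref{thm:satelliteDiagram}. Two things need checking. First, the final numeric inequality needs a lower bound on $x$; I would justify $x\ge 8$ through $c(L)\ge 3$, which holds since no $V_i$ is unknotted. A weaker bound such as $x\ge 4$ also suffices: then $x^4/56\ge 256/56>4$, while $1+2x^{-4}\le 1.01$. Second, the monotonicity step $\alpha\le x$ applies inside the increasing polynomial $\frac12\alpha^5+2\alpha$, which is valid.
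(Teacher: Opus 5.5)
Your proposal is correct and is essentially the paper's proof: apply Theorem \ref{thm:satelliteDiagram} to get a satellite diagram $D'$, bound its crossings by half the square of its arc index (Cromwell--Nutt) together with $\alpha(L)\le 2c(L)+2$, and compare with the lower bounds $w\alpha(C)$, $\alpha(P)$ and $w^2c(C)+c(P)$ for satellite diagrams. Your expansion $\frac18x^{10}+x^6+2x^2\le\frac17x^{10}$ spells out the numerical step that the paper only asserts, and as you note it already holds for $x\ge 4$, i.e.\ $c(L)\ge 1$ (true because $L$ is not the unknot), so your terser justification of $c(L)\ge 3$ is not actually needed.
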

\begin{proof}
    From Theorem \ref{thm:satelliteDiagram}, we know that $(L,T)$ can be isotoped into narrow position, giving a satellite diagram $D'$ with arc index at most $\frac{1}{2}\alpha(L)^5+2\alpha(L)$. By the result from \cite{Cromwell_Nutt_1996} and the fact that $\alpha(L) \leq 2c(L)+2$, we find that

    \[
        c(D') \leq \frac{1}{2}\left(\frac{1}{2}\alpha(L)^5+2\alpha(L)\right)^2
        \leq \frac{1}{2}\left(\frac{1}{2}(2c(L)+2)^5+2(2c(L)+2)\right)^2
    \]
    which is at most $\frac{1}{7}(2c(L)+2)^{10}$ for $c(L) \geq 1$ (and grows asymptotically like $\frac{1}{8}(2c(L)+2)^{10}$).

    However, we know by the discussion earlier that $D'$ has at least $w\alpha(C)$ arcs, at least $\alpha(P)$ arcs, and at least $w^2c(C)+c(P)$ crossings, so we obtain

    \[
        w^2c(C)+c(P) \leq \frac{1}{7}(2c(L)+2)^{10}
    \]
    and
    \[
        \max(w\alpha(C),\alpha(P)) \leq \frac{1}{2}\alpha(L)^5 + 2 \alpha(L).
    \]
\end{proof}

\section{Extending to non-JSJ satellite tori}
\label{sec:nonJSJ}

In previous sections we have established a lower bound on the complexity of a satellite in terms of the complexity of the companion link, the wrapping number, and the complexity of the pattern, but only in the case where these arise from a collection of satellite tori which are all JSJ in the link exterior.

In this section we show that specifically in the satellite knot case (when $L$ has only one component, and $T$ is a single torus), we can gain some control over what happens when $T$ is not a JSJ torus.

To do this we need results about the structure of the JSJ decomposition of knot complements. Such results can be found in Budney's survey \cite{budney2006}. In particular, Theorem 4.18 from this paper gives us the following:
    
\begin{proposition}
\label{prop:SFStypes}
    In the JSJ decomposition of a knot exterior, the Seifert fibred pieces are of the following forms:
    \begin{itemize}
        \item The base orbifold is a disc with two exceptional fibres.
        \item The base orbifold is an annulus with one exceptional fibre.
        \item The space is a circle bundle over a disc with holes.
    \end{itemize}
\end{proposition}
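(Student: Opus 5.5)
This proposition is quoted from Budney's classification, so rather than reprove it from scratch I would derive it from the standard structure theory of knot exteriors. The route runs through the dual graph of the JSJ decomposition of $X = S^3 \setminus N(K)$ and the constraints $S^3$ places on each Seifert fibred piece $M$.

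First, I would record the topological constraints on $M$. The JSJ tori of a knot exterior in $S^3$ are separating, and the dual graph is a rooted tree whose root is the piece containing $\partial N(K)$. Each piece $M$ therefore embeds in $S^3$, has at least one boundary torus, and every one of its boundary tori bounds, on the side away from $M$, either a knot exterior or the solid torus $N(K)$. From this I would deduce:
\begin{enumerate}
    \item $M$ is orientable and embeds in $S^3$, so the base orbifold $B$ is an orientable planar surface: a disc with holes.
    \item Each exceptional fibre can be neither too plentiful nor hidden in the interior in arbitrary ways.
\end{enumerate}

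Second, the key step: show that the number of exceptional fibres is at most $2$ if $B$ is a disc, at most $1$ if $B$ is an annulus, and $0$ if $B$ has three or more boundary components. For this I would fill each boundary torus of $M$ back in by the solid torus or knot exterior it bounds in $S^3$, and then use the fact that $H_1$ of a knot exterior is $\mathbb{Z}$, generated by its meridian. Replacing each knot-exterior side by a solid torus with matching meridian (the standard ``untwisting'' trick) exhibits $M$ as the exterior of a link in $S^3$ all of whose components are unknotted. Such a link is a Seifert fibred link exterior in $S^3$. By the Burde–Murasugi/Seifert classification, these are unions of fibres of a Seifert fibration of $S^3$, which has at most two exceptional fibres, namely the cores of a genus one Heegaard splitting.

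Each boundary component of $B$ coming from a regular fibre removes no exceptional fibre. Removing an exceptional fibre turns it into a boundary component and decreases the exceptional count. I would then check which configurations remain Seifert fibred with incompressible, non-parallel boundary (as JSJ pieces require):
\begin{enumerate}
    \item Torus knot exteriors give a disc with two exceptional fibres.
    \item Cable spaces give an annulus with one exceptional fibre.
    \item Keychain link exteriors, i.e.\ regular fibres of the Hopf fibration, give circle bundles over planar surfaces.
\end{enumerate}
Removing both exceptional fibres plus more regular ones yields again a product $S^1 \times$ planar surface, once one re-chooses fibrations. This is the expected main obstacle: carefully verifying that, for instance, an annulus base with two exceptional fibres cannot occur. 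I would handle it by showing that such an $M$, after the filling above, would be a link exterior in $S^3$ whose Seifert fibration has three special fibres (two exceptional plus one core removed with nontrivial index), which is impossible.

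Finally, I would rule out degenerate cases: a disc with at most one exceptional fibre is a solid torus, which is not a JSJ piece, and an annulus with no exceptional fibre is $T^2 \times I$, which is also excluded. These are already subsumed in the third bullet of the list, completing the classification.
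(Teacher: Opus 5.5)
\textbf{There is a genuine gap in your key step.} The bound on exceptional fibres in terms of boundary components cannot follow from what you use. After the ``untwisting'' replacement, all you know is that $M$ is homeomorphic to a Seifert fibred link exterior in $S^3$.

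Such exteriors include the exterior of $n$ regular fibres of the $(p,q)$ Seifert fibration of $S^3$ with $|p|,|q|\ge 2$, that is, of the torus link $T(np,nq)$. That space is Seifert fibred over a disc with $n-1$ holes with \emph{two} exceptional fibres, and its boundary is incompressible. For $n=2$ it is exactly the ``annulus with two exceptional fibres'' you planned to rule out. For $n\ge 3$ it violates your claimed bound of zero exceptional fibres.

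Your intended contradiction (``three special fibres'') never arises, because the removed curves are regular fibres, not cores. So the bound is false for Seifert fibred submanifolds of $S^3$ in general; it is a feature of \emph{knot} exteriors. The replacement trick discards exactly the information that distinguishes them: all but one component of $\overline{S^3\setminus M}$ is a nontrivial knot exterior. Separately, your claim that the re-embedded link has all components unknotted is false. For a torus-knot piece, $N(K)$ is left untouched and its core is a torus knot. For a cable space, the re-embedded link has a torus-knot component.

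\textbf{The missing idea.} Exactly one component $W$ of $\overline{S^3\setminus M}$ is a solid torus, because every JSJ torus bounds a nontrivial companion exterior on the side away from $K$. Split on the fibre slope on $\partial W$.
\begin{itemize}
\item If the slope is not the meridian of $W$, the fibration extends over $M\cup W$.
  \begin{itemize}
  \item If $M$ has a single boundary torus, then $M\cup W=S^3$, which forces exactly two exceptional fibres: a torus knot exterior.
  \item Otherwise $M\cup W$ is an irreducible Seifert fibred space whose complementary pieces are all nontrivial knot exteriors. By Lemma \ref{lem:solidTorusSomewhere} it must be a solid torus, which forces an annulus base with one exceptional fibre: a cable space.
  \end{itemize}
\item If the slope is the meridian, then $M\cup W$ contains a punctured lens space $L(\alpha_i,\beta_i)$ for each exceptional fibre. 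Since $M\cup W$ lies in $S^3$, its $H_1$ is torsion-free, so there are no exceptional fibres and $M$ is a circle bundle over a planar surface.
\end{itemize}

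The paper gives no proof of this proposition and simply cites Budney's Theorem 4.18. The case analysis above is essentially the one the paper itself runs in the Claim inside Proposition \ref{lem:nonJSJConnSum}.
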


This result is not true for link exteriors, which is why we must specialise to the knot case.

We will also make use of Lemma 1 of \cite{budney2006}, which states

\begin{lemma}
\label{lem:solidTorusSomewhere}
    If $M$ is a connected compact irreducible submanifold of $S^3$ with non-empty boundary a union of tori, then either $M$ is a solid torus or some component of $\overline{S^3 \setminus M}$ is a solid torus.
\end{lemma}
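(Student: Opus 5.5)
The statement is Lemma~\ref{lem:solidTorusSomewhere}: if $M\subseteq S^3$ is compact, connected, irreducible, with $\partial M$ a nonempty union of tori, then either $M$ is a solid torus or some component of $\overline{S^3\setminus M}$ is. The plan is to pick one boundary torus and decide which side of it can be compressed.

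Let $T$ be any component of $\partial M$. By Alexander's theorem, every torus embedded in $S^3$ is compressible. So there is an embedded disc $D$ with $D\cap T=\partial D$ essential in $T$.

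\textbf{Step 1 (normalise the compressing disc).} The interior of $D$ may cross other components of $\partial M$. Make $D$ transverse to $\partial M$ and take a curve of $\mathrm{int}(D)\cap\partial M$ that is innermost on $D$; it bounds a subdisc $D_0$ with interior disjoint from $\partial M$.
\begin{itemize}
\item If $\partial D_0$ is essential in its torus $T'$, replace $T$ by $T'$ and $D$ by $D_0$. The number of intersection curves strictly drops.
\item If $\partial D_0$ is inessential in $T'$, it bounds a disc $E\subseteq T'$. When $D_0\subseteq M$, irreducibility of $M$ shows the sphere $D_0\cup E$ bounds a ball in $M$. When $D_0$ lies outside $M$, the sphere still bounds a ball in $S^3$ on the appropriate side. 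In either case an isotopy of $D$ removes the curve.
\end{itemize}
Iterating, we reach a boundary torus $T$ and a compressing disc $D$ whose interior misses $\partial M$. Then $D$ lies either in $M$ or in one component $N$ of $\overline{S^3\setminus M}$.

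\textbf{Step 2 (compress inside $M$).} Suppose $D\subseteq M$. Compressing $T$ along $D$ gives a sphere $S$ in $M$, which bounds a ball $B\subseteq M$ by irreducibility. Since $S$ is obtained from $T$ by surgery on $D$, the side of $S$ containing $D$ reassembles with a regular neighbourhood of $D$. Two cases arise:
\begin{itemize}
\item If $B$ lies on the side of $S$ containing $N(D)$, then $B\cup N(D)$ (or $B$ with a 1-handle attached) is a solid torus bounded by $T$ and contained in $M$. Because it is bounded by a boundary component of the connected manifold $M$, it equals $M$. So $M$ is a solid torus.
\item If $B$ lies on the other side, then $B$ contains everything on the far side of $T$ except $N(D)$, and $B$ must contain only part of $M$. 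But a ball in $M$ whose complement side contains $T$ forces $M$ to be contained in that complement. One checks this case also yields that $M$ is a solid torus, or that it is impossible.
\end{itemize}

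\textbf{Step 3 (compress outside $M$).} Suppose $D\subseteq N$. The component $N$ has $T$ as one boundary component and may have others. Since $M$ is connected and $S^3\setminus T$ has two sides, $N$ is exactly the closure of the side of $T$ away from $M$. That side is bounded only by $T$, so $\partial N=T$. Compressing $T$ along $D$ yields a sphere in $N$. It bounds a ball in $S^3$ on one side, and that ball cannot contain $M$, because $T$ would otherwise bound a solid torus containing $M$. Hence the ball lies in $N$, and $N$ is the ball union a 1-handle, i.e.\ a solid torus.

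\textbf{Main obstacle.} The delicate part is the side bookkeeping in Steps 1 and 2. In Step 1, when an inessential intersection curve bounds a sphere meeting other boundary tori, we must show the ball contains no other component of $\partial M$; irreducibility of $M$ handles this on the $M$ side. In Step 2, we must rule out the case where the ball lies away from $N(D)$; I would argue that $T$ would then lie inside a ball disjoint from the rest of $M$, which contradicts connectedness. Everything else follows the standard Alexander-theorem argument.
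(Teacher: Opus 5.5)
The paper does not prove this lemma itself; it quotes Lemma~1 of Budney's survey. Your overall strategy is the standard one: find a compressing disc for $\partial M$ in $S^3$ whose interior misses $\partial M$, then split into cases according to which side it lies on. Step~1 is fine in substance if you read the isotopy as an ambient isotopy moving $\partial M$.

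\textbf{The gap is in Step~2.} This is the only place irreducibility does real work, and your side bookkeeping there is inverted, with the decisive observation missing. Write $S^3\setminus T=X\sqcup Y$ with $\mathrm{int}(M)\subseteq Y$, so $X\cap M=\emptyset$. The neighbourhood $N(D)\subseteq M$ meets $T$ in an annulus $A$ that is \emph{not} part of the compressed sphere $S$, and across $A$ lies $X$. So the two sides of $S$ are (the closures of) $X\cup N(D)$ and $\overline{Y}\setminus N(D)$.
\begin{itemize}
\item Your first case ($B$ on the side containing $N(D)$) never occurs, since that side contains $X$. In that case $B\cup N(D)=B$ would be a ball anyway, not a solid torus.
\item The case you propose in your last paragraph to rule out, the ball lying away from $N(D)$, is exactly the one that occurs. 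Already for $M$ a standard solid torus and $D$ a meridian disc, $B$ is $M$ cut along $D$.
\end{itemize}
The argument you need is one line. Since $B\subseteq M$ cannot contain $X$, we have $B=\overline{Y}\setminus N(D)$. Hence $\overline{Y}=B\cup N(D)\subseteq M\subseteq \overline{Y}$, so $M$ is a ball with a $1$-handle attached, i.e.\ a solid torus (and $\partial M=T$). Your ``one checks\ldots'' does not supply this, and the connectedness argument you sketch would be trying to refute a case that actually happens.

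\textbf{Step~3 reaches the right conclusion via a false claim.} By Alexander's theorem the compressed sphere bounds balls on \emph{both} sides. Here the side $\overline{Y}\cup N(D)$ is a ball containing $M$: take $M$ a trefoil exterior and $D$ a meridian disc of the complementary solid torus. Nor is ``$T$ bounds something containing $M$'' a contradiction, since $\overline{Y}$ is simply the exterior of the core of $N$. No side selection is needed: the other side, $N\setminus N(D)$, is also a ball, so $N$ is a ball with a $1$-handle attached, a solid torus.

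With these two repairs the proof goes through.
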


\begin{proposition}
\label{lem:nonJSJConnSum}
    Let $T$ be a non-JSJ satellite torus for a satellite knot $K$. Then the associated companion knot $C$ (a longitude of $T$) is a connected sum of knots that appear as longitudes of distinct JSJ tori, and these JSJ tori are separated from $K$ by $T$.
\end{proposition}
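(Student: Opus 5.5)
Let $V$ be the solid torus bounded by $T$ (containing $K$) and $E=\overline{S^3\setminus V}$ the exterior of the companion $C$. The plan is to compare $T$ with the JSJ tori of $S^3\setminus N(K)$ and show that $T$ lies inside a Seifert fibred JSJ piece of composing-space type, from which the connected sum decomposition of $C$ can be read off.

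\textbf{Step 1: locate $T$.} Since $T$ is incompressible in the exterior of $K$, isotope it to minimise intersection with the union $\mathcal{J}$ of JSJ tori. Standard JSJ theory says an essential torus can be isotoped either to be parallel to a JSJ torus, or into a single Seifert fibred piece $M$ as a vertical torus. The first case is excluded because $T$ is not JSJ. So $T$ is a vertical torus in some Seifert fibred piece $M$, essential in $M$ and not boundary parallel. By Proposition~\ref{prop:SFStypes}, $M$ is either a disc with two exceptional fibres, an annulus with one exceptional fibre, or a circle bundle over a planar surface. A vertical essential torus corresponds to an essential simple closed curve in the base orbifold that is not boundary parallel. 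In the first two types no such curve exists: a disc with two cone points has no essential non-peripheral curve avoiding the cone points except ones bounding the two cone points, and those give compressible or boundary parallel tori. So $M$ is a circle bundle over a disc with holes $P$, with at least three boundary components, and $T$ is the preimage of a curve $c$ separating the boundary circles of $P$ into two groups, each with at least two circles. This classification of vertical tori is the step I expect to require the most care, especially ruling out the cone-point cases.

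\textbf{Step 2: identify the pieces.} In the knot exterior, $M$ is a composing space. Exactly one boundary torus of $M$ faces $K$, namely the outermost one, which is $\partial N(K)$ itself or a torus separating $K$ from the rest. The other boundary tori $J_1,\dots,J_r$ bound, on their side away from $M$, knot exteriors $E_j$ whose companions $C_j$ are the longitudes of the $J_j$. Lemma~\ref{lem:solidTorusSomewhere} applied to the complementary components gives exactly this: each $J_j$ bounds a solid torus on the side containing $M\cup N(K)$, and a knot exterior on the other side. The fibre of $M$ is then the meridian of $K$ (or of the relevant companion), and $M$ is the exterior of a connected sum. The curve $c$ splits $\partial P$ into a group containing the $K$-side boundary, and a group $G$ of at least two of the $J_j$ not containing it.

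\textbf{Step 3: read off $C$.} The side of $T$ not containing $K$ is $E$. It equals the preimage of the subsurface of $P$ cut off by $c$ containing the boundaries in $G$, union the $E_j$ for $j\in G$. This is a circle bundle over a planar surface glued to knot exteriors, with the fibre gluing to the meridians. That is the standard description of the exterior of $\#_{j\in G} C_j$, with $T$ playing the role of the outer boundary. Hence $C=\#_{j\in G}C_j$. Each $J_j$ with $j\in G$ is a distinct JSJ torus, lies in $E$, and is therefore separated from $K$ by $T$, as claimed.
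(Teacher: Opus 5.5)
Your route is the paper's: put $T$ as a vertical torus in a Seifert fibred piece $M$ over a disc with holes, then read $C$ off the fibration. The gap is in Step 2, at the sentence ``the fibre of $M$ is then the meridian''. Nothing before it implies it.

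Applying Lemma~\ref{lem:solidTorusSomewhere} to the pieces $E_j$ (together with incompressibility of $J_j$ in $S^3\setminus K$, which stops $E_j$ from being a solid torus) only shows that each $J_j$ bounds a solid torus $W_j\supseteq M$. It says nothing about which slope the Seifert fibre has on $J_j$, or on the torus $S$ facing $K$. You also never show that the component $U$ of $\overline{S^3\setminus M}$ containing $K$ is a solid torus, which you need before you can speak of its meridian. Step 3 (``with the fibre gluing to the meridians'') depends entirely on this slope claim. With any other fibre slope the longitude of $T$ would not be $\#_{j\in G}C_j$. Calling $M$ a ``composing space'' does not close the gap unless you import the full embedded form of Budney's classification, and then the proposition is being cited rather than proved. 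Proposition~\ref{prop:SFStypes} as used here gives only the abstract base orbifolds.

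The missing argument is the core of the paper's proof.
\begin{enumerate}
\item $M$ is irreducible and not a solid torus, so Lemma~\ref{lem:solidTorusSomewhere} gives a solid torus among the components of $\overline{S^3\setminus M}$. A component not containing $K$ cannot be one, since its boundary JSJ torus would then compress in $S^3\setminus K$. Hence $U$ is a solid torus and every other component is a nontrivial knot exterior.
\item Suppose the fibre on $\partial U$ is not a meridian of $U$. Then the Seifert fibration extends over $U$, so $M\cup U$ is Seifert fibred with nonempty boundary. It is therefore irreducible, and it is not a solid torus, since it has at least three boundary tori. Lemma~\ref{lem:solidTorusSomewhere} then forces some $E_j$ to be a solid torus, a contradiction. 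So the fibre on $\partial U$ is a meridian of $U$.
\item Take the annulus of fibres over an arc in $P$ from $\partial_j P$ to $\partial U$ and cap it with a meridian disc of $U$. This gives a disc in $W_j$ bounded by the fibre on $J_j$, so that fibre is the meridian of $C_j$. The same construction with an arc from $c$ to $\partial U$ shows the fibre on $T$ is a meridian of the solid torus $T$ bounds.
\end{enumerate}
Only after this does your Step 3 go through. Incidentally, Step 1, which you flagged as the delicate part, is the routine one.
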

\begin{proof}
    The torus $T$ can be isotoped to be disjoint from the JSJ tori, at which point it is contained in one of the Seifert fibred pieces (since the hyperbolic pieces contain no essential tori) and is not boundary-parallel in that piece (since it is not JSJ), nor is it compressible. If the base space of the Seifert fibred piece is a disc with two singular points or an annulus with one singular point, then any incompressible torus is boundary-parallel. So due to Proposition \ref{prop:SFStypes}, $T$ must be in a piece $M$ which is a circle bundle over a disc with holes (at least three holes, since otherwise all incompressible tori are boundary-parallel).

    \textbf{Claim:} Exactly one component of $\overline{S^3 \setminus M}$ is a solid torus, and the fibres of $M$ in the component of $\partial M$ incident to this solid torus component are meridian curves.

    \textbf{Proof of claim:} Since $M$ itself is not a solid torus, Lemma \ref{lem:solidTorusSomewhere} tells us that at least one of the components of $\overline{S^3 \setminus M}$ is a solid torus. To see that only one is, we observe that this solid torus has a JSJ torus as its boundary. If the solid torus does not contain $K$, then this JSJ torus is compressible in $S^3 \setminus K$, which is a contradiction. Thus any solid torus component of $\overline{S^3 \setminus M}$ must contain a component of $K$. Since $K$ is a knot, it has only one component, so there is exactly one solid torus component $V$ of $\overline{S^3 \setminus M}$.

    Suppose for a contradiction that the fibres of $M$ on $\partial V \subseteq \partial M$ are not meridian curves. Then we can extend the Seifert fibration of $M$ to a Seifert fibration of $M' = M \cup V$. Then $M'$ is a Seifert fibred space with boundary, so is irreducible (indeed there are only three reducible Seifert fibred spaces, and none of them have boundary, as shown in Proposition 1.13 of \cite{hatcher3M}). Also $M'$ is not a solid torus (it still has at least three boundary components), so Lemma \ref{lem:solidTorusSomewhere} tells us that some component of $\overline{S^3 \setminus M'}$ is a solid torus. But the components of $\overline{S^3 \setminus M'}$ are exactly the components of $\overline{S^3 \setminus M}$ excluding the only one which is a solid torus. This is a contradiction, so in fact the fibres of $M$ are meridian curves of $\partial V$, and the claim is proved. $\square$

    We denote by $S$ the component of $\partial M$ that is incident to the solid torus. When we imagine the disc with holes that $M$ is fibred over, we view $S$ as being the outermost boundary component. We can isotope $T$ to be vertical (a union of fibres) due to Proposition 1.12 of \cite{hatcher3M}, so it is a union of fibres over a circle. Then $T$ as well as every boundary component of $M$ has the property that fibres are meridian curves of the solid torus it bounds in $S^3$, since a meridian disc can be formed by taking the fibres over an arc connecting the relevant circle to $S$ together with a meridian disc of the solid torus on the other side of $S$. Thus a longitude for each torus is given by a horizontal circle lying in a single disc-with-holes slice of $M$.

    If one draws a diagram of the disc with holes together with the circle over which $T$ lies, as on the left of Figure \ref{fig:nonJSJ}, then after an ambient isotopy of this diagram in the plane we obtain a diagram as on the right of Figure \ref{fig:nonJSJ}. From this it can be seen that a longitude of $T$ is a connected sum of the longitudes of those components of $\partial M$ that $T$ separates from $S$ (and therefore from $K$, which lies beyond $S$). The relevant connected sum spheres can be obtained by taking the fibres over an arc meeting $C$ twice with both endpoints on $S$ together with two meridian discs of the solid torus $S$ bounds.

    \begin{figure}
        \centering
        \includegraphics[width=0.9\linewidth]{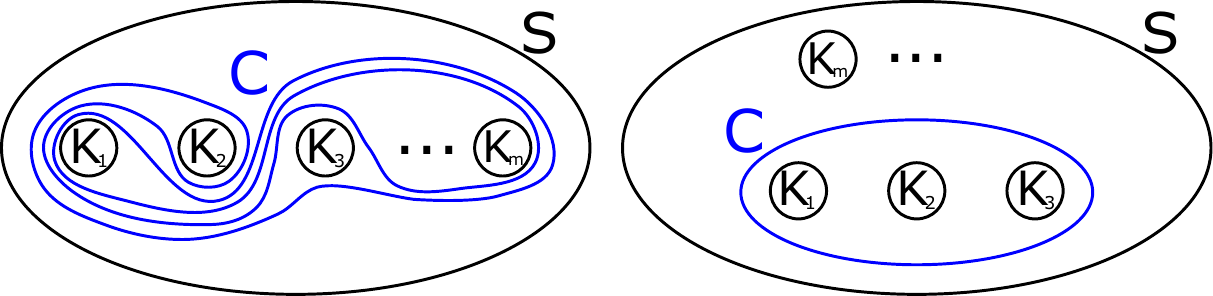}
        \caption{An example of performing an ambient isotopy of a diagram of the disc with holes. In this case we see that the knot $C$ appearing as a horizontal circle of the blue torus $T$ is a connected sum of $K_1$,$K_2$, and $K_3$, which are the longitudes of the tori that $T$ separates from $S$.}
        \label{fig:nonJSJ}
    \end{figure}

    Thus $C$ is a connected sum of knots that appear as longitudes of distinct JSJ tori which are separated from $K$ by $T$.
\end{proof}

Thus we can bound the crossing number of a non-JSJ companion knot in terms of the crossing numbers of the JSJ companion knots, as well as the number of JSJ tori in the exterior of the satellite knot.

\begin{lemma}
\label{lem:howManyJSJ}
    If $K$ is a knot, then the number of JSJ tori in its exterior is at most $24c(K)-1$.
\end{lemma}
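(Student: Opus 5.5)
The plan is to bound the number of JSJ tori by a normal surface count in a triangulation of the knot exterior whose size is linear in $c(K)$. The classical input is the Kneser--Haken finiteness theorem, in Jaco--Rubinstein style: in a compact irreducible 3-manifold triangulated with $t$ tetrahedra, any collection of pairwise disjoint, pairwise non-parallel, incompressible closed normal surfaces has at most a number of components linear in $t$. Each tetrahedron supports only boundedly many normal-disc types. After cutting along the surfaces, every complementary piece that is not a product $I$-bundle region must contain a tetrahedron piece that is not a parallelity region between parallel normal discs, and each tetrahedron can contain only a bounded number of such pieces. The JSJ tori are disjoint, incompressible and pairwise non-parallel in the irreducible exterior $S^3\setminus N(K)$. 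By Lemma \ref{lem:normalise} (with $A=K$), they can be simultaneously normalised in a triangulation of $S^3$ in which $K$ is simplicial.

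Step one is to build such a triangulation with a linear number of tetrahedra. I would use the grid/arc-presentation route already set up in Section \ref{sec:bindingWeight}, but that triangulation has $\alpha^2$ tetrahedra, which is quadratic. So instead I would triangulate directly from a minimal crossing diagram with $c=c(K)$ crossings. Place a small octahedron at each crossing, in the style of Thurston or Menasco's polyhedral decomposition, so that the exterior decomposes into a bounded number of pieces per crossing. Then subdivide each piece into a bounded number of tetrahedra, say at most $k$ per crossing, giving $t\le kc$. The diagram can be assumed connected and with $c\ge 1$; otherwise $K$ is the unknot and has no JSJ tori, and the bound is trivial.

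Step two is the counting. Cut $S^3\setminus N(K)$ along $N$ normal JSJ tori to get $N+1$ complementary pieces. I would argue that a piece consisting entirely of parallelity regions is a $T^2\times I$ between two tori. That is impossible, because JSJ tori are non-parallel and a torus is not parallel to $\partial N(K)$. Hence each piece contains at least one non-parallel tetrahedron piece. Each tetrahedron contributes at most a fixed number of such pieces: at most 5 complementary regions are not between two parallel normal discs of the same type, possibly 6 counting the square type. This gives $N+1\le ct'$ for a constant $c$, and therefore $N\le 24c(K)-1$ once the constants are tracked. I would choose the per-crossing tetrahedron count and the per-tetrahedron constant so that their product is $24$.

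The main obstacle is making the constant exactly $24$. This requires a concrete, economical triangulation with a small number of tetrahedra per crossing, combined with the sharp per-tetrahedron count of non-product regions. If the octahedral decomposition gives 4 tetrahedra per crossing and 6 non-product regions per tetrahedron, this yields $24c$. The ``$-1$'' comes from the $N+1$ pieces. I would try that combination first, and check carefully that the regions adjacent to $\partial N(K)$ and to $K$ are accounted for.
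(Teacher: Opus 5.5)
Your counting step is the same as the paper's: the union of the $n$ JSJ tori is normal and cuts the exterior into $n+1$ pieces. A piece built only from parallelity regions would be a product $T^2\times I$ between two parallel JSJ tori; a twisted $I$-bundle over a Klein bottle cannot occur in $S^3$. So each piece contains one of the at most six non-parallelity regions of some tetrahedron, and $n+1\le 6t$.

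The gap is in getting $t=4c(K)$, and there your setup is inconsistent. You normalise via Lemma \ref{lem:normalise} in a triangulation of $S^3$ in which $K$ is simplicial. Thurston's octahedral decomposition is not such a triangulation. Each octahedron has its top and bottom vertices on $K$, the knot is crushed to a single ideal vertex, and the remaining vertices sit at two extra points $\pm\infty$ off the projection plane. Splitting each octahedron into four tetrahedra about an axis gives $4c(K)$ tetrahedra. These form a \emph{partially ideal} triangulation of the exterior, with an ideal vertex at $K$ and material vertices at $\pm\infty$. This is what the paper uses. It cites Weeks, whose ideal triangulation has $4c(K)+4$ tetrahedra, the extra four only serving to make $\pm\infty$ ideal. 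It also cites the dual of Lackenby's handle structure with $4c(D)$ 0-handles.

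In that setting you should normalise the JSJ tori directly, by standard normal surface theory in the irreducible exterior; this is the argument of Lemma \ref{lem:normalise} with no $A$. The corner regions at the ideal vertex automatically make the piece adjacent to $\partial N(K)$ non-product. If you instead insist on $K$ lying in the 1-skeleton of a genuine triangulation of $S^3$, nothing in your plan gives four tetrahedra per crossing, and the constant $24$ is unsupported. Your guess of ``4 per crossing'' is correct, but only in the partially ideal framework.

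Also, the unknot case is not trivial. For $c(K)=0$ the bound reads $-1$, while there are $0$ JSJ tori, and the paper's argument needs at least one tetrahedron. So the statement implicitly assumes $c(K)\ge 1$. This is harmless, since the lemma is only applied to satellite knots.
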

\begin{proof}
    First, form a partially ideal triangulation of the knot exterior using $4c(K)$ tetrahedra. To see that this is possible, one can observe that in Section 3 of \cite{compositeKnots}, Lackenby uses a knot diagram $D$ to construct a handle structure for the knot exterior, having $4c(D)$ 0-handles, which is dual to a partially ideal triangulation of the knot exterior by $4c(D)$ tetrahedra. Alternatively, in Section 3 of \cite{Weeks2005}, Weeks describes how to build an ideal triangulation for a knot exterior using $4c(K)+4$ tetrahedra, and the final four tetrahedra are unnecessary if only a partially ideal triangulation is required.
    
    Suppose there are $n$ JSJ tori. Since the JSJ tori, taken together as a surface with many components, is incompressible, it can be put into normal form with respect to this partially ideal triangulation. Since any torus in $S^3$ is separating, this splits $S^3 \setminus K$ into $n+1$ pieces. Each piece is a union of pieces of tetrahedra cut along the normal discs. Since no two JSJ tori are parallel, no JSJ piece can consist solely of parallelity regions. Thus each JSJ piece contains at least one non-parallelity piece of a tetrahedron. But each tetrahedron has at most six such pieces, so $n+1$ is at most six times the number of tetrahedra.

    Thus $n < 24c(K)-1$.
\end{proof}

We can control the wrapping number associated with the relevant JSJ tori in terms of our non-JSJ torus.

\begin{lemma}
\label{lem:wrappingInequality}
    Let $K$ be a satellite knot, and $T_1$, $T_2$ be disjoint satellite tori in the exterior of $K$ such that $T_1$ and $K$ lie on opposite sides of $T_2$. Let $w_1$, $w_2$ be the wrapping numbers corresponding to $T_1$ and $T_2$ respectively. Then $w_1 \geq w_2$.
\end{lemma}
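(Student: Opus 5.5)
Let $V_1, V_2$ be the solid tori bounded by $T_1, T_2$, with $K \subseteq V_1$ and $K \subseteq V_2$. Since $T_1$ and $K$ lie on opposite sides of $T_2$, and $T_1$ is disjoint from $T_2$, we have $T_1 \subseteq S^3 \setminus V_2$. Because $K \subseteq V_1$, the torus $T_2$, which lies in the exterior of $K$ and is disjoint from $T_1$, must lie either inside $V_1$ or outside it. I will argue first that $V_2 \subseteq V_1$. If $T_2$ lay outside $V_1$, then $V_1$ would be a connected set disjoint from $T_2$ containing $K \subseteq V_2$, so $V_1 \subseteq V_2$. That would place $T_1$ inside $V_2$, contradicting that $T_1$ and $K$ are on opposite sides of $T_2$. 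Hence $V_2 \subseteq \operatorname{int} V_1$, and $K \subseteq V_2 \subseteq V_1$.

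The plan is now to take a meridian disc $D$ of $V_1$ realising the wrapping number, so that $|D \cap K| = w_1$, and to show that $D$ can be modified to meet $V_2$ in a collection of meridian discs of $V_2$ together with pieces that contribute no fewer points of $K$. Since $T_2$ is incompressible in $V_1 \setminus K$, standard innermost-disc arguments apply. First make $D$ transverse to $T_2$. Then remove every curve of $D \cap T_2$ that is inessential on $T_2$: take an innermost such curve on $T_2$, which bounds a disc $E \subseteq T_2$ disjoint from $K$, and swap the corresponding subdisc of $D$ for a copy of $E$ pushed slightly off $T_2$. This swap discards intersection points with $K$, which can only decrease $|D \cap K|$. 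If the count strictly decreased, we would either contradict minimality or, if the new disc became trivial, fall into the ball case handled below. So we may assume every curve of $D \cap T_2$ is essential on $T_2$.

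Each such curve bounds a subdisc of $D$, so it is an essential curve on $T_2$ that bounds a disc in $S^3$. Since $T_2$ is knotted (it is a satellite torus, hence not unknotted), such a curve must be a meridian of $V_2$. The innermost subdisc of $D$ bounded by it lies on one side of $T_2$, and it cannot lie outside $V_2$, because the exterior of a nontrivial knot has incompressible boundary. So it is a meridian disc of $V_2$, and by the definition of $w_2$ it meets $K$ in at least $w_2$ points. It therefore suffices to show that $D \cap T_2 \neq \emptyset$. If instead $D$ missed $T_2$, then $V_2$ would lie in $V_1$ cut along $D$, which is a ball, and so $K$ would lie in a ball inside $V_1$. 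This contradicts the standing assumption that the pattern is not contained in a ball in $V_1$.

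The main obstacle is the bookkeeping in the disc-swapping step, namely ensuring that the surgery never increases $|D \cap K|$ and never turns $D$ into a disc bounded by a trivial curve on $T_1$. The first point holds because $E \subseteq T_2$ is disjoint from $K$. For the second, the boundary $\partial D \subseteq T_1$ is untouched by the surgery, since $T_1$ is disjoint from $T_2$, so the result is still a meridian disc of $V_1$. Combining these, $w_1 = |D \cap K| \geq |D' \cap K| \geq w_2$, where $D'$ is any one of the meridian discs of $V_2$ found inside $D$.
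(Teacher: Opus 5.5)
Your proof is correct and is essentially the paper's argument: an innermost-disc argument showing that a meridian disc of $V_1$, once the curves of intersection that are inessential on $T_2$ are removed, contains a meridian disc of $V_2$ and so meets $K$ at least $w_2$ times. Your version is a little tighter than the paper's, because you fix a disc realising $w_1$ and check that each surgery (along curves innermost on $T_2$) cannot increase $|D\cap K|$, whereas the paper minimises the number of components of $D\cap T_2$ without explicitly tying that disc to $w_1$.
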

\begin{proof}
    Let $D$ be a meridian disc of $T_1$ in $S^3$ with the number of components of $D \cap T_2$ minimal. Since $T_1$ is a satellite torus, $D \cap K$ is non-empty, and then since $T_2$ separates $T_1$ from $K$, the collection of circles $D \cap T_2$ is non-empty. Let $\gamma$ be an innermost such circle in $D$, which bounds a disc $D' \subseteq D$ with $D' \cap T_2 = \gamma$. 
    
    If $D'$ is not a compressing disc for $T_2$, then $\gamma$ bounds a disc $D'' \subseteq T_2$. Then $D' \cup D''$ bounds a ball on both sides, and we can isotope $D'$ to $D''$ across one of these balls, then push $D''$ off $T_2$ to reduce the number of components of $D \cap T_2$.

    Since we assumed the number of components was minimal, $D'$ must be a compressing disc (a meridian disc) for $T_2$. Thus $K$ meets $D'$ in at least $w_2$ points. Since $D' \subseteq D$, $K$ meets $D$ in at least $w_2$ points. Therefore $w_1 \geq w_2$.
\end{proof}

Finally we are ready to prove Theorem \begin{NoHyper}\ref{thm:bound}\end{NoHyper}.

\begin{repeatThmBound}
    If $K$ is a satellite knot with companion knot $C$ and wrapping number $w$, then
    \[
        c(K) \geq \frac{2}{5} (w^2c(C))^{1/11}
    \]
    and
    \[
        \alpha(K) \geq \frac{4}{5} (w \alpha(C))^{1/7}.
    \]
\end{repeatThmBound}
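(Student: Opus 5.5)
The plan is to split into two cases according to whether the satellite torus $T$ (whose longitude is $C$) is a JSJ torus of the exterior of $K$. If it is, we apply Theorem \ref{thm:JSJbound} directly with $V$ a single solid torus. Its hypotheses hold: $K$ is a knot, so it is non-split, and $V$ is knotted because $T$ is a satellite torus. We obtain
\[
w^2c(C)\le \tfrac17(2c(K)+2)^{10} \quad\text{and}\quad w\alpha(C)\le \tfrac12\alpha(K)^5+2\alpha(K).
\]
These are stronger than what is needed, so this case is subsumed by the general estimate below (take $k=1$ there).

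If $T$ is not JSJ, we invoke Proposition \ref{lem:nonJSJConnSum}. It gives $C=C_1\#\cdots\#C_k$, where each $C_j$ is the longitude of a distinct JSJ torus $T_j$, and $T$ separates $T_j$ from $K$. Each $T_j$ is knotted and bounds a solid torus $V_j$ containing $K$, so Theorem \ref{thm:JSJbound} applies to $(K,T_j)$ with wrapping number $w_j$. By Lemma \ref{lem:wrappingInequality}, applied with $T_1=T$ and $T_2=T_j$, we have $w_j\le w$ in one direction. We need the opposite inequality, so I would check carefully which way round the separation hypothesis goes. Since $T_j$ lies on the far side of $T$ from $K$, the lemma should instead be applied as stated with the roles chosen so that $w_j\ge w$. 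That is, any meridian disc of $V_j$ must contain a meridian disc of $V$, which forces $K$ to meet it at least $w$ times. This is the direction the argument needs, and confirming it is the first thing I would do. Granting $w_j\ge w$, each $j$ gives
\[
w^2c(C_j)\le w_j^2c(C_j)\le \tfrac17(2c(K)+2)^{10} \quad\text{and}\quad w\alpha(C_j)\le \tfrac12\alpha(K)^5+2\alpha(K).
\]

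Next we sum over $j$, using subadditivity of both invariants under connected sum: $c(C)\le\sum c(C_j)$ by juxtaposing diagrams, and $\alpha(C)\le\sum\alpha(C_j)$ since arc index is additive minus $2$. The number $k$ of summands is at most the number of JSJ tori, which Lemma \ref{lem:howManyJSJ} bounds by $24c(K)-1$. For arc index we use $c(K)\le\alpha(K)^2/4$ to get $k\le 6\alpha(K)^2$. This yields
\[
w^2c(C)\le \tfrac{24c(K)}{7}(2c(K)+2)^{10} \quad\text{and}\quad w\alpha(C)\le 6\alpha(K)^2\left(\tfrac12\alpha(K)^5+2\alpha(K)\right)=3\alpha(K)^7+12\alpha(K)^3.
\]

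The final step, which I expect to be the only fiddly one, is converting these into the stated clean forms. For crossing number we need $\frac{24c}{7}(2c+2)^{10}\le(\frac{5c}{2})^{11}$. The leading constants compare as $\frac{24}{7}\cdot 2^{10}\approx 3511$ against $(5/2)^{11}\approx 23842$, so there is ample room for large $c$, and the $(1+1/c)^{10}$ factor is what must be controlled. I would use a lower bound on the crossing number of any satellite knot: composites have $c\ge 6$, and other satellites have larger crossing number. At $c=6$ we have $2c+2=\frac73 c$, and $\frac{24}{7}(\frac73)^{10}\approx 1.65\times10^4<2.38\times10^4$; the ratio only improves as $c$ grows. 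For arc index we need $3\alpha^7+12\alpha^3\le(\frac54)^7\alpha^7\approx 4.77\alpha^7$. This holds once $12\le 1.77\alpha^4$, i.e.\ for all $\alpha\ge 2$, and every nontrivial knot has $\alpha\ge 5$. Taking $11$th and $7$th roots then gives the two inequalities of the theorem.
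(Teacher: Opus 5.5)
Your proposal is correct and follows the paper's proof essentially step for step. It uses the JSJ case via Theorem \ref{thm:JSJbound}, and otherwise the connected-sum decomposition from Proposition \ref{lem:nonJSJConnSum}, the inequality $w_j\ge w$, subadditivity of $c$ and $\alpha$, the count $k\le 24c(K)-1$ from Lemma \ref{lem:howManyJSJ}, and the same numerics based on $c(K)\ge 6$ and $c(K)\le\alpha(K)^2/4$.

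The only blemish is your first attempted assignment $T_1=T$, $T_2=T_j$ in Lemma \ref{lem:wrappingInequality}, which does not satisfy the lemma's hypothesis because both $T$ and $K$ lie inside $V_j$. The lemma applies verbatim with $T_1=T_j$, $T_2=T$, since Proposition \ref{lem:nonJSJConnSum} says precisely that $T$ separates $T_j$ from $K$, so that hesitant paragraph can be replaced by this one line.
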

\begin{proof}
    Let $T$ be the satellite torus such that the corresponding companion knot and wrapping number are $C$ and $w$. In the case where $T$ is a JSJ torus, the desired inequality follows from Theorem \ref{thm:JSJbound}, in which a stronger inequality was proved.

    If $T$ is not a JSJ torus, then from Lemma \ref{lem:nonJSJConnSum} we know that there are distinct JSJ tori $T_1$, $T_2$, ..., $T_n$ that are separated from $K$ by $T$, with associated companion knots and wrapping numbers $C_1$, $C_2$, ..., $C_n$ and $w_1$, $w_2$, ..., $w_n$ such that $C = C_1 \sharp C_2 \sharp \ldots \sharp C_n$.

    Since the $T_i$ are distinct, Lemma \ref{lem:howManyJSJ} tells us that $n < 24c(k)-1$. Since each $T_i$ is separated from $K$ by $T$, Lemma \ref{lem:wrappingInequality} tells us that $w_i \geq w$ for each $i$. So we have

    \[
        w^2c(C) \leq \sum_{i=1}^n w^2c(C_i) \leq \sum_{i=1}^n w_i^2c(C_i).
    \]

    Since each $T_i$ is JSJ, Theorem \ref{thm:JSJbound} gives us 

    \[
        w_i^2c(C_i) \leq \frac{1}{7} (2c(K)+2)^{10}.
    \]

    Putting this all together,

    \[
        w^2c(C) \leq \sum_{i=1}^n \frac{1}{7} (2c(K)+2)^{10} \leq \frac{24c(K)-1}{7}(2c(K)+2)^{10}.
    \]

    Since every satellite knot has $c(K) \geq 6$, $2c(K) + 2 \leq \frac{7}{3}c(K)$, and we have $w^2c(C) \leq \frac{24}{7} (\frac{7}{3})^{10} c(K)^{11} \leq 16402 c(K)^{11}$. Thus we obtain our bound
    \[
        c(K) \geq \left(\frac{w^2 c(C)}{16402}\right)^{1/11} \geq \frac{2}{5} (w^2 c(C))^{1/11}.
    \]

    Similarly

    \[
        w \alpha(C) \leq \sum_{i=1}^n w\alpha(C_i) \leq \sum_{i=1}^n w_i \alpha(C_i).
    \]

    Theorem \ref{thm:JSJbound} gives us $w_ic(C_i) \leq \frac{1}{2} \alpha(K)^5+2\alpha(K)$, and from \cite{Cromwell_Nutt_1996} we know that $c(K) \leq \alpha(K)^2/4$, so

    \[
        w\alpha(C) \leq (24c(K)-1)\left(\frac{1}{2}\alpha(K)^5 + 2 \alpha(K)\right)
    \] \[
        \leq 6 \alpha(K)^2 \left(\frac{1}{2}\alpha(K)^5 + 2\alpha(K)\right) \leq 3\alpha(K)^7 + 12\alpha(K)^3 \leq 4\alpha(K)^7
    \]
    and so we have the other bound

    \[
        \alpha(K) \geq \left(\frac{w\alpha(K)}{4}\right)^{1/7} \geq \frac{4}{5} (w\alpha(K))^{1/7}.
    \]
\end{proof}

\section{Supplementary Results}
\label{sec:supplementals}

In this final section we make two interesting observations that follow quickly from our discussion so far.

\begin{theorem}
    If $L$ is a non-split link contained in a disjoint union of solid tori $V = \bigsqcup V_i$ such that every $T_i = \partial V_i$ is JSJ in the exterior of $L$ and no $V_i$ is unknotted, and $C$ is the corresponding companion link, then

    \[
        \alpha(C) \leq 2\alpha(L)^4
    \]
    and
    \[
        c(C) \leq (2c(L)+2)^8.
    \]
\end{theorem}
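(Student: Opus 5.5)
The plan is to avoid the expensive meridian-shrinking step of Section \ref{sec:noRepeatedArcs} altogether. That step is only needed to make the pattern $L$ sit nicely, whereas a bound on the companion only requires the tori themselves to be in narrow position. Since $L$ is irrelevant to $C$, we may discard it once we have used it to control the binding weight of $T$.

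First, apply Proposition \ref{prop:bindingBound}: isotope $(L,T)$ so that $L$ is an arc presentation of arc index $\alpha(L)$ and $T$ is PL-admissible with $w_\beta(T)\leq 2\alpha(L)^4$. Perturb $T$ to be smooth and admissible; this does not change the binding weight. Now forget $L$. With respect to the open book, $T$ is simply an admissible collection of knotted tori bounding disjoint solid tori in $S^3$.

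Second, apply only moves that do not increase binding weight. These are the moves of Section \ref{subsec:simplifyingMoves} and the easy move of Section \ref{subsec:emptySemis}, which now applies to every semicircle since $L$ is absent. We also need a version of the Section \ref{sec:noRepeatedArcs} isotopy that creates a length-2 meridian. With no link inside, that isotopy redirects nothing. It does, however, push $T_i$ across a neighbourhood of the disc $D$, and we must check that this does not raise the binding weight. The binding-circle points of $D$ lie on the segments met by the green arcs, and pushing across $D$ should remove the vertices of $\gamma$ other than the two on $\partial B$. I therefore expect $w_\beta$ not to increase.

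There is one catch. Lemma \ref{lem:noPolesOrClosedLeaves} and the moves use incompressibility of $T$ in $S^3\setminus L$ and non-splitness of $L$. Without $L$, the tori are compressible on their solid-torus side. I would handle this by keeping $L$ during these moves and using only moves that do not raise arc index, since the arc index of $L$ is irrelevant to the final bound anyway. Alternatively, one could replace $L$ by a core curve of each $V_i$ chosen disjoint from everything.

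Third, once every component has a repeated arc and a standard foliation, Proposition \ref{prop:toNarrowPosition} puts $T$ in narrow position without increasing binding weight. In narrow position each $T_i$ is the boundary of a neighbourhood of an arc presentation of the $i$th companion component, with exactly two vertices per shaded segment, i.e.\ per vertex of $L_C$. Hence $\alpha(C)\leq w_\beta/2\leq \alpha(L)^4\leq 2\alpha(L)^4$. The cheaper option is to do the meridian step with $L$ present but ignore the cost to $\alpha(L)$, since only $w_\beta$ matters. Here the main obstacle is verifying that no step raises $w_\beta$, specifically the disc-pushing step, and I would inspect Figure \ref{fig:redirection} to confirm that the isotopy of $T_i$ across $D$ only deletes intersections with $S^1_\phi$.

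Finally, the crossing bound follows from the arc index bound. By \cite{Cromwell_Nutt_1996}, $c(C)\leq \alpha(C)^2/4\leq \alpha(L)^8$. Using $\alpha(L)\leq 2c(L)+2$, valid since $L$ is non-split, this gives $c(C)\leq (2c(L)+2)^8$.
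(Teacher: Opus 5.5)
\textbf{Verdict: genuine gap.} It sits at the step you flagged yourself: the isotopy of $T_i$ across a neighbourhood of $D$ in Section \ref{subsec:shrinkDisc} raises the binding weight rather than preserving it.

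\textbf{Why the binding weight goes up.} The disc you push across is not $C$. It is the perturbation $D$, in which blue parts are pushed above $P$ and red parts below.
\begin{itemize}
\item Consequently $D$ crosses $P$ transversally along every green arc, and so meets $S^1_\phi$ transversally at every green point.
\item After the push, the new $T_i$ contains two parallel copies of $D$. Each copy picks up a vertex near every green point.
\item The only vertices destroyed are those on $\delta$. A local model near a vertex $x$ of $\delta$ shows that $x$ is in fact only displaced onto one copy of $D$, while two new vertices appear near the adjacent green point.
\item The green arc running parallel to $\delta$ already crosses the shaded segment leaving each vertex of $\delta$. So there are at least about as many green points as vertices on $\delta$, plus all the green points on the arcs around the holes.
\end{itemize}
The net change in $w_\beta$ is therefore positive, roughly twice the number of green points. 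Even with no holes, if $\gamma$ carries most of the vertices of $T_i$, the count rises to about $3w_\beta$. That would only give about $3\alpha(L)^4$, not $2\alpha(L)^4$, and with holes it is worse.

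The paper never claims control of $w_\beta$ at this step; it only tracks $\alpha(L)$. The later standardising moves only decrease $w_\beta$ from this larger value. So your ``keep $L$ and ignore its cost'' option does not yield $w_\beta^{\mathrm{final}}\le 2\alpha(L)^4$, and the inequality $\alpha(C)\le w_\beta/2$ with the \emph{initial} $w_\beta$ is unsupported.

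\textbf{The fallbacks do not repair this as stated.}
\begin{itemize}
\item Pushing across the unperturbed $C\subset P$ would lower $w_\beta$. But the perturbation exists precisely so that the pushed region avoids $L$.
\item If you delete $L$, you lose the incompressibility used in Lemma \ref{lem:noPolesOrClosedLeaves}: a closed leaf could now bound a meridian disc of $V_i$, and the tile and vertex-graph structure collapses.
\item Replacing $L$ by core curves reimposes the same avoidance constraint. Such a curve must pierce the meridian disc and be arc-presented without ending on segments inside $C$, and you have not shown this is possible.
\end{itemize}

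\textbf{What works instead (the paper's route).} The detour through narrow position is unnecessary; work directly in the position given by Proposition \ref{prop:bindingBound}.
\begin{enumerate}
\item By Lemma \ref{lem:twoPageMeridian}, each $T_i$ has a meridian lying in two pages. As a loop in the vertex graph of $\mathcal{F}$, it visits each vertex at most once.
\item Cut $T_i$ along this meridian. Take a shortest vertex-graph path between the two boundary copies, closing it up along the meridian if necessary. This gives a simple loop meeting the meridian once algebraically, i.e.\ a longitude, which visits each vertex at most once.
\item Choose distinct pages for its edges and push it slightly into $V_i$.
\end{enumerate}
The union of these longitudes is an arc presentation of $C$ with at most $w_\beta(T)\le 2\alpha(L)^4$ vertices. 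Your final crossing-number step then goes through as you wrote it.
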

\begin{proof}
    From Proposition \ref{prop:bindingBound} we may isotope $(L,T)$ into a position where $w_\beta(T) \leq 2\alpha(L)^4$. Due to lemma \ref{lem:twoPageMeridian}, we know that each component of $T$ has a meridian curve that lies in a pair of pages. In particular these meridian curves, thought of as loops in the vertex graph of $\mathcal{F}$, visit each vertex at most once. Thus on each component we can find a longitude (a loop meeting the meridian once algebraically) in the vertex graph that visits each vertex at most once.

    Then the union of all these longitudes has the link type of $C$ and is an arc presentation which meets the binding circle in at most $2\alpha(L)^4$ points. So $\alpha(C) \leq 2\alpha(L)^4$.

    The crossing number bound comes from this together with $c(C) \leq \alpha(C)^2/4$ and $\alpha(L) \leq 2c(L)+2$.
\end{proof}

Compared to the bound $c(C) \leq 10^{13}c(K)$ from \cite{satelliteBound}, and the arc index bound that can be obtained from it, the bounds we have just proved have a significantly worse growth rate. However, they may be of interest for the more reasonable coefficients, the applicability to some links in addition to knots, and the considerably different proof.

\begin{theorem}
    \label{thm:algorithm}
    Given a knot $K$, the following is an algorithm to decide whether $K$ is a satellite knot:
    \begin{enumerate}
        \item Choose a grid diagram $D$ of $K$. Call its arc index $\alpha$.
        \item Enumerate all grid diagrams that can be reached from $D$ by elementary moves without raising the arc index above $\alpha^5/2 + 2\alpha$ at any stage.
        \item Check whether each of these is a non-trivial satellite diagram (in which the companion is non-trivial, $K$ does not lie in a ball in the relevant solid torus, and is not a core curve of this solid torus). If one is, $K$ is a satellite. If none are, $K$ is not.
    \end{enumerate}
\end{theorem}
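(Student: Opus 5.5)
The algorithm is correct because it is sound and complete, and it terminates in finitely many steps. I would establish these three properties in that order, relying on Theorem \ref{thm:satelliteDiagram} for completeness.

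\emph{Termination.} For a fixed bound $N = \alpha^5/2+2\alpha$ there are only finitely many grid diagrams of arc index at most $N$, up to the trivial relabelling of coordinates. So the set reachable from $D$ by elementary moves that never exceed $N$ is finite, and a breadth-first search enumerates it. Deciding whether a given grid diagram is a satellite diagram is a finite combinatorial check. One looks for a grid diagram $D_C$ such that all corners of the diagram lie near corners of $D_C$, with parallel strands along its edges. One then checks three things. First, $D_C$ represents a non-trivial knot, which is decidable by unknot recognition. Second, the pattern is not contained in a ball in the solid torus. Third, the pattern is not a core curve. The last two are algorithmic, for example via normal surface theory in the solid torus exterior of the pattern, or by reducing to recognising whether the unwound annular diagram, closed up in a standardly embedded solid torus, is split from or isotopic to the core. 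I would cite standard decidability results here rather than reprove them.

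\emph{Soundness.} Suppose a non-trivial satellite diagram is found. Elementary moves preserve the knot type, so that diagram represents $K$. By definition it exhibits $K$ inside a regular neighbourhood $V$ of a non-trivial knot $C$, with $K$ neither a core nor contained in a ball of $V$. Hence $K$ is a satellite.

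\emph{Completeness.} This is the main step. Suppose $K$ is a satellite knot. Then $K$ is non-split, being a knot, and its exterior has a knotted JSJ torus $T$ bounding a solid torus $V$ containing $K$. That torus is incompressible, so $V$ is not unknotted and $K$ is neither a core nor in a ball. Theorem \ref{thm:satelliteDiagram} then gives an isotopy of $(K,T)$ after which $K$ is arc presented with arc index at most $N$ and its grid diagram is a satellite diagram following a core of $V$.

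The obstacle is that Theorem \ref{thm:satelliteDiagram} produces the final diagram, not a path of elementary moves from $D$ with bounded complexity. I would revisit its proof to extract such a path. Starting from $D$, the initial repositioning in Proposition \ref{prop:bindingBound} is an isotopy of the pair that fixes the arc presentation class of arc index $\alpha$. However, moving $K$ from $D$ to the particular simplicial presentation used there may require passing through other diagrams of arc index $\alpha$, and a priori this may need stabilisations.

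To handle this, I would argue that the whole construction can be run starting from the given diagram $D$ itself. That is, one fixes the presentation $D$ and minimises the binding weight and weight of $T$ over isotopies of $T$ alone, in the complement of this fixed arc presentation. One then re-checks that the bound $w_\beta \le 2\alpha^4$ survives, since the normal surface argument uses only the triangulation $\mathcal{T}$ adapted to $D$. Corollary \ref{cor:standard} would then be applied as a move of the link by exchange moves, which do not raise arc index. After that, every subsequent step consists of elementary moves: the simplifying moves, which do not increase arc index; the redirection of Proposition \ref{prop:redirect}, which is stabilisations and exchanges, monotonically reaching at most $N$; and the final time-reorderings, which are exchanges. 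So every intermediate diagram has arc index at most $N$.

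If the minimisation over presentations of arc index $\alpha$ is genuinely needed, I would instead note that the extra moves used there are exchange moves and generalised exchange moves from Proposition \ref{prop:makeStandard}, which keep arc index $\alpha \le N$. Either way, a satellite diagram is reachable within the bound, so step (2) finds it.
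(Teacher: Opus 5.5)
Your proposal is correct and takes essentially the same route as the paper: soundness is immediate, completeness comes from realising the construction behind Theorem \ref{thm:satelliteDiagram} as a sequence of elementary moves whose arc index never exceeds $\alpha^5/2+2\alpha$, and the satellite-diagram check reduces to standard decidability results (the paper uses Dynnikov's monotonic simplification for the unknot and splitness checks and Haraway's $T\times I$ recognition for the core check). Your treatment of starting from the given diagram $D$ is more careful than the paper's, which simply asserts that the process is realisable by such moves. You minimise only over configurations reachable from $D$ by arc-index-preserving moves, and note that the triangulation $\mathcal{T}$ is built from $D$'s own arc index $\alpha$.
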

\begin{proof}
    If $K$ is a satellite knot, it will have some knotted JSJ torus in its exterior. Then the discussion throughout the paper gives a procedure for transforming a diagram $D$ of $K$ with arc index $\alpha$ into a satellite diagram of $K$ with arc index at most $\alpha^5/2 + 2\alpha$, and at no point during this process are any destabilisations necessary. The whole process can be realised as a sequence of elementary moves that never raise the arc index beyond $\alpha^5/2 + 2\alpha$. Thus if $K$ is satellite, this diagram will appear in the enumeration.

    All that remains to be seen is that it is indeed possible to decide whether a particular grid diagram is a non-trivial satellite diagram. We can try to draw boxes around the corners of the diagram in a suitable way; there are only finitely many configurations to try. If a configuration looks promising, we need only check that the companion is non-trivial, that $K$ does not lie in a ball inside the torus, and that $K$ is not a core curve for the torus. There are various ways to check these, and at least in the case of the first two checks, there are even methods based on grid diagrams, suiting the theme of the algorithm.

    We have a grid diagram of the companion. We may enumerate the finitely many grid diagrams that can be reached from it by grid moves excluding stabilisation. By \cite{monotonicSimplification}, the standard unknot diagram will be absent if and only if the companion is non-trivial.

    If we introduce to our candidate diagram of $K$ an extra component that follows a meridian of the relevant torus, this 2-component link is split if and only if $K$ is contained in a ball in the torus. We may enumerate the finitely many grid diagrams that can be reached from this one by grid moves excluding stabilisation. Again by \cite{monotonicSimplification}, the link is split if and only if there is a split diagram in this list.

    Finally, note that it is decidable whether $K$ is a core curve of the solid torus. For instance because the solid torus with a deleted open neighbourhood of $K$ can be triangulated, and by work of Haraway in \cite{TxIisDecidable} there is an algorithm to determine whether this space is homeomorphic to $T \times I$.
\end{proof}

It is already known that satellite detection is decidable, for instance because the JSJ decomposition of a manifold can be computed using the techniques of Jaco and Tollefson described in \cite{algorithmicDecomposition}. In fact more is known; in \cite{NPwithGRH}, Baldwin and Sivek show that satellite recognition lies in NP, assuming the generalised Riemann Hypothesis. Subsequently in \cite{NPwithoutGRH}, Haraway and Hoffman were able to prove the same result unconditionally.

A common theme among these algorithms is having some way to decide whether a given torus is a satellite torus or not (and into this issue Theorem \ref{thm:algorithm} offers no new insight). Once this is known to be decidable, all that is needed to describe an algorithm for satellite knot recognition is some finite and algorithmically enumerable collection of candidate tori which is guaranteed to contain a satellite torus if one exists at all. Given such a list, a satellite recognition algorithm can simply check each of these tori in turn.

One such collection is the list of ``fundamental" normal surfaces in a triangulation of the knot exterior, as noted in \cite{NPwithoutGRH}. The main novel observation of Theorem \ref{thm:algorithm} is that an alternative such collection is given by those tori that can be nicely drawn on one of a finite (and algorithmically enumerable) list of grid diagrams.

\printbibliography

\end{document}